\documentclass[12pt]{amsart}
\usepackage{amsmath,times,epsfig,amssymb,mathtools,braket,amsbsy,amscd,amsfonts,amstext,color,bm,mathrsfs}
\usepackage[margin=1in]{geometry}
\usepackage{enumerate}
\usepackage{placeins}
\usepackage{array}
\usepackage{adjustbox}
\usepackage{tabulary}
\usepackage{multirow}
\usepackage{graphicx}
\usepackage{hyperref}
\usepackage{hhline}
\usepackage{dashbox}
\usepackage{soul}
\usepackage{multirow}
\usepackage[table, svgnames, dvipsnames]{xcolor}
\usepackage{makecell, cellspace, caption}
\usepackage{subcaption}
\usepackage{todonotes}
\usepackage[color,matrix,arrow]{xypic}
\usepackage{tikz}
        \usetikzlibrary{arrows.meta}
        \usetikzlibrary{arrows}
        \usetikzlibrary{quotes}
        \usetikzlibrary{graphs}
        \usetikzlibrary{positioning,calc}
\usepackage{tikz-cd}
\hypersetup{
  colorlinks   = true, %Colours links instead of ugly boxes
  urlcolor     = blue, %Colour for external hyperlinks
  linkcolor    = blue, %Colour of internal links
  citecolor   = red %Colour of citations
}

\usetikzlibrary{shapes}

\usetikzlibrary{decorations}
 
\tikzstyle{v} = [circle, draw, inner sep=2pt, minimum size=3pt, fill=black]
\tikzstyle{l} = [rectangle, draw, rounded corners]
\tikzstyle{c} = [circle, draw, inner sep=0pt, minimum size=14pt, text height=1.5ex, text depth=.25ex, font=\small]
\tikzstyle{label1} = [black, thick]
\tikzstyle{label2} = [blue, thick]
\tikzstyle{label3} = [teal, thick]
\tikzstyle{label4} = [orange, thick]
\tikzstyle{label5} = [magenta, thick]

\theoremstyle{plain}
\newtheorem{theorem}{Theorem}[section]
\newtheorem{corollary}[theorem]{Corollary}
\newtheorem{lemma}[theorem]{Lemma}

\newtheorem{proposition}[theorem]{Proposition}

\makeatletter
    
    \@addtoreset{equation}{section}
  \makeatother

\theoremstyle{definition}
\newtheorem{definition}[theorem]{Definition}

\newtheorem{def-prop}[theorem]{Definition \& Proposition}

\newtheorem{example}[theorem]{Example}

\newtheorem{remark}[theorem]{Remark}

\newcommand{\D}{{\mathcal{D}}}

\newcommand{\tbf}{\textbf}

\newcommand{\rev}{\operatorname{rev}}
\newcommand{\rich}{\operatorname{rich}}

\DeclareMathOperator{\rk}{rank}
\DeclareMathOperator{\id}{id}

\DeclareMathOperator{\SpMAT}{\mathscr{G}}
\DeclareMathOperator{\SpRV}{\mathscr{V}}
\DeclareMathOperator{\SpASPD}{\mathscr{D}}
\DeclareMathOperator{\SpSplit}{\mathsf{Sp}}

\newcommand{\symdiff}{\mathbin{\triangle}}

\newcolumntype{K}[1]{>{\centering\arraybackslash}p{#1}}

\allowdisplaybreaks

\makeatletter
\@namedef{subjclassname@2020}{%
  \textup{2020} Mathematics Subject Classification}
\makeatother

\begin{document}

\title[Splitting and merging of combinatorial species]{An axiomatic framework from splitting and merging in MAT-labeled graphs, vines, and single-peaked domains}

\begin{abstract}
In recent work (Forum Math.~Sigma, 2024), we established a correspondence between MAT-labeled graphs arising from hyperplane arrangements and regular vines from probability theory. 
In this paper, we extend this connection to Arrow's single-peaked domains in social choice theory. 
We show that MAT-labeled complete graphs, regular vines, and maximal Arrow's single-peaked domains arise from the same recursive combinatorial structure.

Our main result gives an axiomatic characterization of these objects using the language of combinatorial species. 
At the heart of this characterization are two fundamental operations, called splitting and merging, together with natural compatibility conditions that uniquely determine the structures. 
As consequences, we obtain explicit correspondences between maximal Arrow's single-peaked domains, MAT-labeled complete graphs, and regular vines, thereby providing new combinatorial and axiomatic characterizations of these domains.

 We further show that regular vines are equivalent to $(n,3)$-extremal lattices from formal concept analysis, and that these lattices are in turn equivalent to extremal binary matrices with no triangles from combinatorial matrix theory. Consequently, these lattices and matrices also fit naturally into the same splitting-and-merging framework, providing further examples from different areas unified by our axiomatic characterization.
\end{abstract}

 \author{Hung Manh Tran}
\address{Hung Manh Tran, Faculty of Fundamental Sciences, Phenikaa University, Hanoi 12116, Vietnam.}
\email{hung.tranmanh@phenikaa-uni.edu.vn}

\author{Tan Nhat Tran}
\address{Tan Nhat Tran, Department of Mathematics and Statistics, Binghamton University (SUNY), Binghamton, NY 13902-6000, USA.}
\email{tnhattran@binghamton.edu}

\author{Shuhei Tsujie}
\address{Shuhei Tsujie, Department of Mathematics, Hokkaido University of Education, Asahikawa, Hokkaido 070-8621, Japan}
\email{tsujie.shuhei@a.hokkyodai.ac.jp}

\subjclass[2020]{Primary 06A07, 05C78, Secondary 52C35}
\keywords{MAT-labeled graph, hyperplane arrangement, regular vine, probability, single-peaked domain, social choice, extremal lattice, formal concept analysis, binary matrix, combinatorial matrix theory}

\date{\today}
\maketitle

\tableofcontents

%********************************************************************************************************
\section{Introduction}
\label{sec:intro}

%******************************************************************************** 
\subsection{Background}
\label{subsec:BG}

The motivation of this work comes from an unexpected interaction between several combinatorial structures arising in different areas of mathematics. 
In our previous work \cite{TTT24}, we established a correspondence between \emph{MAT-labeled graphs}, which originate in the theory of hyperplane arrangements, and \emph{locally regular vines}, which appear in probability theory. 
In the present paper we show that this connection extends further and naturally relates to \emph{single-peaked domains} in social choice theory.

\medskip

The first main concept in this paper is the notion of MAT-labeled graphs.

 \begin{definition}[MAT-labeled graphs {\cite{TT23,TTT24}}]
\label{definition MAT-labeling}
Let $G=(A,E)$ be a finite simple graph with vertex set $A$ and edge set $E$. 
Let $ \lambda \colon E \longrightarrow \mathbb{Z}_{>0} $ be a map.
For $k>0$, let $\pi_k$, $\pi_{\le k}$, and $\pi_{<k}$ denote the sets of edges with label exactly $k$, at most $k$, and less than $k$, respectively.
A map $ \lambda \colon E \longrightarrow \mathbb{Z}_{>0} $ is called an \textbf{MAT-labeling} of $G$ if the following conditions hold for every $ k >0$. 
\begin{enumerate}[(1)]
\item\label{definition MAT-labeling cycle} 
Edges in $\pi_{k}$ do not form a cycle with an edge in $\pi_{\leq k}$. 
\item\label{definition MAT-labeling triangle} 
Every edge in $\pi_k$ forms exactly $k-1$ triangles with edges in $\pi_{<k}$.
\end{enumerate}

An edge-labeled graph $(G,\lambda)$ is an \tbf{MAT-labeled graph} if $\lambda$ is an MAT-labeling of $G$. 
If $G$ is a complete graph, we refer to $(G,\lambda)$ as an MAT-labeled complete graph.
\end{definition}

\begin{example}
The figure below illustrates an MAT-labeled complete graph on the vertex set $A = \{a,b,c,d\}$.
\begin{center}
\begin{tikzpicture}[baseline=(current bounding box.center)]
\draw (135: 12mm) node[c](0){$a$};
\draw (225: 12mm) node[c](1){$b$};
\draw (315: 12mm) node[c](2){$c$};
\draw (405: 12mm) node[c](3){$d$};
\draw[label1] (0) --node[midway, fill=white, inner sep=2pt] {\scalebox{.7}{1}} (1);
\draw[label2] (0) --node[pos=0.75, fill=white, inner sep=2pt] {\scalebox{.7}{2}} (2);
\draw[label3] (0) --node[midway, fill=white, inner sep=2pt] {\scalebox{.7}{3}} (3);
\draw[label1] (1) --node[midway, fill=white, inner sep=2pt] {\scalebox{.7}{1}} (2);
\draw[label1] (1) --node[pos=0.25, fill=white, inner sep=2pt] {\scalebox{.7}{1}} (3);
\draw[label2] (2) --node[midway, fill=white, inner sep=2pt] {\scalebox{.7}{2}} (3);
\end{tikzpicture}
\end{center}
\end{example}
\vskip .5em
 
The study of MAT-labelings originates from a question of Cuntz--M{\"u}cksch \cite{CM20} concerning \emph{free hyperplane arrangements}. 
A hyperplane arrangement is a finite collection of linear hyperplanes in a vector space. 
Such an arrangement is called \emph{free} if its module of logarithmic derivations is free \cite{T80,OT92}. 
Freeness has been a central topic in the theory of hyperplane arrangements for several decades, and a major direction in the area is to understand this algebraic property through combinatorial structures associated with the arrangement.

One important notion in this direction is \emph{MAT-freeness}, introduced by Abe--Barakat--Cuntz--Hoge--Terao \cite{ABCHT16}. 
An arrangement is called MAT-free if its hyperplanes admit an \emph{MAT-partition}, namely a partition satisfying certain combinatorial conditions. 
MAT-freeness implies freeness and played a crucial role in the proof of the Sommers--Tymoczko conjecture \cite{ST06} on the freeness of ideal subarrangements of Weyl arrangements.

Intuitively, an MAT-partition organizes the hyperplanes into layers. 
On the other hand, the motivating examples coming from root systems also carry a natural partial order given by the root poset. 
This leads naturally to the question of whether MAT-partitions can be described through a suitable poset structure extending the classical root poset. 
Such a question was posed by Cuntz--M{\"u}cksch \cite[Problem~47]{CM20} and motivated our earlier work \cite{TTT24}, where we answered this question for graphic arrangements.

Subarrangements of a type~$A$ Weyl arrangement are in one-to-one correspondence with graphic arrangements, and their MAT-partitions correspond precisely to MAT-labelings of the underlying graphs. 
It is well known that a graphic arrangement is free if and only if the underlying graph is chordal (e.g., \cite[Theorem~3.3]{ER94}). 
Consequently, every graph admitting an MAT-labeling must be chordal. 
Moreover, it was shown in \cite{TT23} that a graph admits an MAT-labeling if and only if it is strongly chordal. 
Thus, although MAT-freeness originated in the theory of hyperplane arrangements, MAT-labelings also arise naturally as a graph-theoretic property.

\medskip

The second structure appearing in this story is that of  regular vines. 
\begin{definition}[Regular vines \cite{BC02}]\label{def:regular vine; graph}
A sequence $\mathcal{V} = (T_{1}, \dots, T_{n})$ is a \textbf{regular vine} on a finite set $A$ with $n$ elements if the following conditions hold. 
\begin{enumerate}[(1)]
\item $T_{1}$ is a tree on $A$. 
\item $T_{i}$ is a spanning tree of the line graph of $T_{i-1}$ for each $i \in \{2,\dots, n\}$. 
\end{enumerate}
\end{definition}

The nested incidence relations among the vertices and edges of a regular vine naturally define a poset structure. 
This leads to an equivalent poset description of regular vines, which will be used throughout the paper.

\begin{definition}[Poset definition of regular vines {\cite[Definition 3.14 and Proposition 4.8]{TTT24}}]\label{def:regular vine; poset}
An induced subposet $\mathcal{V}$ of the Boolean lattice $(2^{A}, \subseteq)$ is called a \textbf{regular vine} on an $n$-element set $A$ if the following conditions hold: 
\begin{enumerate}[(1)]
\item All maximal chains have length $n-1$. 
Hence, $\mathcal{V}$ is graded. 
We assume that every minimal element of $\mathcal{V}$ has rank $1$. 
\item The number of minimal elements of $\mathcal{V}$ equals $n$. 
Consequently, the minimal elements of $\mathcal{V}$ are the singletons $\{a\}$ for all $a \in A$, and the maximal element of $\mathcal{V}$ is $A$. 
Let $\mathcal{V}(i)$ denote the set of elements of $\mathcal{V}$ with rank (equivalently, cardinality) $i$.
\item Every non-minimal element covers exactly two elements. 
\item For each $1 \leq i \leq n-1$, the graph on $\mathcal{V}(i)$ obtained by viewing each element of $\mathcal{V}(i+1)$ as an edge joining the two elements it covers is a tree, called the \textbf{$i$-th associated tree}. 
\item (\textbf{Proximity}) If two elements in $\mathcal{V}(i)$ with $i \geq 2$ are covered by a common element, then they cover a common element. 
\end{enumerate}
An ideal (i.e.,~ a downward-closed subset) of a regular vine is called a \textbf{locally regular vine}, or equivalently, an \textbf{m-saturated vine}; 
see \cite[Theorem 6.13]{TTT24} and \cite[Definition 4.2]{KC06}.
\end{definition}

\begin{example}
The left-hand figure below illustrates a regular vine on $A = \{a,b,c,d\}$ in the sense of Definition \ref{def:regular vine; graph}. 
By declaring each edge in a tree of the regular vine to cover its endpoints, we obtain the poset shown in the right-hand figure.

\begin{center}
\begin{tikzpicture}[baseline=0]
\draw (0,0) node[c](a){$a$};
\draw (1,0) node[c](b){$b$};
\draw (2,0) node[c](c){$c$};
\draw (3,0) node[c](d){$d$};
\draw (a) .. controls +(0,1) and +(0,1) .. coordinate[pos=0.5] (ab) (b);
\draw (b) .. controls +(0,1) and +(0,1) .. coordinate[pos=0.5] (bc) (c);
\draw (b) .. controls +(0,0.5) and +(0,1.35) .. coordinate[pos=0.68] (bd) (d);
\draw (ab) .. controls +(0,1) and +(0,1) .. coordinate[pos=0.5](abc) (bc);
\draw (bc) .. controls +(0,1) and +(0,1) .. coordinate[pos=0.5](bcd) (bd);
\draw (abc) .. controls +(0,1) and +(0,1) .. (bcd);
\end{tikzpicture}
\hspace{20mm}
\begin{tikzpicture}[baseline=0]
\draw (0, 0) node[c](0){$a$};
\draw (1, 0) node[c](1){$b$};
\draw (2, 0) node[c](2){$c$};
\draw (3, 0) node[c](3){$d$};
\draw (0.5, 0.865) node[v](01){};
\draw (0)--(01);
\draw (1)--(01);
\draw (1.5, 0.865) node[v](12){};
\draw (1)--(12);
\draw (2)--(12);
\draw (2.5, 0.865) node[v](13){};
\draw (1)--(13);
\draw (3)--(13);
\draw (1.0, 1.730) node[v](012){};
\draw (01)--(012);
\draw (12)--(012);
\draw (2.0, 1.730) node[v](123){};
\draw (12)--(123);
\draw (13)--(123);
\draw (1.5, 2.595) node[v](0123){};
\draw (012)--(0123);
\draw (123)--(0123);
\end{tikzpicture}
\end{center}
\vskip 1em
Identifying each element of this poset with the subset of $A$ that it dominates, we may realize it as an induced subposet of the Boolean lattice $(2^{A}, \subseteq)$. 
Conversely, by taking the associated trees of a regular vine in the sense of Definition \ref{def:regular vine; poset}, we recover the regular vine in the original sense. 
\end{example}

In \cite{TTT24} we proved that the categories of MAT-labeled graphs and locally regular vines are equivalent.
In particular, MAT-labeled complete graphs correspond exactly to regular vines. 
This result provides a positive answer to the question of Cuntz--M{\"u}cksch in the special case of graphic arrangements.

Regular vines were originally introduced in probability theory as models for describing  dependence structures in multivariate distributions. 
They have since become an important tool in probability, uncertainty analysis, and related fields. 
For a comprehensive account of vine models and their applications, we refer the reader to \cite{KJ11}.

A key observation behind the correspondence established in \cite{TTT24} was that the numbers of non-isomorphic MAT-labeled complete graphs with $n$ vertices and of non-isomorphic regular vines on $n$ elements coincide for $n\le 8$.
Such numerical coincidences frequently hint at deeper structural relationships, and in this case it ultimately led to the equivalence between the two objects.

\medskip

 In the present work, the same sequence also arises in the enumeration of non-isomorphic \emph{maximal Arrow's single-peaked domains} \cite{Karpov25}. 
This suggests that these structures may admit a common combinatorial framework.

We now recall the notion of Arrow's single-peaked domains.
Let $A$ be a finite set with $n$ elements (the alternatives) and let $\mathcal{L}(A)$ denote the set of linear orders (the preferences) on $A$.
For $\mathcal{D} \subseteq \mathcal{L}(A)$ and $T \subseteq A$, let $\mathcal{D}_{T}$ denote the set of linear orders on $T$ obtained by restricting the linear orders in $\mathcal{D}$ to $T$. 

\begin{definition}[Arrow's single-peaked domains {\cite{A63,I64}}]
A subset $\mathcal{D} \subseteq \mathcal{L}(A)$ is called an \textbf{Arrow's single-peaked domain (ASPD)} if for every triple $T\subseteq A$ there exists $x\in T$ such that $x$ is never ranked last in the restriction $\mathcal{D}_T$. 
An ASPD is \textbf{maximal} if adding any additional preference destroys this property.
\end{definition}

\begin{example}
The table below illustrates an example of a maximal ASPD on $A = \{a,b,c,d\}$. 
Each column represents a linear order, written from top to bottom.
\begin{align*}
\begin{array}{|cccc|cccc|}
\hline
a & b  & b & c & b &  c & b & d \\
b & a & c & b & c &  b & d & b \\
c & c & a & a & d &  d & c & c \\
d & d & d & d & a & a & a & a \\
\hline
\end{array}
\end{align*}
\end{example}
\vskip .5em

Given a collection of voters, each with a preference on $A$, the \emph{majority relation} declares that $x$ is socially preferred to $y$ whenever a strict majority of voters rank $x$ above $y$. 
When the number of voters is odd, this relation is well defined but may fail to be transitive, a phenomenon known as the \emph{Condorcet paradox}.

A domain $\mathcal D \subseteq \mathcal L(A)$ is called a \textbf{Condorcet domain} if for every odd preference profile whose preferences lie in $\mathcal D$, the induced majority relation is always a linear order, thereby avoiding the Condorcet paradox. 
In this case the majority rule always admits a \emph{Condorcet winner}, an alternative that defeats every other alternative in pairwise majority comparisons.

Condorcet domains are classical objects in social choice theory, tracing back to the work of Borda and Condorcet in the late 18th century. 
Understanding their structure is a major problem in the theory and remains largely open. 
Among the known examples, ASPDs form one of the largest and most tractable families, and maximal ASPDs provide important extremal instances within this class.

\medskip

Several combinatorial frameworks for maximal ASPDs have been proposed in the literature. 
For instance, Liversidge \cite{Liver20} described them using Hamiltonian directed paths, Karpov--Slinko \cite{KS23} introduced the concatenation and shuffle construction, and Slinko \cite{Slinko24} developed an approach based on generalized arrangements of pseudolines.
Recently, Karpov \cite{Karpov25} gave a characterization and enumeration of maximal ASPDs via binary matrices.

%******************************************************************************** 
\subsection{Main results}
\label{subsec:main-result}

This paper uncovers a common structure underlying three combinatorial objects that arise in different areas: maximal ASPDs, MAT-labeled complete graphs, and regular vines. 
Despite their different definitions and origins in social choice theory, hyperplane arrangements, and probability theory, we show that these objects admit the same recursive description.

Our approach is to view each of these structures as a combinatorial species and to identify two fundamental operations, called \emph{splitting} and \emph{merging}, that govern their recursive construction. 
The splitting operation leads to a natural \emph{proximity} condition, while merging gives rise to a corresponding \emph{mergeability} condition (Definition \ref{def:prox-merge}). 
We prove that these two axioms uniquely determine the structure, yielding an axiomatic characterization that applies uniformly to all three families. 
This is the central result of the paper and is stated in Theorem~\ref{thm:AxCh}.

The framework has two main advantages.  
First, it explains why these objects share similar structural and enumerative properties.  
Second, it provides a systematic viewpoint for relating and analyzing further examples arising in different areas.  
In particular, we will see in the applications that extremal lattices from formal concept analysis and extremal binary matrices from combinatorial matrix theory also fit naturally into this framework.

As a consequence of Theorem~\ref{thm:AxCh}, isomorphisms between these structures can be constructed inductively from the splitting and merging operations. 
In Theorems~\ref{thm:graph-ASPD} and~\ref{thm:vine-ASPD} we give explicit constructions of these isomorphisms, relating maximal ASPDs to MAT-labeled complete graphs and to regular vines, respectively. 
These constructions make the correspondences transparent and allow structural properties to be translated between the different settings.

\subsection{Applications}
\label{subsec:appl}

Our results lead to two main lines of applications.

The first concerns social choice theory. 
The axiomatic characterization of maximal ASPDs, together with the explicit correspondences with MAT-labeled complete graphs and regular vines, provides two concrete combinatorial models for maximal ASPDs---one graph-theoretic and one poset-theoretic. 
The representation via regular vines also yields an explicit formula for the number of non-isomorphic maximal ASPDs (Corollary~\ref{cor:key-seq}).

The correspondence with regular vines further yields several applications in social choice theory, which are developed in Section \ref{sec:appl-sct}. 
First, maximal Black's single-peaked domains correspond precisely to D-vines. 
Second, the distribution of first-ranked alternatives in a maximal ASPD can be computed via maximal chains of the corresponding regular vine, leading to a rule analogous to Pascal's triangle. 
Third, we obtain a characterization of the richness property in terms of the combinatorial structure of regular vines.

The second line of applications illustrates the broader scope of our axiomatic framework.
Using a direct combinatorial proof (Theorem~\ref{thm:EL-vine}), we show that regular vines are equivalent to $(n,3)$-extremal lattices arising in formal concept analysis, a field that studies data through object--attribute relationships.
We further show (Proposition~\ref{prop:EL-EM}) that these extremal lattices are equivalent to extremal binary matrices with no triangles from combinatorial matrix theory.
This equivalence recovers the characterization of maximal ASPDs via binary matrices recently obtained by Karpov~\cite{Karpov25}.
Consequently, both the extremal lattices and the extremal binary matrices admit the same splitting-and-merging structure and therefore fit naturally into our framework.

Furthermore, a recursive formula for the number of non-isomorphic $(n,3)$-extremal lattices is already known. 
Via the correspondence established here, this immediately yields a recursive formula for the number of non-isomorphic regular vines, MAT-labeled complete graphs, and maximal ASPDs (Corollary~\ref{cor:recursive}).

\medskip

We summarize the relationships between the main structures and the resulting applications in Figure \ref{fig:summary}.

\newsavebox{\boxMAT}
\savebox{\boxMAT}{
\begin{tikzpicture}[scale=0.9, transform shape]
\draw (135: 10mm) node[c](0){$a$};
\draw (225: 10mm) node[c](1){$b$};
\draw (315: 10mm) node[c](2){$c$};
\draw (405: 10mm) node[c](3){$d$};
\draw[label1] (0) --node[midway, fill=white, inner sep=2pt] {\scalebox{.7}{1}} (1);
\draw[label2] (0) --node[pos=0.75, fill=white, inner sep=2pt] {\scalebox{.7}{2}} (2);
\draw[label3] (0) --node[midway, fill=white, inner sep=2pt] {\scalebox{.7}{3}} (3);
\draw[label1] (1) --node[midway, fill=white, inner sep=2pt] {\scalebox{.7}{1}} (2);
\draw[label1] (1) --node[pos=0.25, fill=white, inner sep=2pt] {\scalebox{.7}{1}} (3);
\draw[label2] (2) --node[midway, fill=white, inner sep=2pt] {\scalebox{.7}{2}} (3);
\end{tikzpicture}
}

\newsavebox{\boxRV}
\savebox{\boxRV}{
\begin{tikzpicture}[scale=0.8, transform shape]
\draw (0, 0) node[c](0){$a$};
\draw (1, 0) node[c](1){$b$};
\draw (2, 0) node[c](2){$c$};
\draw (3, 0) node[c](3){$d$};
\draw (0.5, 0.865) node[v](01){};
\draw (0)--(01);
\draw (1)--(01);
\draw (1.5, 0.865) node[v](12){};
\draw (1)--(12);
\draw (2)--(12);
\draw (2.5, 0.865) node[v](13){};
\draw (1)--(13);
\draw (3)--(13);
\draw (1.0, 1.730) node[v](012){};
\draw (01)--(012);
\draw (12)--(012);
\draw (2.0, 1.730) node[v](123){};
\draw (12)--(123);
\draw (13)--(123);
\draw (1.5, 2.595) node[v](0123){};
\draw (012)--(0123);
\draw (123)--(0123);
\end{tikzpicture}
}

\newsavebox{\boxASPD}
\savebox{\boxASPD}{
\begin{tikzpicture}[scale=0.75, transform shape]
\draw (0,0) node{$\begin{array}{|cccc|cccc|}
\hline
a & b  & b & c & b &  c & b & d \\
b & a & c & b & c &  b & d & b \\
c & c & a & a & d &  d & c & c \\
d & d & d & d & a & a & a & a \\
\hline
\end{array}$};
\end{tikzpicture}
}

\begin{figure}[h]
\centering
\begin{tikzpicture}
\draw[fill=gray!10] (0,-1) rectangle (15,12);

\draw (7.5,12) node[draw, fill=white, rounded corners = 2pt, inner sep = 2mm]{\textbf{Species admitting splitting and merging structure} (Thm.~\ref{thm:AxCh})};

\draw (4,9) node[draw, fill=white, rectangle, rounded corners = 2mm, align=center, inner sep = 3mm](G){\textbf{MAT-labeled complete graphs} \\ \begin{small}
(hyperplane arrangements)
\end{small} \\[3mm]
\usebox{\boxMAT}}; 

\draw (11.5,9) node[draw, fill=white, rectangle, rounded corners = 2mm, align=center, inner sep = 3mm](D){\textbf{Maximal ASPDs} \\ \begin{small}
(social choice theory)
\end{small} \\[3mm] 
\usebox{\boxASPD}}; 

\draw (7.5,3) node[draw, fill=white, rectangle, rounded corners = 2mm, align=center, inner sep = 3mm](V){
\begin{minipage}[]{50mm}
\centering
\textbf{Regular vines} \\
\begin{small}
(probability theory)
\end{small}

\vspace{2mm}
{
{\Large $\Updownarrow$}  
{\small Thm.~\ref{thm:EL-vine}}
}
\vspace{2mm}

\textbf{$(n,3)$-extremal lattices} \\
\begin{small}
(formal concept analysis)
\end{small}

\vspace{2mm}
{
{\Large $\Updownarrow$}  
{\small Prop.~\ref{prop:EL-EM}}
}
\vspace{2mm}

\textbf{Extremal binary matrices with no triangles} \\
\begin{small}
(combinatorial matrix theory)
\end{small}
\end{minipage}
\begin{minipage}[]{40mm}
\centering
\usebox{\boxRV}
\end{minipage}
};

\draw[<->, thick] (G) -- node[above]{\small Thm.~\ref{thm:graph-ASPD}} (D);
\draw[<->, thick] (G) -- node[left=3pt]{\small Thm.~\ref{thm:graph-vine}} (V);
\draw[<->, thick] (D) -- node[right=3pt]{\small Thm.~\ref{thm:vine-ASPD}} (V);
\end{tikzpicture}
\caption{Relationships between the combinatorial structures considered in this paper.}
\label{fig:summary}
\end{figure}
 
 \vskip 1em
\noindent
\textbf{Acknowledgements.} 
The authors thank A.~V.~Karpov for informing us, after the first draft of this paper appeared, of his work \cite{Karpov25} on the characterization of maximal ASPDs via extremal binary matrices with no triangles. 
This led us to observe the equivalence between $(n,3)$-extremal lattices and extremal binary matrices with no triangles established in Proposition~\ref{prop:EL-EM}.
S.~Tsujie was supported by JSPS KAKENHI Grant Numbers JP23H00081 and JP26K16955.  

%********************************************************************************************************
 \section{Preliminaries}
\label{sec:Preliminaries}
%******************************************************************************** 
 \subsection{MAT-labeled graphs}
\label{subsec:graphs}

In this subsection, we recall several basic properties of MAT-labeled graphs that will be used later. 
All graphs in this paper are finite, undirected, and simple.
Let $G=(A,E)$ be a graph and let $\lambda \colon E \longrightarrow \mathbb{Z}_{>0}$ be an edge-labeling. 
For simplicity of notation, we write
\[
\lambda(a,b)\coloneqq\lambda(\{a,b\})
\]
for the label of an edge $\{a,b\}\in E$. 

\begin{definition}[MAT-simplicial vertices {\cite[Definition 5.1]{TT23}}]
\label{definition MAT-simplicial}
Let $(G,\lambda)$ be an edge-labeled graph. 
A vertex $a\in A$ is called \textbf{MAT-simplicial} if the following conditions hold:
\begin{enumerate}[(1)]
\item $a$ is simplicial in $G$, that is, the neighborhood of $a$ forms a clique;
\item the edges of $G$ incident to $a$ have labels $1,2,\dots,\deg_G(a)$, where $\deg_G(a)$ denotes the degree of $a$;
\item for any distinct vertices $b,c$ adjacent to $a$,
\[
\lambda(b,c) < \max\{\lambda(b,a),\lambda(c,a)\}.
\]
\end{enumerate}
\end{definition}

\begin{definition}[MAT-perfect elimination orderings {\cite[Definition 5.4]{TT23}}]
\label{def:MAT-PEO}
Let $(G,\lambda)$ be an edge-labeled graph on $n$ vertices. 
An ordering $(a_1,\dots,a_n)$ of the vertices of $G$ is called an \textbf{MAT-perfect elimination ordering (MAT-PEO)} if, for each $i$, the vertex $a_i$ is MAT-simplicial in the induced subgraph of $G$ on $\{a_{1}, \dots, a_{i}\}$ equipped with the restriction of $\lambda$. 
\end{definition}

\begin{theorem}[{\cite[Theorem 5.5]{TT23}}]
\label{thm:MAT-PEO}
An edge-labeled graph $(G,\lambda)$ is MAT-labeled if and only if it admits an MAT-PEO.
\end{theorem}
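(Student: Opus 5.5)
We argue by induction on the number $n$ of vertices, treating the two implications separately. Both directions rest on one local comparison. Let $a$ be a vertex, let $G' = G - a$, and let $\lambda'$ be the restriction of $\lambda$. We compare conditions (1) and (2) of Definition~\ref{definition MAT-labeling} for $(G,\lambda)$ with the same conditions for $(G',\lambda')$, assuming that $a$ satisfies the three conditions of Definition~\ref{definition MAT-simplicial}.

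For ``MAT-PEO $\Rightarrow$ MAT-labeled'', let $(a_1,\dots,a_n)$ be an MAT-PEO and set $a=a_n$. By induction $(G',\lambda')$ is MAT-labeled, since $(a_1,\dots,a_{n-1})$ is an MAT-PEO of $G'$. We must check the axioms for the new edges $\{a,b\}$ and for old edges that gain new triangles.

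\emph{Triangle axiom for a new edge.} Since $N(a)$ is a clique, the triangles on $\{a,b\}$ are exactly $\{a,b,c\}$ with $c\in N(a)\setminus\{b\}$. Suppose $\lambda(a,b)=k$. Such a triangle uses only labels $<k$ precisely when $\lambda(a,c)<k$ and $\lambda(b,c)<k$. By condition (3), $\lambda(a,c)<k$ already forces $\lambda(b,c)<\max\{k,\lambda(a,c)\}=k$. The labels at $a$ are $1,\dots,\deg(a)$, so there are exactly $k-1$ such $c$.

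\emph{Triangle axiom for an old edge.} Take $\{b,c\}\subseteq N(a)$ with label $k$. By condition (3) one of $\lambda(a,b),\lambda(a,c)$ is $>k$, so the new triangle $\{a,b,c\}$ does not use only labels $<k$. Hence the count of $k-1$ is preserved.

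\emph{Cycle axiom.} Suppose a cycle $C$ using only labels $\le k$, and at least one label equal to $k$, passes through $a$. Then $C$ enters and leaves $a$ via neighbours $b,c$. Replace the path $b\,a\,c$ by the chord $\{b,c\}$, whose label is $<\max\{\lambda(a,b),\lambda(a,c)\}\le k$. If $C$ is a triangle, either $\{b,c\}$ or $\{a,b\},\{a,c\}$ carries label $k$. Checking this small case directly against condition (3) and the distinctness of labels at $a$ should rule it out. Otherwise we get a shorter cycle in $G'$, and we need it still to contain an edge of label exactly $k$. If the only label-$k$ edges of $C$ were at $a$, then the chord has label $<k$, and the axiom in $G'$ at a smaller level must be used. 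I would handle this by choosing a minimal counterexample (minimal $k$, then minimal cycle length).

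This bookkeeping in the cycle axiom is the step I expect to be the main obstacle.

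For the converse, it suffices to show that every MAT-labeled graph has an MAT-simplicial vertex $a$. Induction then applies, because removing a vertex preserves both axioms: the argument above, run in reverse, shows that triangle counts for old edges are unchanged when $a$ is MAT-simplicial. To find $a$, consider the maximum label $m$ and the edges of $\pi_m$. I would take a vertex that is simplicial in the chordal graph $G$, exploiting the strong chordality from \cite{TT23}, and chosen extremally with respect to the forest structure of $\pi_{\le k}$ given by axiom (1). Then verify conditions (2) and (3) using axiom (2): a label-$k$ edge at $a$ must see $k-1$ lower-labelled triangles through $a$'s clique neighbourhood, which forces the labels at $a$ to be distinct and consecutive. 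Existence of such a vertex is the second delicate point; if the direct approach stalls, I would instead cite the corresponding lemma in \cite{TT23}.
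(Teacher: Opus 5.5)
\textbf{The cycle axiom is misread.} Axiom~(1) of Definition~\ref{definition MAT-labeling} forbids a cycle all of whose edges lie in $\pi_k$ except at most one, which lies in $\pi_{<k}$. Equivalently, $\pi_k$ is a forest and no edge of $\pi_{<k}$ joins two vertices of one component of $\pi_k$. It does \emph{not} forbid every cycle in $\pi_{\le k}$ that contains a label-$k$ edge. Under your reading, axioms~(1) and~(2) would be incompatible for every label $k\ge 2$, because the $k-1\ge 1$ lower triangles demanded by~(2) are exactly such cycles. The theorem would then be false.

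For example, take $\{u,v,w\}$ with $\lambda(u,v)=\lambda(v,w)=1$ and $\lambda(u,w)=2$. The ordering $(u,v,w)$ is an MAT-PEO, yet the triangle through $w$ is precisely the ``small case'' you hope to rule out. In fact, your own triangle-axiom paragraph creates $k-1$ such triangles at each new edge, so the minimal-counterexample bookkeeping targets a false claim.

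With the correct reading the step is short:
\begin{enumerate}[(i)]
\item A forbidden cycle $C$ through $a$ meets $a$ in two edges with distinct labels. So exactly one of them, say $\{a,b\}$, is in $\pi_k$, and the other, $\{a,c\}$, is the exceptional edge in $\pi_{<k}$.
\item Hence $C-a$ is a $b$--$c$ path $P$ in $\pi_k$.
\item Condition~(3) of Definition~\ref{definition MAT-simplicial} gives $\lambda(b,c)<k$.
\item If $P$ is a single edge, this contradicts $\{b,c\}\in\pi_k$. Otherwise $P$ together with the edge $\{b,c\}$ is a forbidden cycle in $G-a$, contradicting the induction hypothesis.
\end{enumerate}

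\textbf{The converse is not proved.} Your reduction is fine: axiom~(1) passes to induced subgraphs, and condition~(3) keeps the old triangle counts. But the existence of an MAT-simplicial vertex in every MAT-labeled graph is the entire content of this direction, and your sketch does not establish it. The suggested ingredients do not work as stated.
\begin{enumerate}[(i)]
\item Axiom~(1) makes each $\pi_k$ a forest, not $\pi_{\le k}$.
\item (Strong) chordality only yields simplicial (or simple) vertices, and axiom~(2) does not force distinct labels there. In the triangle above, $v$ is simplicial, even simple, yet carries labels $1,1$.
\item Choosing a vertex via the top label also fails. Take the path $w\,x\,y\,z\,u$ with all labels $1$, plus the chord $\{x,z\}$ with label $2$. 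This graph is MAT-labeled, the endpoints $x,z$ of its unique label-$2$ edge are not even simplicial, and its only MAT-simplicial vertices are $w$ and $u$.
\end{enumerate}

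What is needed is a genuine structural lemma that uses the labels globally, for example about the principal cliques $C_e$ and how maximal cliques fit together. Falling back on ``the corresponding lemma in \cite{TT23}'' just imports the result. That is all the present paper does: it quotes \cite[Theorem~5.5]{TT23} without proof.
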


Let  $K_A$ denote the complete graph on vertex set $A$.
In this paper, we will be particularly interested in MAT-labeled complete graphs. 
We recall two structural properties that will play a central role in our framework.

\begin{lemma}[Splitting MAT-labeled complete graphs {\cite[Lemma 5.2]{TT23}}]
\label{lem:splitting-MAT}
Let $(K_A,\lambda)$ be an MAT-labeled complete graph with $|A|\ge2$. 
Then it has exactly two MAT-simplicial vertices $a_1,a_2$, namely the endpoints of the edge with largest label. 
Let
\[
G_i \coloneqq K_{A\setminus \{a_i\}}, \qquad
\lambda_i \coloneqq \lambda|_{E_{G_i}} \quad (i=1,2),
\]
and
\[
G' \coloneqq G_{1} \cap G_{2} = K_{A\setminus\{a_{1},a_{2}\}}, \qquad
\lambda' \coloneqq \lambda|_{E_{G'}}.
\]
Then $(G_i,\lambda_i)$ and $(G',\lambda')$ are MAT-labeled complete graphs.
\end{lemma}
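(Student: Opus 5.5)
Let $n=|A|$. The plan has three steps: determine the label structure of $(K_A,\lambda)$, identify its MAT-simplicial vertices, and then prove heredity to the subgraphs.

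\emph{Label structure.} An MAT-labeled complete graph has an MAT-PEO $(a_1,\dots,a_n)$ by Theorem~\ref{thm:MAT-PEO}. In the induced complete graph on $\{a_1,\dots,a_i\}$, the vertex $a_i$ has degree $i-1$. So its edges to earlier vertices carry the labels $1,\dots,i-1$, each exactly once. Hence the multiset of labels is $\{k^{\,n-k}: 1\le k\le n-1\}$. In particular the largest label $n-1$ occurs exactly once, on the edge $\{a_{n-1},a_n\}$. Call its endpoints $x,y$.

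\emph{Which vertices are MAT-simplicial.} Simpliciality is automatic in a complete graph. So a vertex $v$ is MAT-simplicial exactly when its $n-1$ incident edges carry the labels $1,\dots,n-1$ and condition (3) holds at $v$. Any such $v$ is incident to the unique edge labelled $n-1$, so $v\in\{x,y\}$. Hence there are at most two MAT-simplicial vertices.

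Conversely, $a_n$ is MAT-simplicial by the definition of an MAT-PEO. It remains to show that $a_{n-1}$ is MAT-simplicial in $K_A$, and this is the main obstacle. I would show that swapping $a_{n-1}$ and $a_n$ again gives an MAT-PEO, which means re-verifying conditions (2) and (3) at both vertices. The alternative route is through the original definition: show that $(K_A,\lambda)$ is symmetric in $x,y$ for conditions (1)--(2) of an MAT-labeling, and then apply \cite[Lemma 5.2]{TT23}-style reasoning.

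\emph{Heredity.} Since $a_i$ is MAT-simplicial, removing it leaves an MAT-labeled graph. To see this, note that $(a_1,\dots,a_{n-1})$, or the ordering with $a_{n-1}$ and $a_n$ swapped, is still an MAT-PEO of the restricted labeling, and apply Theorem~\ref{thm:MAT-PEO}. This gives that $(G_1,\lambda_1)$ and $(G_2,\lambda_2)$ are MAT-labeled. For $G'$, restrict further: $(a_1,\dots,a_{n-2})$ is a prefix of the MAT-PEO. Each step only uses the fact that MAT-simpliciality of $a_i$ refers solely to the induced subgraph on $\{a_1,\dots,a_i\}$, so prefixes of an MAT-PEO are MAT-PEOs.
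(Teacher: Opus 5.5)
Your counting step is correct. The labels form the multiset $\{k^{\,n-k}\}$, the label $n-1$ occurs exactly once, and every MAT-simplicial vertex must be incident to that edge, so there are at most two such vertices.

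The error is the claim that this edge is $\{a_{n-1},a_n\}$. Condition (2) at $a_n$ only says that the edges from $a_n$ carry the labels $1,\dots,n-1$ in \emph{some} order. So the top edge is $\{y,a_n\}$ for whichever vertex $y$ receives label $n-1$. The paper's own example refutes your claim: $(a,b,c,d,e)$ is an MAT-PEO of the graph $G$ in Figure~\ref{Fig:ex_sp_mg_MAT}, yet the edge labelled $4$ is $\{a,e\}$. Consequently:
\begin{enumerate}[(i)]
\item $d=a_{n-1}$ carries the labels $3,2,1,2$, so it is not MAT-simplicial;
\item $(a,b,c,e,d)$ is not an MAT-PEO;
\item your prefix $(a,b,c)$ yields $K_{\{a,b,c\}}$, not $G'=K_{\{b,c,d\}}$.
\end{enumerate}
So the step you call the main obstacle is aimed at a false statement, and your heredity arguments for the second $G_i$ and for $G'$ collapse with it. Your fallback of appealing to \cite[Lemma 5.2]{TT23} is circular, since that is exactly the statement to be proved; this paper only cites it and gives no proof.

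What is missing is an induction on $n$ driven by condition (3) at $x\coloneqq a_n$, with the trivial base case $n=2$.
\begin{enumerate}[(i)]
\item For $b,c\neq x$ we have $\lambda(b,c)<\max\{\lambda(b,x),\lambda(c,x)\}$. Hence $\lambda(b,c)=n-2$ forces this maximum to be $n-1$, that is, $y\in\{b,c\}$.
\item Therefore the unique edge labelled $n-2$ in $K_{A\setminus\{x\}}$ is incident to $y$. This graph is MAT-labeled, since $(a_1,\dots,a_{n-1})$ is an MAT-PEO of it.
\item By the induction hypothesis applied to $K_{A\setminus\{x\}}$, the vertex $y$ is MAT-simplicial there, and $K_{A\setminus\{x,y\}}=G'$ is MAT-labeled.
\item Adding the edge $\{y,x\}$ with label $n-1$ keeps $y$ MAT-simplicial in $K_A$. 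Its labels become $1,\dots,n-1$. Pairs inside $A\setminus\{x,y\}$ inherit condition (3). For pairs $\{x,c\}$, condition (3) holds because $\lambda(x,c)\le n-2<n-1=\lambda(x,y)$.
\item Appending $x$ to an MAT-PEO of $G'$ gives an MAT-PEO of $K_{A\setminus\{y\}}$. The labels at $x$ there are $1,\dots,n-2$, since the deleted edge carried $n-1$, and condition (3) at $x$ is inherited.
\end{enumerate}
Together with your prefix argument for $K_{A\setminus\{x\}}$, this proves all three heredity claims.

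Alternatively, you can check directly from Definition~\ref{definition MAT-labeling} that deleting any MAT-simplicial vertex $v$ preserves MAT-labeling. Condition (1) passes to subgraphs. For condition (2), no triangle $\{b,c,v\}$ was counted for the edge $\{b,c\}$, because condition (3) at $v$ gives $\max\{\lambda(b,v),\lambda(c,v)\}>\lambda(b,c)$.
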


\begin{lemma}[Merging MAT-labeled complete graphs {\cite[Lemma 5.7]{TT23}}]
\label{lem:merge-MAT}
Let $A$ be a finite set and let $a_1,a_2\in A$ be distinct elements. 
For each $i\in\{1,2\}$, let $(G_i,\lambda_i)$ be an MAT-labeled complete graph with vertex set $A\setminus\{a_i\}$. 
Let
\[
G' \coloneqq G_{1} \cap G_{2} = K_{A\setminus\{a_{1},a_{2}\}},
\]
and assume that
\[
\lambda_{1}\vert_{E_{G^{\prime}}} = \lambda_{2}\vert_{E_{G^{\prime}}} \eqqcolon \lambda',
\]
where $\lambda^{\prime}$ is an MAT-labeling of $G^{\prime}$. 

Define a labeling
\[
\lambda:E_{K_A}\longrightarrow\mathbb Z_{>0}
\]
by
\[
\lambda(e)=
\begin{cases}
\lambda_i(e), & e\in E_{G_i},\\
|A|-1, & e=\{a_1,a_2\}.
\end{cases}
\]
Then $(K_A,\lambda)$ is an MAT-labeled complete graph.
\end{lemma}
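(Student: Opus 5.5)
We prove Lemma~\ref{lem:merge-MAT} by exhibiting an MAT-perfect elimination ordering of $(K_A,\lambda)$ and then applying Theorem~\ref{thm:MAT-PEO}. Put $n=|A|$. The base case $n=2$ is immediate: $K_A$ is a single edge with label $1=n-1$. So assume $n\ge 3$.

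\textbf{Step 1: the ordering.} By Lemma~\ref{lem:splitting-MAT} applied to $(G_1,\lambda_1)$, the vertex $a_2$ is MAT-simplicial in $G_1$. Since $a_2\in A\setminus\{a_1\}$, it is not the case that both ends of the top edge of $G_1$ could be $a_1$, and $a_2$ is one of exactly two MAT-simplicial vertices of $G_1$. Hence $G_1-a_2=G'$ is MAT-labeled, and Theorem~\ref{thm:MAT-PEO} gives an MAT-PEO $(c_1,\dots,c_{n-2})$ of $(G',\lambda')$. We propose the ordering
\[
(c_1,\dots,c_{n-2},a_2,a_1)
\]
for $(K_A,\lambda)$.

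\textbf{Step 2: the initial segments.} The prefix $(c_1,\dots,c_{n-2},a_2)$ lives inside $G_1$ with labeling $\lambda_1$, and $\lambda$ restricts to $\lambda_1$ there. Its first $n-2$ entries form an MAT-PEO of $G'$ by construction. The vertex $a_2$ is MAT-simplicial in $G_1$ by Lemma~\ref{lem:splitting-MAT}. So each of the first $n-1$ positions satisfies the MAT-simplicial condition.

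\textbf{Step 3: the last vertex $a_1$.} We check the three conditions of Definition~\ref{definition MAT-simplicial} in $K_A$.
\begin{enumerate}[(1)]
\item Simpliciality holds trivially, since $K_A$ is complete.
\item The labels at $a_1$ must be exactly $1,\dots,n-1$. The edges from $a_1$ to vertices of $G'$ lie in $G_2$, and $a_1$ is MAT-simplicial in $G_2$ (again by Lemma~\ref{lem:splitting-MAT}, as $a_1$ is an endpoint of the top edge of $G_2$, just as $a_2$ is for $G_1$). Hence in $G_2$ the vertex $a_1$ carries labels $1,\dots,n-2$. These are exactly its labels to $A\setminus\{a_1,a_2\}$, and adding the new edge $\{a_1,a_2\}$ with label $n-1$ completes the set.
\item We need $\lambda(b,c)<\max\{\lambda(b,a_1),\lambda(c,a_1)\}$ for distinct neighbours $b,c$ of $a_1$. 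If $b,c\in A\setminus\{a_1,a_2\}$, this is condition (3) for $a_1$ in $G_2$. If $c=a_2$, then $\lambda(a_1,a_2)=n-1$, so we need $\lambda(b,a_2)<n-1$. This holds because every label of $G_1$ is at most $n-2$: the labels at each vertex of an MAT-labeled complete graph on $n-1$ vertices are bounded by its degree $n-2$.
\end{enumerate}

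\textbf{The main obstacle} is the justification that $a_2$ (respectively $a_1$) is MAT-simplicial in $G_1$ (respectively $G_2$). Lemma~\ref{lem:splitting-MAT} only tells us that the MAT-simplicial vertices of $G_1$ are the endpoints of its top-labelled edge, and a priori $a_2$ need not be one of them. To close this gap we would first try to show it from the compatibility hypothesis $\lambda_1|_{E_{G'}}=\lambda_2|_{E_{G'}}$.

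If that fails, we would abandon the ordering of Step 1 and take an arbitrary MAT-PEO of $G_1$, then append $a_1$. Since every vertex is simplicial in a complete graph, Step 3 goes through as long as we can still guarantee conditions (2) and (3) for $a_1$. Condition (2) needs $a_1$'s labels in $G_2$ to be exactly $1,\dots,n-2$. Condition (3) for pairs inside $G'$ needs the inequality in $G_2$ at $a_1$. So the crux is the same: $a_1$ must be MAT-simplicial in $G_2$.

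A fallback is to verify Definition~\ref{definition MAT-labeling} directly. For an edge of $G_i$ with label $k$, the triangles it forms with lower-labelled edges all lie inside $G_1$ or $G_2$. The only exception is a triangle through both $a_1$ and $a_2$, which would need the edge $\{a_1,a_2\}$ of label $n-1>k$, so it never counts. For the same reason no cycle can use the new top edge among lower labels. The one remaining count is the edge $\{a_1,a_2\}$ itself: it forms the $n-2$ triangles $\{a_1,a_2,b\}$ with edges of label $<n-1$, exactly the required $k-1=n-2$. This fallback may in fact be the cleanest complete argument. For condition (1), a cycle of label-$k$ edges together with smaller edges that mixes $G_1$ and $G_2$ can be shortcut through $G'$-edges using the triangle structure, and controlling this shortcutting is the delicate step.
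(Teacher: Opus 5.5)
\textbf{There is a genuine gap.} Your reduction to an MAT-PEO is the right idea, and Steps~2--3 are correct \emph{provided} $a_2$ is MAT-simplicial in $(G_1,\lambda_1)$ and $a_1$ is MAT-simplicial in $(G_2,\lambda_2)$. That is never established.

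The reason given in Step~1 is vacuous: $a_1$ is not even a vertex of $G_1$, and Lemma~\ref{lem:splitting-MAT} only identifies the MAT-simplicial vertices as the endpoints of the top edge, not that $a_2$ is one of them. You then concede the point is open. The fallback does not escape it either. Its cycle condition is left undone. Its triangle count for an edge $e\subseteq A'$ of label $k$ is not simply ``the triangles lie in $G_1$ or $G_2$'': you must show that the triangles $e\cup\{a_1\}$ and $e\cup\{a_2\}$ do not push the count above $k-1$.

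Moreover, the hypothesis you propose to use, the compatibility $\lambda_1|_{E_{G'}}=\lambda_2|_{E_{G'}}$, cannot do the job alone. Take $A=\{a_1,a_2,x,y,z\}$. Let $\lambda_1(p_i,p_j)=|i-j|$ along the path $(p_1,p_2,p_3,p_4)=(x,a_2,y,z)$, and let $\lambda_2$ be the analogous labeling along $(x,a_1,y,z)$. Both are MAT-labelings and they agree on $\{x,y,z\}$. Yet the merged labeling has the label-$1$ cycle $x\,a_1\,y\,a_2$.

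\textbf{The fix.} The missing ingredient is the hypothesis that $\lambda'$ is an MAT-labeling of $K_{A'}$.
\begin{enumerate}[(1)]
\item In an MAT-labeled $K_m$, an edge of label $k$ lies in exactly $k-1$ lower-labelled triangles, out of only $m-2$ triangles through it. So all labels are at most $m-1$, and every $\lambda'$-label is at most $n-3$.
\item By Lemma~\ref{lem:splitting-MAT}, the endpoints of the top edge of $G_1$ are MAT-simplicial, so they carry labels $1,\dots,n-2$. Hence that top edge has label $n-2$ and cannot lie in $G'$.
\item Therefore the top edge of $G_1$ contains $a_2$, and Lemma~\ref{lem:splitting-MAT} makes $a_2$ MAT-simplicial in $G_1$. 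Symmetrically, $a_1$ is MAT-simplicial in $G_2$.
\end{enumerate}
With this inserted, $(c_1,\dots,c_{n-2},a_2,a_1)$ is an MAT-PEO, and Theorem~\ref{thm:MAT-PEO} finishes the proof.

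The same two facts also complete your fallback. MAT-ness of $\lambda'$ gives exactly $k-1$ lower-labelled triangles inside $A'$, which forces the triangles through $a_1$ and $a_2$ not to count. Distinct labels at $a_i$ give $a_i$ degree at most one in each $\pi_k$ with $k\le n-2$. That rules out any forbidden cycle through both $a_1$ and $a_2$, while a cycle avoiding one of them lies inside some $G_i$.
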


\begin{example}
See Figure~\ref{Fig:ex_sp_mg_MAT} for an example of splitting and merging MAT-labeled complete graphs.
The graph $G$ splits into $G_{1}, G_{2}$, and $G^{\prime}$, each equipped with the restricted labeling.
Conversely, merging $G_{1}$ and $G_{2}$ along $G^{\prime}$ recovers $G$. 
\begin{figure}[htbp]
\centering
\begin{tikzpicture}[baseline=0]
\draw (126: 15mm) node[c](0){$a$};
\draw (198: 15mm) node[c](1){$b$};
\draw (270: 15mm) node[c](2){$c$};
\draw (342: 15mm) node[c](3){$d$};
\draw (414: 15mm) node[c](4){$e$};
\draw[label1] (0)--node[midway, fill=white, inner sep=1pt] {\scalebox{.8}{1}} (1) ;
\draw[label2] (0)--node[midway, fill=white, inner sep=1pt] {\scalebox{.8}{2}} (2);
\draw[label3] (0)--node[midway, fill=white, inner sep=1pt] {\scalebox{.8}{3}}(3);
\draw[label4] (0)--node[midway, fill=white, inner sep=1pt] {\scalebox{.8}{4}}(4);
\draw[label1] (1)--node[midway, fill=white, inner sep=1pt] {\scalebox{.8}{1}} (2);
\draw[label2] (1)--node[midway, fill=white, inner sep=1pt] {\scalebox{.8}{2}} (3);
\draw[label3] (1)--node[midway, fill=white, inner sep=1pt] {\scalebox{.8}{3}}(4);
\draw[label1] (2)--node[midway, fill=white, inner sep=1pt] {\scalebox{.8}{1}} (3);
\draw[label1] (2)--node[midway, fill=white, inner sep=1pt] {\scalebox{.8}{1}} (4);
\draw[label2] (3)--node[midway, fill=white, inner sep=1pt] {\scalebox{.8}{2}}(4);
\draw (0,-2.3) node{$(G,\lambda)$}; 
\end{tikzpicture}
\hspace{5mm}
\begin{tikzpicture}[baseline=0]
\draw (126: 15mm) node[c](0){$a$};
\draw (198: 15mm) node[c](1){$b$};
\draw (270: 15mm) node[c](2){$c$};
\draw (342: 15mm) node[c](3){$d$};
\draw[label1] (0)--node[midway, fill=white, inner sep=1pt] {\scalebox{.8}{1}} (1) ;
\draw[label2] (0)--node[midway, fill=white, inner sep=1pt] {\scalebox{.8}{2}} (2);
\draw[label3] (0)--node[midway, fill=white, inner sep=1pt] {\scalebox{.8}{3}}(3);
\draw[label1] (1)--node[midway, fill=white, inner sep=1pt] {\scalebox{.8}{1}} (2);
\draw[label2] (1)--node[midway, fill=white, inner sep=1pt] {\scalebox{.8}{2}} (3);
\draw[label1] (2)--node[midway, fill=white, inner sep=1pt] {\scalebox{.8}{1}} (3);
\draw (0,-2.3) node{$(G_{1},\lambda_{1})$}; 
\end{tikzpicture}
\hspace{5mm}
\begin{tikzpicture}[baseline=0]
\draw (198: 15mm) node[c](1){$b$};
\draw (270: 15mm) node[c](2){$c$};
\draw (342: 15mm) node[c](3){$d$};
\draw (414: 15mm) node[c](4){$e$};
\draw[label1] (1)--node[midway, fill=white, inner sep=1pt] {\scalebox{.8}{1}} (2);
\draw[label2] (1)--node[midway, fill=white, inner sep=1pt] {\scalebox{.8}{2}} (3);
\draw[label3] (1)--node[midway, fill=white, inner sep=1pt] {\scalebox{.8}{3}}(4);
\draw[label1] (2)--node[midway, fill=white, inner sep=1pt] {\scalebox{.8}{1}} (3);
\draw[label1] (2)--node[midway, fill=white, inner sep=1pt] {\scalebox{.8}{1}} (4);
\draw[label2] (3)--node[midway, fill=white, inner sep=1pt] {\scalebox{.8}{2}}(4);
\draw (0,-2.3) node{$(G_{2},\lambda_{2})$}; 
\end{tikzpicture}
\hspace{5mm}
\begin{tikzpicture}[baseline=0]
\draw (198: 15mm) node[c](1){$b$};
\draw (270: 15mm) node[c](2){$c$};
\draw (342: 15mm) node[c](3){$d$};
\draw[label1] (1)--node[midway, fill=white, inner sep=1pt] {\scalebox{.8}{1}} (2);
\draw[label2] (1)--node[midway, fill=white, inner sep=1pt] {\scalebox{.8}{2}} (3);
\draw[label1] (2)--node[midway, fill=white, inner sep=1pt] {\scalebox{.8}{1}} (3);
\draw (0,-2.3) node{$(G^{\prime},\lambda^{\prime})$}; 
\end{tikzpicture}
\caption{Example of splitting and merging of MAT-labeled complete graphs}
\label{Fig:ex_sp_mg_MAT}
\end{figure}
\end{example}

 %********************************************************************************************************
\subsection{Regular vines}
\label{subsec:vine}
  
We recall several structural properties of regular vines established in \cite{TTT24}.

\begin{proposition}[{\cite[Remark 3.15]{TTT24}}]
\label{prop:card-R-vine} 
If $\mathcal{V}$ is a regular vine on a set $A$ with $|A|=n$, then 
$|\mathcal{V}(i)| = n+1-i$ for each $1 \le i \le n$. 
In particular, $A$ is the maximal element of $\mathcal{V}$ and 
$|\mathcal{V}| = n(n+1)/2$. 
\end{proposition}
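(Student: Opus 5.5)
The plan is to count the elements of each rank from the top down, using the tree condition (4) of Definition~\ref{def:regular vine; poset}, and then to identify the maximal element.

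First, condition (2) gives $|\mathcal{V}(1)| = n$, since the minimal elements are exactly the singletons $\{a\}$ for $a \in A$. Next, fix $1 \le i \le n-1$. By condition (4), the $i$-th associated tree has vertex set $\mathcal{V}(i)$ and edge set in bijection with $\mathcal{V}(i+1)$. The bijection holds because each element of $\mathcal{V}(i+1)$ covers exactly two elements of $\mathcal{V}(i)$ by condition (3), so it gives a genuine edge; and distinct elements of $\mathcal{V}(i+1)$ give distinct edges because the associated graph is a tree, so it has no parallel edges. A tree on $m$ vertices has $m-1$ edges, so
\[
|\mathcal{V}(i+1)| = |\mathcal{V}(i)| - 1 .
\]
Induction on $i$ then gives $|\mathcal{V}(i)| = n+1-i$ for all $1 \le i \le n$.

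The one point needing care is that ranks run from $1$ to $n$. By condition (1) every maximal chain has length $n-1$ and starts at rank $1$, so the top rank is $n$. Since elements of rank $i$ are subsets of $A$ of cardinality $i$ (the rank equals the cardinality in the Boolean lattice realization), the rank-$n$ level contains at most one element. It contains exactly one, namely $A$, by the count $|\mathcal{V}(n)| = 1$. Every element lies on some maximal chain, and every maximal chain ends at rank $n$, so $A$ is the maximum of $\mathcal{V}$.

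Finally, summing over ranks gives
\[
|\mathcal{V}| = \sum_{i=1}^{n} (n+1-i) = \frac{n(n+1)}{2}.
\]
I do not expect a real obstacle here. The only subtle step is the edge/element bijection: the tree condition (4) already forbids two elements of $\mathcal{V}(i+1)$ from covering the same pair.
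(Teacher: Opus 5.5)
Your proof is correct and follows the standard argument behind \cite[Remark~3.15]{TTT24}, which the paper cites without reproducing. Condition~(4) makes $\mathcal{V}(i+1)$ the edge set of a tree on $\mathcal{V}(i)$, so the level sizes drop by exactly one starting from $|\mathcal{V}(1)|=n$; the identification of the top element as $A$ and the total count $n(n+1)/2$ then follow immediately, as you show.
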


\begin{proposition}[{\cite[Lemma 5.1]{TTT24}}]
\label{prop:atom}
Every element of rank at least $2$ in a (locally) regular vine is the join of a unique pair of minimal elements.
\end{proposition}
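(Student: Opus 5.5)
We argue by induction on the rank of $X\in\mathcal{V}$ with $|X|\ge 2$, proving existence and uniqueness of minimal elements $\{a\},\{b\}$ with $X=\{a\}\vee\{b\}$ in $\mathcal{V}$. Since $\mathcal{V}$ is an induced subposet of $(2^{A},\subseteq)$ (or an ideal of one), the order is inclusion. So $\{a\}\vee\{b\}=X$ means: $X$ is the unique minimal element of $\mathcal{V}$ containing both $a$ and $b$.

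For existence, let $X$ have rank $i\ge 2$; it covers exactly two elements $Y,Z\in\mathcal{V}(i-1)$. When $i=2$, $Y$ and $Z$ are distinct singletons $\{a\},\{b\}$, $X=\{a,b\}$, and $X$ is the only element of $\mathcal{V}$ containing both, so $X=\{a\}\vee\{b\}$. When $i\ge 3$, proximity gives a common element $W\in\mathcal{V}(i-2)$ covered by $Y$ and $Z$. Since $Y\ne Z$ have equal size and contain $W$, we get $Y=W\cup\{a\}$ and $Z=W\cup\{b\}$ with $a\ne b$, so $X=Y\cup Z=W\cup\{a,b\}$. The pair we want is $\{a\},\{b\}$, the two elements of the symmetric difference $Y\triangle Z$.

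To show $X$ is the least element of $\mathcal{V}$ containing $a$ and $b$, suppose $X'\in\mathcal{V}$ contains $a,b$ and has $|X'|\le|X|$. This is the main obstacle. Our plan is to use the tree condition (4) together with induction: in the $(i-1)$-st associated tree, $X$ is the edge $YZ$. In the $1$st tree, the vertices containing $a$ versus $b$ are just the singletons; more generally we want to show that the elements of $\mathcal{V}(j)$ containing a given atom form a connected subtree, a ``running intersection'' property, and prove this by induction on $j$ using proximity. Then the elements of $\mathcal{V}(i-1)$ containing $a$ and not $b$ and those containing $b$ and not $a$ are separated by the edge $YZ$. An element $X'$ of rank $\le i$ containing both would give, through its own covered pair, a second path in a tree, contradicting acyclicity. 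Alternatively, and more cleanly, we can invoke the counting of Proposition~\ref{prop:card-R-vine}. There are $\binom{n}{2}$ pairs and $\sum_{i\ge2}(n+1-i)=\binom{n}{2}$ elements of rank $\ge2$. Each element yields one pair via the symmetric difference of its cover pair. If we show this map is injective, then every pair $\{a,b\}$ is attained exactly once, and the least element containing $\{a,b\}$ is the one it labels. We would try this counting route first, establishing injectivity with the connected-subtree property.

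For uniqueness, suppose $X=\{c\}\vee\{d\}$ for another pair. Then by the previous paragraph $X$ is also the element assigned to $\{c,d\}$. The bijection between pairs and elements of rank $\ge2$ then forces $\{c,d\}=\{a,b\}$. For locally regular vines, the statement follows because an ideal of a regular vine is downward closed. Joins computed in the vine of elements lying below $X$ stay within the ideal, so the property restricts.
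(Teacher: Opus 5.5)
Your reduction is sound as far as it goes. For $X$ of rank at least $2$ with lower covers $Y,Z$, the pair $Y\symdiff Z=\{a,b\}$ is well defined. This is automatic from $Y=X\setminus\{b\}$ and $Z=X\setminus\{a\}$; proximity is not needed here, and its real role is in the subtree argument below. The rank-$2$ case holds, though the reason is that every element containing $a,b$ contains $X=\{a,b\}$, not that $X$ is the only such element ($A$ contains $a,b$ too). The passage to ideals is also correct.

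However, the step that carries the whole proposition is only a plan. You never show that no other element of $\mathcal{V}$ carries the same pair, i.e.\ that $X\mapsto Y\symdiff Z$ is injective. The connected-subtree property is announced but not proved, and the ``second path in a tree'' contradiction is not carried out. The count from Proposition~\ref{prop:card-R-vine} adds nothing: it only upgrades injectivity to bijectivity, and injectivity is exactly what is at stake.

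Two connecting steps are also missing or misaimed. First, you assert that the least element containing $a,b$ ``is the one it labels'' without the needed observation. If $U$ is minimal among elements containing $a$ and $b$, neither lower cover $U\setminus\{y\}$, $U\setminus\{z\}$ contains both, so $\{y,z\}=\{a,b\}$ and $U$ carries $\{a,b\}$. Second, your first target (showing $X'=X$ whenever $X'\ni a,b$ and $|X'|\le|X|$) would not yield the join even if proved. It does not exclude a minimal upper bound of larger rank that fails to contain $X$. You need injectivity across all ranks, combined with the observation above, to conclude that $X$ is the unique minimal, hence least, upper bound. Uniqueness of the pair then needs only that observation, not a bijection.

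You can close the gap in either of two ways.

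The first uses Lemma~\ref{lem:splitting-vine}. Let $\mathcal{V}_1$ be the principal ideal of $A\setminus\{a_1\}$. By Proposition~\ref{prop:card-R-vine}, $\mathcal{V}\setminus\mathcal{V}_1$ has exactly one element $C_k$ in each rank, and each contains $a_1$. For $k\ge 2$, $C_k$ covers $C_{k-1}$ and $C_k\setminus\{a_1\}$, so it carries $\{a_1,x_k\}$ where $C_k=C_{k-1}\cup\{x_k\}$; these pairs are pairwise distinct. Elements of $\mathcal{V}_1$ carry pairs avoiding $a_1$, which are pairwise distinct by induction on $|A|$.

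The second completes your own route.
\begin{enumerate}[(1)]
\item Show by induction on $j$ that $N_j(a)=\{S\in\mathcal{V}(j)\mid a\in S\}$ spans a subtree of the $j$-th associated tree. By proximity, consecutive nodes on a path in the $(j+1)$-st tree share an endpoint as edges of the $j$-th tree. So a path between two edges meeting $N_j(a)$ cannot use an edge lying inside a component of the complement of $N_j(a)$: it would have to enter and leave through that component's unique boundary edge, repeating it.
\item At the level of $Y,Z$, the subtrees $N(a)$ and $N(b)$ are disjoint, since a common node would place the edge $YZ$ inside one of them. Hence $X$ is the only edge joining them, and no element of lower rank contains both $a$ and $b$.
\item At every higher level the two subtrees meet, because a node containing $a,b$ is an endpoint of some edge, which then contains $a,b$ as well. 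This rules out an element of higher rank carrying $\{a,b\}$.
\end{enumerate}
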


The following are two structural properties of regular vines closely paralleling the splitting and merging properties of MAT-labeled complete graphs.

\begin{lemma}[Splitting regular vines {\cite[Proposition 4.4]{TTT24}}]
\label{lem:splitting-vine}
Let $\mathcal{V}$ be a regular vine on a finite set $A$.
Assume that the maximal element $A$ covers two nodes
$A\setminus\{a_{1}\}$ and $A\setminus\{a_{2}\}$ in $\mathcal{V}$ for distinct
$a_{1},a_{2} \in A$.
Let $\mathcal{V}_i$ be the principal ideal of $\mathcal{V}$ generated by
$A\setminus\{a_i\}$ for $i \in \{1,2\}$, that is, 
\begin{align*}
\mathcal{V}_{i} \coloneqq \Set{S \in \mathcal{V} | S \subseteq A\setminus\{a_{i}\}}. 
\end{align*}
Then $\mathcal{V}_i$ and $\mathcal{V}' \coloneqq \mathcal{V}_1 \cap \mathcal{V}_2$ are regular vines on
$A\setminus\{a_i\}$ and $A\setminus\{a_{1},a_{2}\}$, respectively.
\end{lemma}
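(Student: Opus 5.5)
The plan is to check the axioms of Definition~\ref{def:regular vine; poset} directly for $\mathcal{V}_i$. Most axioms pass to ideals for free. The real content is the tree condition~(4), and I will get it from a counting lemma saying that each rank contains exactly one element containing $a_i$.

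\textbf{Step 1 (the easy axioms).} The set $\mathcal{V}_i$ is an ideal of $\mathcal{V}$ with maximum $A\setminus\{a_i\}$. Elements of $\mathcal{V}$ are subsets of $A$ and rank equals cardinality, so every singleton $\{a\}$ with $a\neq a_i$ lies in $\mathcal{V}_i$, and these are its minimal elements.
\begin{itemize}
\item[(1)] A maximal chain of $\mathcal{V}_i$ runs from a singleton to $A\setminus\{a_i\}$. Extending it by $A$ gives a maximal chain of $\mathcal{V}$, of length $n-1$. Hence chains in $\mathcal{V}_i$ have length $n-2$, which is axiom~(1).
\item[(2)] This is the description of the minimal elements just given.
\item[(3)] Axiom~(3) holds because $\mathcal{V}_i$ is downward closed, so every element of $\mathcal{V}_i$ has the same covers there as in $\mathcal{V}$.
\item[(5)] Proximity~(5) holds for the same reason: the common covered element lies below an element of $\mathcal{V}_i$.
\item[(4)] Acyclicity in~(4) is free, since the $j$-th associated graph of $\mathcal{V}_i$ is a subgraph of the $j$-th associated tree of $\mathcal{V}$.
\end{itemize}

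\textbf{Step 2 (key claim).} For each $1\le j\le n-1$, exactly one element of $\mathcal{V}(j)$ contains $a_i$. I would prove this by downward induction on $j$.

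At $j=n-1$ the elements are $A\setminus\{a_1\}$ and $A\setminus\{a_2\}$, and only $A\setminus\{a_{3-i}\}$ contains $a_i$.

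Suppose $U$ is the unique element of $\mathcal{V}(j+1)$ containing $a_i$, and let $j\ge1$.
\begin{itemize}
\item Since $U$ is the union of the two elements it covers (Proposition~\ref{prop:atom}), at least one of them contains $a_i$.
\item Any $S\in\mathcal{V}(j)$ containing $a_i$ can only be covered by $U$, so $S$ is a leaf of the $j$-th associated tree whose unique edge is $U$.
\item If both elements covered by $U$ contained $a_i$, the $j$-th tree would be the single edge $U$, forcing $|\mathcal{V}(j)|=2$, that is $j=n-1$. That contradicts $j+1\le n-1$.
\end{itemize}
This also shows that the unique such element of rank $j$ is a leaf whose only incident edge is the unique element of rank $j+1$ containing $a_i$.

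\textbf{Step 3 (tree condition).} The $j$-th associated graph of $\mathcal{V}_i$ is obtained from the $j$-th associated tree of $\mathcal{V}$ as follows. Its vertex set $\mathcal{V}_i(j)$ is $\mathcal{V}(j)$ minus that leaf. Its edge set $\mathcal{V}_i(j+1)$ is $\mathcal{V}(j+1)$ minus the edge at that leaf. Deleting a leaf together with its edge from a tree leaves a tree, so axiom~(4) holds and $\mathcal{V}_i$ is a regular vine on $A\setminus\{a_i\}$.

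\textbf{Step 4 (the vine $\mathcal{V}'$).} First I would show $A\setminus\{a_1,a_2\}\in\mathcal{V}$. By Proposition~\ref{prop:card-R-vine}, $|\mathcal{V}(n-2)|=3$ (assuming $n\ge3$; small $n$ is trivial). By Step~2, exactly one element of $\mathcal{V}(n-2)$ contains $a_1$ and exactly one contains $a_2$. So some element avoids both, and it must equal $A\setminus\{a_1,a_2\}$; it lies in $\mathcal{V}_1\cap\mathcal{V}_2$.

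Within the regular vine $\mathcal{V}_1$, the top $A\setminus\{a_1\}$ therefore covers $A\setminus\{a_1,a_2\}$ and some $A\setminus\{a_1,b\}$. The set $\mathcal{V}'$ is exactly the principal ideal of $\mathcal{V}_1$ generated by $A\setminus\{a_1,a_2\}$. Applying Steps~1--3 to $\mathcal{V}_1$ shows that $\mathcal{V}'$ is a regular vine on $A\setminus\{a_1,a_2\}$.

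The main obstacle is Step~2, specifically ruling out that both elements covered by $U$ contain $a_i$. The leaf-and-tree-size argument above is what I would try first.
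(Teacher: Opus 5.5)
Your proof is correct and complete. The paper gives no proof of this lemma; it only cites \cite[Proposition~4.4]{TTT24}. The useful comparison is therefore with how the paper handles the same structure elsewhere.

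\textbf{How the paper argues.} In Lemma~\ref{lem:v'=v1capv2}, Proposition~\ref{prop:Dvine-chain} and the converse half of the proof of Theorem~\ref{thm:EL-vine}, the paper assumes that $\mathcal{V}_1$ is already a regular vine on $n-1$ points. It then uses the count $|\mathcal{V}(j)|=n+1-j$ from Proposition~\ref{prop:card-R-vine} to conclude that $\mathcal{V}\setminus\mathcal{V}_1$ has exactly one element in each rank, and hence forms a maximal chain.

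\textbf{How your route differs.} You run the logic in the opposite direction. Your downward induction in Step~2 derives the one-element-per-rank and leaf property directly from the tree axiom. Step~3 then deduces the tree axiom for $\mathcal{V}_i$ by deleting a pendant edge at each level. This is the mirror image of the leaf-adding step in the forward half of the proof of Theorem~\ref{thm:EL-vine}.

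\textbf{What each route buys.} Yours is self-contained: it uses only the axioms of Definition~\ref{def:regular vine; poset} and the trivial count. It also shows, as a by-product, that the elements containing $a_i$ form a maximal chain $\{a_i\}=L_1\subset L_2\subset\cdots\subset L_{n-1}\subset A$ of leaves, which is exactly the structural fact the paper later invokes. The paper's usage is shorter but presupposes this lemma.

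Two minor presentational points; neither affects correctness.
\begin{itemize}
\item In Step~2, the fact that $U$ is the union of the two elements it covers is pure cardinality: two distinct $j$-subsets of a $(j+1)$-set have that set as their union. It does not come from Proposition~\ref{prop:atom}, which concerns joins of minimal elements.
\item Your leaf claim silently uses that the $j$-th associated tree is connected on $n+1-j\ge 3$ vertices. That is what guarantees an $S$ containing $a_i$ is covered by at least one element, and hence is an endpoint of $U$. It is worth stating explicitly.
\end{itemize}
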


\begin{lemma}[Merging regular vines {\cite{CKW15}, \cite[Theorem A.1]{ZK22}, \cite{TTT24}}]
\label{lem:merging-vine}
Let $A$ be a finite set and let $a_1,a_2\in A$ be distinct elements.
For each $i \in \{1,2\}$, let $\mathcal{V}_i$ be a regular vine on
$A\setminus\{a_i\}$.
Assume that $\mathcal{V}' \coloneqq \mathcal{V}_1 \cap \mathcal{V}_2$ is a regular vine on
$A\setminus\{a_1,a_2\}$.
Then $\mathcal{V}_1 \cup \mathcal{V}_2 \cup \{A\}$ is a regular vine on $A$.
\end{lemma}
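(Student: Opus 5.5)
We verify directly that $\mathcal{V}\coloneqq\mathcal{V}_1\cup\mathcal{V}_2\cup\{A\}$ satisfies the five conditions of Definition~\ref{def:regular vine; poset}, with $n=|A|$. The case $n=2$ is immediate: $\mathcal{V}=\{\{a_1\},\{a_2\},A\}$. So we assume $n\ge 3$. Then $X\coloneqq A\setminus\{a_1,a_2\}$ is the maximal element of $\mathcal{V}'$, and hence $X$ lies in both $\mathcal{V}_1$ and $\mathcal{V}_2$.

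The first and central step is a compatibility statement:
\[
\Set{S\in\mathcal{V}_j | S\subseteq X}=\mathcal{V}'\qquad (j=1,2).
\]
The left-hand side is the principal ideal of $\mathcal{V}_j$ generated by $X$. We would use the fact from \cite{TTT24} that a principal ideal of a regular vine generated by an element $X$ is a regular vine on $X$. Alternatively, one can derive this by iterating Lemma~\ref{lem:splitting-vine} downward from the top element of $\mathcal{V}_j$ to $X$. By Proposition~\ref{prop:card-R-vine}, this ideal then has $|X|(|X|+1)/2$ elements, which is also the size of $\mathcal{V}'$. Since the ideal contains $\mathcal{V}'$, the two sets are equal.

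The compatibility statement lets us control cover relations. Suppose $T\subsetneq S$ with $S\in\mathcal{V}_1$ and $T\in\mathcal{V}_2$. Then $T\subseteq (A\setminus\{a_1\})\cap(A\setminus\{a_2\})=X$, so $T\in\mathcal{V}'\subseteq\mathcal{V}_1$. Hence the elements of $\mathcal{V}$ lying below an element $S\in\mathcal{V}_1$ are exactly those of $\mathcal{V}_1$, and symmetrically for $\mathcal{V}_2$. Consequently, every cover relation in $\mathcal{V}$ among elements of $\mathcal{V}_1\cup\mathcal{V}_2$ is a cover relation in $\mathcal{V}_1$ or in $\mathcal{V}_2$. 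In addition, $A$ covers exactly $A\setminus\{a_1\}$ and $A\setminus\{a_2\}$, because these are the only elements of rank $n-1$.

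With this in hand, the axioms follow in turn.

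(1) and (2): every maximal chain of $\mathcal{V}$ ends with $A\setminus\{a_i\}\lessdot A$ for some $i$. Below $A\setminus\{a_i\}$ the chain is a maximal chain of $\mathcal{V}_i$, which has length $n-2$. So every maximal chain has length $n-1$. The minimal elements are the $n$ singletons of $A$, since each singleton lies in $\mathcal{V}_1$ or $\mathcal{V}_2$.

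(3): by the cover analysis above, each non-minimal element of $\mathcal{V}_j$ covers in $\mathcal{V}$ exactly the two elements it covers in $\mathcal{V}_j$, and $A$ covers exactly two elements.

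(4): for $i\le n-2$, the $i$-th associated graph of $\mathcal{V}$ is the union of the $i$-th trees of $\mathcal{V}_1$ and $\mathcal{V}_2$, glued along the $i$-th tree of $\mathcal{V}'$; the union of two trees whose intersection is a nonempty tree is connected. By inclusion--exclusion and Proposition~\ref{prop:card-R-vine}, the glued graph has
\[
2(n-i)-(n-1-i)=n+1-i \text{ vertices and } 2(n-i-1)-(n-2-i)=n-i \text{ edges.}
\]
A connected graph with one more vertex than edges is a tree. For $i=n-1$, the graph consists of the two vertices $A\setminus\{a_1\}$, $A\setminus\{a_2\}$ joined by the single edge $A$, which is a tree.

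(5): proximity for a pair covered by some $S\in\mathcal{V}_j$ follows from proximity in $\mathcal{V}_j$. The only new pair is $A\setminus\{a_1\}$, $A\setminus\{a_2\}$, covered by $A$; both cover $X$, which has rank $n-2$ and lies in $\mathcal{V}_1$ and $\mathcal{V}_2$.

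The main obstacle is the first step: we must justify that the principal ideal of $X$ in each $\mathcal{V}_j$ is a regular vine, so that its size can be compared with that of $\mathcal{V}'$. Everything afterwards is bookkeeping.
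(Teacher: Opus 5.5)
Your proof is correct. The paper gives no proof of this lemma; it only cites \cite{CKW15}, \cite[Theorem A.1]{ZK22} and \cite{TTT24}. So there is no in-paper argument to compare with.

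What you give is a self-contained check of the five axioms of Definition~\ref{def:regular vine; poset}. It uses only Lemma~\ref{lem:splitting-vine} and Proposition~\ref{prop:card-R-vine}. Your compatibility step $\{S\in\mathcal{V}_j \mid S\subseteq X\}=\mathcal{V}'$ is the same size-counting trick the paper uses in Lemma~\ref{lem:v'=v1capv2}. It is also exactly what stops an element of $\mathcal{V}_2\setminus\mathcal{V}_1$ from sitting below an element of $\mathcal{V}_1$ and creating extra cover relations. Once you have it, the following steps are sound:
\begin{itemize}
\item down-sets of elements of $\mathcal{V}_j$ stay inside $\mathcal{V}_j$;
\item the inclusion--exclusion count gives $n+1-i$ vertices and $n-i$ edges, with the common vertex set $\mathcal{V}'(i)$ nonempty for $i\le n-2$;
\item proximity at the new top holds via $X$.
\end{itemize}

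The step you flag as the main obstacle is actually immediate. $X$ lies in $\mathcal{V}_j$ and has cardinality $n-2=|A\setminus\{a_j\}|-1$. By Proposition~\ref{prop:card-R-vine}, $\mathcal{V}_j$ has exactly two elements of that size, and both are covered by the top $A\setminus\{a_j\}$. So a single application of Lemma~\ref{lem:splitting-vine} shows that the principal ideal of $X$ in $\mathcal{V}_j$ is a regular vine on $X$. You need neither iteration nor a general statement about principal ideals.

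One small phrasing point: to say that $A$ covers exactly $A\setminus\{a_1\}$ and $A\setminus\{a_2\}$, "these are the only elements of rank $n-1$" is not quite enough on its own. You also need that every other element of $\mathcal{V}$ lies below one of them. That is clear, since each element of $\mathcal{V}_j$ is a subset of $A\setminus\{a_j\}$.
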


\begin{example}
See Figure \ref{Fig:ex_sp_mg_vine} for an example of splitting and merging of regular vines. 
The regular vine $\mathcal{V}$ splits into $\mathcal{V}_{1}, \mathcal{V}_{2}$, and $\mathcal{V}^{\prime}$. 
Conversely, $\mathcal{V}$ can be recovered by merging $\mathcal{V}_{1}$ and $\mathcal{V}_{2}$ along $\mathcal{V}^{\prime}$. 
\begin{figure}[htbp]
\centering
\begin{tikzpicture}[baseline=0, scale=0.7, transform shape]
\draw (0, 0) node[c](0){$a$};
\draw (1, 0) node[c](1){$b$};
\draw (2, 0) node[c](2){$c$};
\draw (3, 0) node[c](3){$d$};
\draw (4, 0) node[c](4){$e$};
\draw (0.5, 0.865) node[v](01){};
\draw (0)--(01);
\draw (1)--(01);
\draw (1.5, 0.865) node[v](12){};
\draw (1)--(12);
\draw (2)--(12);
\draw (2.5, 0.865) node[v](23){};
\draw (2)--(23);
\draw (3)--(23);
\draw (3.5, 0.865) node[v](24){};
\draw (2)--(24);
\draw (4)--(24);
\draw (1.0, 1.730) node[v](012){};
\draw (01)--(012);
\draw (12)--(012);
\draw (2.0, 1.730) node[v](123){};
\draw (12)--(123);
\draw (23)--(123);
\draw (3.0, 1.730) node[v](234){};
\draw (23)--(234);
\draw (24)--(234);
\draw (1.5, 2.595) node[v](0123){};
\draw (012)--(0123);
\draw (123)--(0123);
\draw (2.5, 2.595) node[v](1234){};
\draw (123)--(1234);
\draw (234)--(1234);
\draw (2.0, 3.460) node[v](01234){};
\draw (0123)--(01234);
\draw (1234)--(01234);
\draw (2,-1) node{\scalebox{1.428}{$\mathcal{V}$}};
\end{tikzpicture}
\qquad
\begin{tikzpicture}[baseline=0, scale=0.7, transform shape]
\draw (0, 0) node[c](0){$a$};
\draw (1, 0) node[c](1){$b$};
\draw (2, 0) node[c](2){$c$};
\draw (3, 0) node[c](3){$d$};
\draw (0.5, 0.865) node[v](01){};
\draw (0)--(01);
\draw (1)--(01);
\draw (1.5, 0.865) node[v](12){};
\draw (1)--(12);
\draw (2)--(12);
\draw (2.5, 0.865) node[v](23){};
\draw (2)--(23);
\draw (3)--(23);
\draw (1.0, 1.730) node[v](012){};
\draw (01)--(012);
\draw (12)--(012);
\draw (2.0, 1.730) node[v](123){};
\draw (12)--(123);
\draw (23)--(123);
\draw (1.5, 2.595) node[v](0123){};
\draw (012)--(0123);
\draw (123)--(0123);

\draw (1.5,-1) node{\scalebox{1.428}{$\mathcal{V}_{1}$}};
\end{tikzpicture}
\qquad
\begin{tikzpicture}[baseline=0, scale=0.7, transform shape]
\draw (1, 0) node[c](1){$b$};
\draw (2, 0) node[c](2){$c$};
\draw (3, 0) node[c](3){$d$};
\draw (4, 0) node[c](4){$e$};
\draw (1.5, 0.865) node[v](12){};
\draw (1)--(12);
\draw (2)--(12);
\draw (2.5, 0.865) node[v](23){};
\draw (2)--(23);
\draw (3)--(23);
\draw (3.5, 0.865) node[v](24){};
\draw (2)--(24);
\draw (4)--(24);
\draw (2.0, 1.730) node[v](123){};
\draw (12)--(123);
\draw (23)--(123);
\draw (3.0, 1.730) node[v](234){};
\draw (23)--(234);
\draw (24)--(234);

\draw (2.5, 2.595) node[v](1234){};
\draw (123)--(1234);
\draw (234)--(1234);

\draw (2.5,-1) node{\scalebox{1.428}{$\mathcal{V}_{2}$}};
\end{tikzpicture}
\qquad
\begin{tikzpicture}[baseline=0, scale=0.7, transform shape]

\draw (1, 0) node[c](1){$b$};
\draw (2, 0) node[c](2){$c$};
\draw (3, 0) node[c](3){$d$};

\draw (1.5, 0.865) node[v](12){};
\draw (1)--(12);
\draw (2)--(12);
\draw (2.5, 0.865) node[v](23){};
\draw (2)--(23);
\draw (3)--(23);

\draw (2.0, 1.730) node[v](123){};
\draw (12)--(123);
\draw (23)--(123);

\draw (2,-1) node{\scalebox{1.428}{$\mathcal{V}^{\prime}$}};
\end{tikzpicture}
\caption{Example of splitting and merging of regular vines}
\label{Fig:ex_sp_mg_vine}
\end{figure}
\end{example}

When we consider merging regular vines, the following observation will be useful.

\begin{lemma}\label{lem:v'=v1capv2}
Let $A$ be a finite set with $|A| \geq 3$ and let $a_{1}, a_{2}$ be distinct elements in $A$. 
Assume that $\mathcal{V}_{i}$ is a regular vine on $A\setminus\{a_{i}\}$ for $i \in \{1,2\}$ and $\mathcal{V}^{\prime}$ is a regular vine on $A\setminus\{a_{1},a_{2}\}$ such that $\mathcal{V}^{\prime} \subseteq \mathcal{V}_{1} \cap \mathcal{V}_{2}$. 
Then $\mathcal{V}^{\prime} = \mathcal{V}_{1} \cap \mathcal{V}_{2}$. 
\end{lemma}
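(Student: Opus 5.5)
Since $\mathcal{V}' \subseteq \mathcal{V}_1\cap\mathcal{V}_2$ is given, only the reverse inclusion needs proof. The idea is a counting argument by rank. Every element of $\mathcal{V}_1\cap\mathcal{V}_2$ is a subset of $(A\setminus\{a_1\})\cap(A\setminus\{a_2\}) = A\setminus\{a_1,a_2\}$. Put $m \coloneqq |A|-2 \ge 1$. For each $1\le i\le m$, Proposition~\ref{prop:card-R-vine} applied to the vine $\mathcal{V}'$ on $A\setminus\{a_1,a_2\}$ gives $|\mathcal{V}'(i)| = m+1-i$.

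Fix a rank $i$ with $1 \le i \le m$. Consider the collection $\mathcal{V}_1(i)\cap 2^{A\setminus\{a_1,a_2\}}$ of rank-$i$ elements of $\mathcal{V}_1$ avoiding $a_2$. It contains $\mathcal{V}'(i)$, so it has at least $m+1-i$ elements. I want to show it has exactly $m+1-i$ elements.

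The natural tool is the $i$-th associated tree $T_i$ of $\mathcal{V}_1$. It is a tree on $\mathcal{V}_1(i)$ with $m+2-i$ vertices, and its edges are the elements of $\mathcal{V}_1(i+1)$. Suppose every element of $\mathcal{V}_1(i)$ avoided $a_2$. Then every element of $\mathcal{V}_1(i+1)$, being a union of two such elements, would also avoid $a_2$. Inductively, so would the top element $A\setminus\{a_1\}$, which is a contradiction since $a_2 \in A\setminus\{a_1\}$. Hence at least one element of $\mathcal{V}_1(i)$ contains $a_2$, and the number avoiding $a_2$ is at most $m+1-i$.

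Combining the two bounds gives equality, so $\mathcal{V}_1(i)\cap 2^{A\setminus\{a_1,a_2\}} = \mathcal{V}'(i)$. Taking the union over all ranks $i\le m$ yields $\mathcal{V}_1\cap 2^{A\setminus\{a_1,a_2\}} = \mathcal{V}'$. Since $\mathcal{V}_1\cap\mathcal{V}_2 \subseteq \mathcal{V}_1\cap 2^{A\setminus\{a_1,a_2\}}$, we get $\mathcal{V}_1\cap\mathcal{V}_2 \subseteq \mathcal{V}'$, which is the reverse inclusion.

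The argument uses only $\mathcal{V}_1$ together with the containment $\mathcal{V}'\subseteq\mathcal{V}_1$. The symmetric statement for $\mathcal{V}_2$ follows in the same way, although it is not needed.

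The one delicate step is the upward-closure argument showing that some element of each rank $i\le m$ contains $a_2$. Equivalently, I must rule out that all $m+2-i$ elements of $\mathcal{V}_1(i)$ lie inside the $m$-element set $A\setminus\{a_1,a_2\}$. The argument relies on the fact that every element of rank $j+1$ is the union of the two elements it covers. This holds because $\mathcal{V}_1$ is an induced subposet of the Boolean lattice, and Proposition~\ref{prop:atom} shows each element is determined by the minimal elements below it.

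Concretely, I would justify that an element of $\mathcal{V}_1(j+1)$ is the union of the two rank-$j$ elements it covers. Each non-minimal element covers exactly two elements, and the two are distinct subsets of size $j$, so their union has size at least $j+1$. The union is also contained in the covering element, which has size $j+1$, so the union equals it. With this fact the induction up to rank $m+1$ goes through, and the counting closes the proof.
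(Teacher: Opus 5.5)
Your proof is correct and is essentially the paper's argument: compare rank by rank using Proposition~\ref{prop:card-R-vine}, noting that every rank of $\mathcal{V}_{1}$ contains an element containing $a_{2}$. Hence at most $|\mathcal{V}_{1}(j)|-1 = |\mathcal{V}'(j)|$ elements of rank $j$ can avoid $a_{2}$ (the paper bounds $|\mathcal{V}_{1}(j)\cap\mathcal{V}_{2}(j)|$ directly, which amounts to the same thing). Your union-of-covered-elements justification of that step is sound, though a saturated chain from the minimal element $\{a_{2}\}$ up to $A\setminus\{a_{1}\}$ gives it more quickly, and the associated tree $T_i$ plays no real role.
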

\begin{proof}
Let $n \coloneqq |A|$. 
For $i \in \{1,2\}$, by Proposition \ref{prop:card-R-vine}, the top element of $\mathcal{V}_{i}$ is $A\setminus\{i\}$. 
Hence, for each rank, $\mathcal{V}_{i}$ has at least one element containing $a_{3-i}$. 
Therefore, for each rank $j$, 
\begin{align*}
|\mathcal{V}_{1}(j) \cap \mathcal{V}_{2}(j)| 
\leq |\mathcal{V}_{1}(j)|-1 
= n-j 
= |\mathcal{V}^{\prime}(j)|. 
\end{align*}
Therefore, $\mathcal{V}^{\prime} = \mathcal{V}_{1} \cap \mathcal{V}_{2}$. 
\end{proof}

Now we define two important families of regular vines.

\begin{definition}[D-vines and C-vines]
\label{def:DC-vine}
A regular vine is called a \tbf{D-vine} (resp.~\tbf{C-vine}) if each associated tree is a path (resp.~star) graph.
\end{definition}

D-vines and C-vines can be regarded as the extreme cases of regular vines. 

\begin{remark}
\label{rem:Dvine-typeA}
Since the line graph of a path graph is again a path graph of length smaller by one, a D-vine is unique up to isomorphism. 
Hence, after a suitable relabeling, a D-vine $\mathcal{V}$ is given by 
\begin{align*}
\mathcal{V}(k) = \Set{\{a_{i}, a_{i+1}, \dots, a_{i+k-1}\} \mid 1 \leq i \leq n-k+1}
\end{align*}
for each $k \in \{1, \dots, n\}$. 
In particular, a D-vine is isomorphic to the root poset of a type~$A$ root system (see \cite[Remark~4.16]{TTT24}). 
Similarly, a C-vine is also unique up to isomorphism, by the symmetry of star graphs. 
\end{remark}

\begin{proposition}
\label{prop:splitting-Dvine}
If $\mathcal{V}$ is a D-vine (resp. C-vine), then the vines $\mathcal{V}_{1}, \mathcal{V}_{2}$, and $\mathcal{V}'$ from Lemma \ref{lem:splitting-vine} are D-vines (resp. C-vines).
\end{proposition}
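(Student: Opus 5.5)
The key observation is that the associated trees of the sub-vines $\mathcal{V}_{i}$ and $\mathcal{V}'$ are subtrees of the associated trees of $\mathcal{V}$, and a subtree of a path is a path, and a subtree of a star is a star. Since $\mathcal{V}_{1}$, $\mathcal{V}_{2}$, and $\mathcal{V}'$ are already known to be regular vines by Lemma~\ref{lem:splitting-vine}, it only remains to check the shape of each associated tree.

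First I will make the inclusion of trees precise. Since $\mathcal{V}_i$ is a principal ideal of $\mathcal{V}$, for any $S \in \mathcal{V}_i(j+1)$ the two elements covered by $S$ in $\mathcal{V}_i$ are exactly the two elements covered by $S$ in $\mathcal{V}$: both lie below $S$, hence in the ideal. Therefore the $j$-th associated tree of $\mathcal{V}_i$ is the subgraph of the $j$-th associated tree $T_j$ of $\mathcal{V}$ with vertex set $\mathcal{V}_i(j)$ and edge set $\mathcal{V}_i(j+1)$. It is itself a tree because $\mathcal{V}_i$ is a regular vine, so it is a connected subgraph of $T_j$. The same argument applies to $\mathcal{V}' = \mathcal{V}_1 \cap \mathcal{V}_2$, which is an ideal of $\mathcal{V}$, and whose associated trees are connected subgraphs of those of $\mathcal{V}$.

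Next comes the graph-theoretic step. A connected subgraph of a path has maximum degree at most $2$ and is acyclic, so it is a path. For stars the argument needs a little care. A connected subgraph $T'$ of a star $T_j$ with at least two vertices contains an edge, and every edge of $T_j$ is incident to the center. Since $T'$ is connected, every edge of $T'$ is incident to the center, so $T'$ is a star. When $T'$ has one or two vertices it is both a path and a star, consistent with the usual convention for $K_1$ and $K_2$. I do not expect any real obstacle here.

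The only potential subtlety is the star case at small ranks, where $T_j$ is $K_1$ or $K_2$; these graphs are trivially stars, so the degenerate cases cause no trouble. Combining the two steps, every associated tree of $\mathcal{V}_1$, $\mathcal{V}_2$, and $\mathcal{V}'$ is a path (respectively a star). Hence these vines are D-vines (respectively C-vines).
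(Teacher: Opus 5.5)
Your proof is correct and follows the paper's argument: the associated trees of $\mathcal{V}_1$, $\mathcal{V}_2$, and $\mathcal{V}'$ are subtrees of those of $\mathcal{V}$, which you justify explicitly through the ideal property where the paper leaves it implicit. Subtrees of paths are then paths, and subtrees of stars contain the center and so are stars; one small wording fix is that edges of $T'$ meet the center because they are edges of $T_j$, while connectivity is what rules out isolated non-center vertices.
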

\begin{proof}
First, suppose that $\mathcal{V}$ is a D-vine. 
Then its associated trees are paths. 
Therefore, the associated trees of $\mathcal{V}_{1}, \mathcal{V}_{2}$, and $\mathcal{V}'$ are paths. 
Hence, they are D-vines. 

Next, suppose that $\mathcal{V}$ is a C-vine. 
Then each associated tree is a star. 
Therefore, at each rank of $\mathcal{V}$, there exists a unique element that is covered by all elements of rank one higher. 
Hence, $\mathcal{V}_{1}, \mathcal{V}_{2}$, and $\mathcal{V}'$ contains the center of star at each rank. 
Thus, they are C-vines. 
\end{proof}

D-vines can also be characterized by the existence of certain maximal chains.

\begin{proposition}
\label{prop:Dvine-chain}
Suppose that $n = |A| \geq 2$. 
A regular vine $\mathcal{V}$ on $A$ is a D-vine if and only if $\mathcal{V}$ has two maximal chains 
\[
\{c_1\} \subseteq \{c_1,c_2\} \subseteq \cdots \subseteq \{c_{1}, \dots, c_{n}\}
\quad\text{and}\quad
\{c_n\} \subseteq \{c_{n-1},c_n\} \subseteq \cdots \subseteq \{c_{1}, \dots, c_{n}\}
\]
after a suitable relabeling of the elements of $A$.
\end{proposition}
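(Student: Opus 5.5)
The plan is to prove the forward direction from the explicit model in Remark~\ref{rem:Dvine-typeA}, and the converse by induction on $n$ using the splitting lemma, Lemma~\ref{lem:splitting-vine}.

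\emph{Forward direction.} If $\mathcal{V}$ is a D-vine, relabel as in Remark~\ref{rem:Dvine-typeA}, so that
\[
\mathcal{V}(k)=\Set{\{c_i,\dots,c_{i+k-1}\} \mid 1\le i\le n-k+1}.
\]
Both the prefix intervals $\{c_1,\dots,c_k\}$ and the suffix intervals $\{c_{n-k+1},\dots,c_n\}$ belong to $\mathcal{V}$. Each prefix interval is covered by the next one, and likewise for suffixes. This gives the two required maximal chains.

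\emph{Converse, setup.} Assume $\mathcal{V}$ contains the two chains. We show by induction on $n$ that every associated tree is a path. For $n\le 3$ this is immediate, since every tree on at most $3$ vertices is a path. It is also automatic that the associated tree on $\mathcal{V}(n-1)$, which has two vertices, is a path.

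\emph{Converse, inductive step.} The top element $A$ covers $\{c_1,\dots,c_{n-1}\}$, from the first chain, and $\{c_2,\dots,c_n\}$, from the second. These are distinct when $n\ge2$. Since $A$ covers exactly two elements, they are $A\setminus\{c_n\}$ and $A\setminus\{c_1\}$. Take $a_1=c_n$ and $a_2=c_1$ in Lemma~\ref{lem:splitting-vine}. Then $\mathcal{V}_1$ is a regular vine on $\{c_1,\dots,c_{n-1}\}$ and $\mathcal{V}_2$ is a regular vine on $\{c_2,\dots,c_n\}$.

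\emph{Chains in the pieces.} $\mathcal{V}_1$ contains the prefix chain truncated at $\{c_1,\dots,c_{n-1}\}$. It must also contain a suffix-type chain on $c_1,\dots,c_{n-1}$. To get it, apply the same argument to $\mathcal{V}_1$: its top element covers $\{c_1,\dots,c_{n-2}\}$ together with one other element. That other element need not be $\{c_2,\dots,c_{n-1}\}$ unless proximity forces it.

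\emph{Main obstacle and proposed fix.} Because of this gap, I would instead argue directly on the associated trees rather than only through the pieces. The elements $P_k=\{c_1,\dots,c_k\}$ and $S_k=\{c_{n-k+1},\dots,c_n\}$ are two leaves-to-be in the tree on $\mathcal{V}(k)$. I would show they are the only leaves, so the tree is a path.

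\emph{Leaf count via induction.} The tree $T_k$ on $\mathcal{V}(k)$ has $n-k+1$ vertices. Removing the top element splits $\mathcal{V}$ into $\mathcal{V}_1\cup\mathcal{V}_2$. Each of $\mathcal{V}_1$ and $\mathcal{V}_2$ contains one of the two chains, and their intersection $\mathcal{V}'$ is a regular vine on $\{c_2,\dots,c_{n-1}\}$. I would strengthen the induction hypothesis to: \emph{a regular vine containing one full chain of the given form whose ranked sets are all leaves is a D-vine.} Alternatively, I would use Proposition~\ref{prop:atom}: each element of rank at least $2$ is the join of a unique pair of singletons. Applied to $P_k$ this pair must be $\{c_1,c_k\}$, and applied to $S_k$ it is $\{c_{n-k+1},c_n\}$. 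Combined with the proximity axiom, this should show that the tree on $\mathcal{V}(k)$ restricted to $\mathcal{V}_1(k)$ and to $\mathcal{V}_2(k)$ gives two paths, sharing the path on $\mathcal{V}'(k)$, with endpoints $P_k$ and $S_k$ respectively. Their union is then a path.

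I expect the hardest step to be verifying that $\mathcal{V}_1$ and $\mathcal{V}_2$ themselves inherit the two-chain property, which is needed for the induction. My first attempt would be to show that $\{c_2,\dots,c_{n-1}\}\in\mathcal{V}$. The two coatoms of $A$ are both covered by $A$, so by proximity they cover a common element. That common element is their intersection, which lies in $\mathcal{V}'$ and is exactly $\{c_2,\dots,c_{n-1}\}$. Iterating this argument downward gives the missing chains in $\mathcal{V}_1$ and $\mathcal{V}_2$, which closes the induction.
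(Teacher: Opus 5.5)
\textbf{There is a genuine gap in the assembly step.}

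Your proximity iteration in the last paragraph is correct. Applying proximity to $\{c_j,\dots,c_{n-1}\}$ and $\{c_{j+1},\dots,c_n\}$, both covered by $\{c_j,\dots,c_n\}$, gives $\{c_{j+1},\dots,c_{n-1}\}\in\mathcal{V}$. So the two coatom ideals inherit the two-chain property and are D-vines by induction. The problem is that this does \emph{not} close the induction: both coatom ideals being D-vines does not force $\mathcal{V}$ to be a D-vine.

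A concrete example is the C-vine $\mathcal{V}=\{a,b,c,d,ab,bc,bd,abc,bcd,abcd\}$, writing $ab$ for $\{a,b\}$ and so on. Both coatom ideals live on three elements, so they are D-vines, yet $T_1$ is a star centred at $b$. In your gluing picture with $k=1$, $T_1(\mathcal{V}')$ is the single edge between $\{b\}$ and $\{c\}$. The vertices $\{a\}$ and $\{d\}$ are indeed endpoints of the associated paths of the ideals generated by $abc$ and $bcd$. But both hang off the same end $\{b\}$, so ``their union is then a path'' fails.

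What you actually need is that $P_k$ and $S_k$ are leaves of $T_k(\mathcal{V})$ attached to \emph{opposite} ends of the path $T_k(\mathcal{V}')$. Equivalently, their second lower covers must be $\{c_2,\dots,c_k\}$ and $\{c_{n-k+1},\dots,c_{n-1}\}$. The ``this should show'' sentence never establishes this; the claimed pair $\{c_1,c_k\}$ for $P_k$ already presupposes it.

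Your alternative strengthening also fails. The claim that a vine with one maximal chain consisting of leaves is a D-vine is false: in the example, $\{a\}\subset\{a,b\}\subset\{a,b,c\}\subset A$ is such a chain. In fact every regular vine has one, namely the elements containing a vertex $a_1$ with $A\setminus\{a_1\}$ a coatom. By Proposition~\ref{prop:card-R-vine} there is exactly one such element per rank, and each is covered only by the next one.

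\textbf{How to repair it.} The missing step follows from your own idea, and the induction then becomes unnecessary: iterate proximity over all adjacent intervals. Argue by descending induction on $m$, assuming all intervals of sizes $m+1$ and $m+2$ lie in $\mathcal{V}$. For $2\le i\le n-m$, the intervals $\{c_{i-1},\dots,c_{i+m-1}\}$ and $\{c_i,\dots,c_{i+m}\}$ are both covered by $\{c_{i-1},\dots,c_{i+m}\}$. By proximity they cover a common element, which must be their intersection $\{c_i,\dots,c_{i+m-1}\}$. Together with the given prefix and suffix, this yields all $n-m+1$ intervals of size $m$. By Proposition~\ref{prop:card-R-vine} these exhaust $\mathcal{V}(m)$. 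Hence $\mathcal{V}$ is exactly the interval vine of Remark~\ref{rem:Dvine-typeA}, whose associated trees are visibly paths.

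\textbf{The paper's route.} The paper instead splits off $c_1$ alone. The elements containing $c_1$ are one per rank, so they are the $P_k$, and each $P_k$ is a leaf. The other lower cover of $P_k$ avoids $c_1$, hence equals $\{c_2,\dots,c_k\}$. So the ideal of $A\setminus\{c_1\}$ has both chains for the order $c_2,\dots,c_n$. An induction that also records that the prefix-chain elements are leaves then shows $P_k$ is attached at an endpoint of the path.
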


\begin{proof}
By Remark~\ref{rem:Dvine-typeA}, every D-vine has such a pair of maximal chains. 
We prove the converse.

Suppose that a regular vine $\mathcal{V}$ has two maximal chains 
\begin{align*}
\{c_1\} \subseteq \{c_1,c_2\} \subseteq \cdots \subseteq \{c_{1}, \dots, c_{n}\} = A, \\
\{c_n\} \subseteq \{c_{n-1},c_n\} \subseteq \cdots \subseteq \{c_{1}, \dots, c_{n}\} = A. 
\end{align*}

Let $a_{1} \coloneqq c_{1}$ and $a_{2} \coloneqq c_{n}$. 
Then $A$ covers both $A\setminus\{a_{1}\}$ and $A\setminus\{a_{2}\}$. 
Let $\mathcal{V}_{1}$ be the ideal generated by $A\setminus\{a_{1}\}$, and let 
\[
\mathcal{C} \coloneqq \mathcal{V}\setminus\mathcal{V}_{1}. 
\]
By Proposition~\ref{prop:card-R-vine}, the set $\mathcal{C}$ contains exactly one element of each rank. 
Since every non-minimal element of $\mathcal{C}$ contains $a_{1}$ and covers exactly two elements in $\mathcal{V}$, it follows that $\mathcal{C}$ forms a maximal chain of $\mathcal{V}$ with minimal element $\{a_{1}\}$. 

We show by induction on $n$ that $\mathcal{V}$ is a D-vine and that every element of $\mathcal{C}$ is a leaf in the corresponding associated tree. 
The case $n=2$ is immediate, so assume that $n \geq 3$. 

Since every element of $\mathcal{C}$ contains $a_{1}=c_{1}$, the chain $\mathcal{C}$ coincides with 
\[
\{c_1\} \subseteq \{c_1,c_2\} \subseteq \cdots \subseteq \{c_{1}, \dots, c_{n}\} = A. 
\]
Moreover, because every non-minimal element of $\mathcal{C}$ covers exactly two elements, we obtain a maximal chain
\begin{align*}
\{c_{2}\} \subseteq \{c_{2}, c_{3}\} \subseteq \dots \subseteq \{c_{2}, \dots, c_{n}\} = A\setminus\{a_{1}\}
\end{align*}
in $\mathcal{V}_{1}$. 

Therefore, by the induction hypothesis, $\mathcal{V}_{1}$ is a D-vine, and all elements in the above chain are leaves in the associated paths of $\mathcal{V}_{1}$. 
Each element of $\mathcal{C}$ is adjacent to an element of this chain in the associated tree of $\mathcal{V}$. 
Hence, $\mathcal{V}$ is the desired D-vine.
\end{proof}

\subsection{Maximal Arrow's single-peaked domains}
\label{sec:MASPD}

We begin this section by recalling the definitions of Condorcet and single-peaked domains together with some of their basic properties. 
Our presentation is slightly different from, but equivalent to, the one given in the introduction (see, e.g., \cite{M09}).

Let \(A\) be a finite set of \(n\) elements, called \textbf{alternatives} (or \textbf{candidates}), and let \(\mathcal{L}(A)\) denote the set of all bijections from \([n] \coloneqq \{1, \dots, n\}\) onto \(A\).  
Each \(\omega \in \mathcal{L}(A)\) naturally induces a linear order \(>_{\omega}\) on \(A\), defined by
\[
\omega(1) >_{\omega} \omega(2) >_{\omega} \cdots >_{\omega} \omega(n).
\]
An element of \(\mathcal{L}(A)\) is called a \textbf{preference}.  
We will write a preference \(\omega \in \mathcal{L}(A)\) by juxtaposition
\[
\omega = \omega(1)\omega(2)\cdots\omega(n). 
\]
In particular, \(\omega(1)\) and \(\omega(n)\) represent the first-ranked and last-ranked alternatives, respectively.  

A subset \(\mathcal{D} \subseteq \mathcal{L}(A)\) is called a \textbf{domain} of preferences.

\begin{definition}[Condorcet domains]
Let $\mathcal{D} \subseteq \mathcal{L}(A)$ be a domain. 
A triple of preferences $\omega_1,\omega_2,\omega_3 \in \mathcal{D}$  is called a \textbf{Condorcet cycle} if there exists a triple $T=\{a,b,c\}\subseteq A$ of alternatives such that
\[
a >_{\omega_1} b >_{\omega_1} c, \qquad
b >_{\omega_2} c >_{\omega_2} a, \qquad
c >_{\omega_3} a >_{\omega_3} b .
\]
A domain $\mathcal{D}$ is called a \textbf{Condorcet domain} if it contains no Condorcet cycles.
\end{definition}

Characterizing Condorcet domains is a challenging problem and remains open in general. 
Several important subclasses of Condorcet domains are known.

\begin{definition}[{\cite{B48}}]
A domain $\mathcal{D} \subseteq \mathcal{L}(A)$ is called a \textbf{Black's single-peaked domain (BSPD)} if there exists a path graph $P$ with vertex set $A$ such that for every   $\omega \in \mathcal{D}$ and every pair of distinct  $a,b \in A$ we have
\[
a >_{\omega} b
\]
whenever $a$ lies on the unique path in $P$ from $\omega(1)$ to $b$.
\end{definition}

\begin{remark}
\label{rem:BSPD}
The path $P$ is often referred to as the \emph{societal axis} (or \emph{political line}) on $A$. 
Intuitively, one may visualize $P$ as a horizontal axis ordering the alternatives. 
For a given preference $\omega$, the vertical axis represents the ranking position of each alternative. 
The graph of $\omega$ then has a single-peaked shape: starting from the most preferred alternative (the peak), the preference strictly decreases as one moves away from the peak along the axis in either direction (see Figure~\ref{fig:BSPD} for an example). 
\end{remark}

\begin{definition}
Let $S \subseteq A$. 
Each $\omega \in \mathcal{L}(A)$ induces a natural linear order $\omega_S$ on $S$: if $a, b \in S$, then $a >_\omega b$ if and only if $a >_{\omega_S} b$.  
The \textbf{restriction} of the domain $\mathcal{D}$ to $S$ is defined by
\[
\mathcal{D}_S \coloneqq 
\Set{\omega_S \in \mathcal{L}(S) | \omega \in \mathcal{D}} .
\]
\end{definition}

\begin{definition}[{\cite{A63}}]
A domain $\mathcal{D} \subseteq \mathcal{L}(A)$ is called an \textbf{Arrow's single-peaked domain (ASPD)} if for every triple $T\subseteq A$, the restriction $\mathcal{D}_T$ is a BSPD.
\end{definition}

An ASPD is sometimes referred to as a \emph{locally} BSPD.
An alternative is called a \textbf{bottom alternative} of a domain if it is ranked last in at least one preference in the domain.
The following proposition gives a convenient characterization of ASPDs.

\begin{proposition}[Never-bottom condition {\cite[Proposition 3.5]{Slinko19}}]
\label{prop:nbc}
A domain $\mathcal{D} \subseteq \mathcal{L}(A)$ is an ASPD if and only if for every triple $T \subseteq A$ there exists $x \in T$ such that $x$ is not a bottom alternative in the restriction $\mathcal{D}_T$.
\end{proposition}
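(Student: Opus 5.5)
The plan is to reduce the statement, via Definition~\ref{def:regular vine; graph}'s social-choice counterpart, to a check on three-element restrictions. First I will identify the BSPDs on a set of size three. A path on three alternatives is determined by its middle vertex, say $x$, so write the path as $y - x - z$. The single-peaked condition then rules out exactly the preferences in which $x$ is ranked last. Indeed, if $\omega(1)=y$, then $x$ lies on the path from $y$ to $z$, which forces $x >_\omega z$. The case $\omega(1)=z$ is symmetric, and if $\omega(1)=x$ there is nothing to check. Conversely, any preference on $T$ in which $x$ is not last satisfies the condition for this path.

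So a domain on a triple $T$ is a BSPD if and only if some $x\in T$ is never ranked last in it. Here $x$ is the middle of the path, and when the condition holds we may simply take the path with $x$ in the middle.

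Second, I will apply this to every triple $T\subseteq A$, using the definition of ASPD directly:
\begin{itemize}
\item $\mathcal{D}$ is an ASPD if and only if each restriction $\mathcal{D}_T$ is a BSPD;
\item by the first step, this holds if and only if each $\mathcal{D}_T$ has an element of $T$ that never appears last in $\mathcal{D}_T$;
\item by definition, that means an element of $T$ which is not a bottom alternative of $\mathcal{D}_T$.
\end{itemize}
Note that the bottom alternatives of $\mathcal{D}_T$ are computed in the restricted orders, not in $\mathcal{D}$ itself.

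The only step needing care is the first: one must check that ``$a$ on the path from $\omega(1)$ to $b$'' yields exactly the single constraint that $x$ is not last, covering the degenerate cases $a=\omega(1)$ and $b=\omega(1)$. This is a routine finite check, so I expect no real obstacle.
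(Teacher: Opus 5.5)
Your proof is correct. The paper gives no proof of its own here; it simply cites Slinko. Your direct check is the standard argument: a path $y - x - z$ on a triple forbids exactly the two orders $yzx$ and $zyx$, in which $x$ is last, and any never-last element can be taken as the middle vertex; you then apply the ASPD definition triple by triple. The only blemish is the opening reference to the regular-vine definition, which plays no role and can be dropped.
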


\begin{theorem}[{\cite{B48,A63,I64}}]
The following strict inclusions hold:
\[
\{\text{BSPDs}\} \subsetneq
\{\text{ASPDs}\}
\subsetneq
\{\text{Condorcet domains}\}.
\]
\end{theorem}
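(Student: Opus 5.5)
The strict inclusions $\{\text{BSPDs}\}\subsetneq\{\text{ASPDs}\}\subsetneq\{\text{Condorcet domains}\}$ split naturally into two containments plus two counterexamples, and I would handle them in that order.

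\emph{BSPDs are ASPDs.} The Black condition is hereditary. Let $\mathcal{D}$ be single-peaked with respect to a path $P$ on $A$, and let $T\subseteq A$ be a triple. Write the elements of $T$ in the order they occur along $P$ as $x<y<z$. For any $\omega\in\mathcal{D}$, the middle element $y$ lies on the unique $P$-path from $\omega(1)$ to $x$ or on the one to $z$: if $\omega(1)$ sits weakly left of $y$, then $y$ is on the path to $z$; otherwise $y$ is on the path to $x$ (when $\omega(1)=y$ it is on both). Hence $y$ is never ranked last in $\omega_T$. By Proposition~\ref{prop:nbc}, $\mathcal{D}$ is an ASPD. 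The same argument also shows directly that $\mathcal{D}_T$ is Black single-peaked on the axis $x-y-z$.

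\emph{ASPDs are Condorcet domains.} Suppose $\omega_1,\omega_2,\omega_3\in\mathcal{D}$ form a Condorcet cycle on $T=\{a,b,c\}$. Then the last-ranked elements of $T$ are $c$ in $\omega_1$, $a$ in $\omega_2$ and $b$ in $\omega_3$. So every element of $T$ is a bottom alternative in $\mathcal{D}_T$, which contradicts Proposition~\ref{prop:nbc}.

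\emph{Strictness.} For the first inclusion I would take $n=4$ and $A=\{a,b,c,d\}$ with the maximal ASPD from the introduction, or something smaller. The aim is a domain whose triple restrictions are each single-peaked, but on axes that cannot be glued into one path on $A$. For instance, use preferences making $b$ the ``middle'' of the triples $\{a,b,c\}$, $\{a,b,d\}$ and $\{b,c,d\}$, while all three of $a,c,d$ appear as tops. This forces $b$ to have three neighbours in any axis, which no path allows. To produce such a domain I would check the introduction's example: the first-ranked alternatives $a,b,c,d$ all occur and $a$ is last in some preference, and I expect the axis to fail. The cleanest verification is to argue that in a BSPD the two endpoints of $P$ are the only possible bottom alternatives on $A$, then exhibit three distinct bottoms on $A$ while keeping the never-bottom condition on triples. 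For the second inclusion, the classical example on three alternatives is the domain $\{abc,\,bca,\,acb,\ldots\}$ chosen so that every element is last somewhere but no cyclic triple occurs. Concretely, $\{abc,\,cba,\,bac,\,bca\}$ fails, so instead I would take $\mathcal{D}=\{abc,\,acb,\,bca\}$: here $c$, $b$ and $a$ are each last once, so the domain is not an ASPD. It also has no Condorcet cycle, since both possible cyclic triples require $cab$ or $bac$, and neither is present.

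The main obstacle is only the bookkeeping in the first strictness example, namely certifying that no path axis works. The bottom-alternative argument (at most two bottoms in a BSPD, namely the ends of $P$) should settle this quickly. The rest is routine.
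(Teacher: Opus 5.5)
You are correct on both inclusions. Your example $\{abc,acb,bca\}$ is also a valid Condorcet domain that is not an ASPD, just like the paper's $\mathcal{D}_2$.

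The gap is in the first strictness example. The certificate you call ``cleanest'' is to exhibit three distinct bottom alternatives on $A$ while keeping the never-bottom condition on triples. Such a domain cannot exist. If $x,y,z$ are each ranked last in some preference of $\mathcal{D}$, then each is last in $\mathcal{D}_{\{x,y,z\}}$, so the never-bottom condition fails on that very triple. Hence every ASPD, exactly like every BSPD, has at most two bottom alternatives, and counting bottoms can never separate the two classes. For the introduction's domain you also only say you ``expect the axis to fail.'' The facts you cite (all four alternatives occur as tops, $a$ is last somewhere) do not bear on that question.

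The domain is in fact not a BSPD, but you must prove it. Any one of the following works.

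\begin{enumerate}[(i)]
\item Make your ``middle'' idea precise. For each of $T=\{a,b,c\}$, $\{a,b,d\}$, $\{b,c,d\}$, the two elements other than $b$ are both bottoms in $\mathcal{D}_T$. The witnesses are $abc,bca$; then $abd,bda$; then $bcd,bdc$. So $b$ is the only never-bottom element of each $\mathcal{D}_T$ and must be the $P$-middle of each triple. Then $a,c,d$ lie pairwise on opposite sides of $b$ along $P$, which is impossible for a path. Being never-bottom alone would not force $b$ into the middle; what forces it is that the other two elements are bottoms.
\item Use the endpoint fact correctly. The bottoms $a$ and $d$ must be the ends of $P$. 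The preference $abcd$ is peaked at the end $a$, so it forces $P=(a,b,c,d)$. But then $dbca$, peaked at the other end, would have to be $dcba$.
\item Use the paper's tools. The domain is a maximal ASPD, so if it were a BSPD it would be a maximal BSPD. Proposition~\ref{prop:BSPD} would then require it to contain some preference together with its reversal, and it does not.
\end{enumerate}
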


\begin{example}
\label{ex:rk3}
Let $A=\{a,b,c\}$.
The domain
\[
\mathcal{D}_1 = \{abc,\, bca,\, cab\}
\]
is not a Condorcet domain since $\mathcal{D}_1$ itself forms a Condorcet cycle.
The domain
\[
\mathcal{D}_2 = \{abc, acb, cab, cba\}
\]
is a Condorcet domain since no Condorcet cycle exists in $\mathcal{D}_2$. 
However, $\mathcal{D}_2$ is not an ASPD (and hence not a BSPD) because every alternative appears as the bottom alternative in at least one preference (see Proposition \ref{prop:nbc}).
The domain
\[
\mathcal{D}_3 = \{abc,bac,bca,cba\}
\]
is a BSPD (and hence an ASPD) with societal axis $P=(a,b,c)$ (see Figure~\ref{fig:BSPD}). 
An example of an ASPD that is not a BSPD is given in Example~\ref{ex:rk4}.

\begin{figure}[h]
\centering
\begin{tikzpicture}[scale=1]
[every node/.style={circle,fill,inner sep=1.5pt}]
% Societal axis
\draw[thick] (-1,0.5) -- (5,0.5);
\node at (0,0.5) [below=4pt] {$a$};
\node at (2,0.6) [below=4pt] {$b$};
\node at (4,0.5) [below=4pt] {$c$};

%\node at (2,-0.8) {Societal axis $P=(a,b,c)$};

% Vertical guide (ranking levels)
\draw[->] (-1,0.5) -- (-1,4);
\node[left] at (-1,3) {1};
\node[left] at (-1,2) {2};
\node[left] at (-1,1) {3};

% bac
\draw[thick,blue]
(0,2) node[circle,fill,inner sep=1.5pt]{} -- 
(2,3.15) node[circle,fill,inner sep=1.5pt]{} -- 
(4,1.15) node[circle,fill,inner sep=1.5pt]{};
\node[left] at (0,2) {$bac$};

% abc
\draw[thick,red]
(0,3.2) node[circle,fill,inner sep=1.5pt]{} -- 
(2,2.1) node[circle,fill,inner sep=1.5pt]{} -- 
(4,1) node[circle,fill,inner sep=1.5pt]{};
\node[left] at (0,3) {$abc$};

% bca
\draw[thick,orange]
(0,1.05) node[circle,fill,inner sep=1.5pt]{} -- 
(2,3) node[circle,fill,inner sep=1.5pt]{} -- 
(4,2) node[circle,fill,inner sep=1.5pt]{};
\node[right] at (4,2) {$bca$};

% cba
\draw[thick,green!60!black]
(0,0.9) node[circle,fill,inner sep=1.5pt]{} -- 
(2,1.95) node[circle,fill,inner sep=1.5pt]{} -- 
(4,3) node[circle,fill,inner sep=1.5pt]{};
\node[above right] at (4,3) {$cba$};

\end{tikzpicture}
\caption{The domain $\mathcal{D}_3=\{abc,bac,bca,cba\}$ as a BSPD with societal axis $P=(a,b,c)$. Each preference is single-peaked along the axis.}
\label{fig:BSPD}
\end{figure}

\end{example}

It is easy to see that every subset of an ASPD (resp.~ BSPD, Condorcet domain) is also an ASPD (resp.~ BSPD, Condorcet domain). 
Thus it is natural to ask when such domains are \emph{maximal}. 
From a voting-theoretic perspective, larger domains allow voters greater freedom in expressing their preferences.

\begin{definition}
An ASPD $\mathcal{D}$ is \textbf{maximal} if
$
\mathcal{D}\cup\{\omega\}
$
is not an ASPD for any $\omega \in \mathcal{L}(A)\setminus \mathcal{D}$.
Maximal BSPDs and maximal Condorcet domains are defined analogously.
\end{definition}

Maximal BSPDs can be characterized as follows.

\begin{proposition}[{\cite[Theorem~1]{Puppe18}, \cite[Corollary~2]{Slinko19}}]
\label{prop:BSPD}
A domain \(\mathcal{D}\) is a maximal BSPD if and only if it is a maximal ASPD and contains two preferences \(\omega, \omega'\) such that \(\omega' = \omega^{\rev}\), where $\omega^{\rev} \coloneqq \omega(n)\omega(n-1)\cdots\omega(1)$ is the reversal of $\omega$. 
\end{proposition}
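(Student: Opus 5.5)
The statement is a cited result (Puppe, Slinko), and the plan is to prove the two directions separately, using the never-bottom characterization of Proposition~\ref{prop:nbc} throughout.

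\emph{Forward direction.} Let $\mathcal{D}$ be a maximal BSPD with societal axis $P=(c_1,\dots,c_n)$. Every BSPD is an ASPD. The preferences $c_1c_2\cdots c_n$ and $c_nc_{n-1}\cdots c_1$ are single-peaked on $P$, so maximality forces both into $\mathcal{D}$, and they are reverses of each other. It remains to show that $\mathcal{D}$ is maximal as an ASPD. For this I will first identify $\mathcal{D}$ as the full set $\mathcal{S}(P)$ of preferences single-peaked on $P$, again by maximality. Now suppose $\mathcal{D}\cup\{\omega\}$ is an ASPD. If $\omega$ is not single-peaked on $P$, then some triple $c_i,c_j,c_k$ with $i<j<k$ has $c_j$ ranked last in $\omega$. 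Inside $\mathcal{S}(P)$ the restriction to this triple already has both $c_i$ and $c_k$ as bottoms, for instance via $c_k\cdots$ and $c_i\cdots$ peaks. Adding $\omega$ makes every element of the triple a bottom, which contradicts Proposition~\ref{prop:nbc}.

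\emph{Reverse direction.} Let $\mathcal{D}$ be a maximal ASPD containing $\omega$ and $\omega^{\rev}$, and relabel so that $\omega=c_1\cdots c_n$. For each triple $c_i,c_j,c_k$ with $i<j<k$, the bottoms in $\omega$ and in $\omega^{\rev}$ are $c_k$ and $c_i$ respectively. So the never-bottom alternative must be $c_j$, that is, no preference in $\mathcal{D}$ ranks the middle element of any triple last. I claim this is exactly the condition of being single-peaked on $P=(c_1,\dots,c_n)$. Indeed, if $c_j$ lies on the path from the peak $\omega'(1)$ to $b$, then the triple $\{\omega'(1),c_j,b\}$ has $c_j$ in the middle. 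Applying the no-middle-bottom condition to this triple, $c_j$ is not ranked last among the three, and since $\omega'(1)$ is ranked first, $b$ must be last. Hence $c_j>b$. A degenerate case occurs when $c_j$ is the peak itself, and it is trivial. This shows $\mathcal{D}\subseteq\mathcal{S}(P)$.

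Since $\mathcal{S}(P)$ is itself an ASPD (by the forward-direction reasoning) and $\mathcal{D}$ is a maximal ASPD, we get $\mathcal{D}=\mathcal{S}(P)$. This set is a BSPD, and it is maximal among BSPDs. To see maximality, note that any BSPD containing $\mathcal{S}(P)$ is an ASPD containing $\mathcal{D}$, so it equals $\mathcal{D}$.

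The main obstacle is the forward direction's claim that $\mathcal{S}(P)$ is a maximal ASPD. This requires checking carefully that, for every triple $i<j<k$, both outer elements actually occur as bottoms in $\mathcal{S}(P)$, using suitable single-peaked preferences with peaks at the two ends. It also requires the elementary equivalence between \emph{single-peaked on $P$} and \emph{no middle element ranked last in any triple}. I would prove that equivalence once as a lemma and use it in both directions.
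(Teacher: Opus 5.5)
Your proof is correct and complete. The paper does not prove this proposition; it simply imports it from Puppe's global characterization and Slinko's corollary, where the result sits inside a broader characterization of the single-peaked domain among Condorcet domains. Your route is a direct, self-contained alternative.

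You use only Proposition~\ref{prop:nbc} and one elementary lemma: a preference is single-peaked on $P=(c_1,\dots,c_n)$ if and only if it never ranks the middle element of an axis-ordered triple last. With these, you show in both directions that the domain equals the full set $\mathcal{S}(P)$.

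\begin{itemize}
\item In the forward direction, maximality among BSPDs forces $\mathcal{D}=\mathcal{S}(P)$. The end-peaked preferences $c_1\cdots c_n$ and $c_n\cdots c_1$ already make both outer elements of every triple bottoms, so adding any non-single-peaked order kills the never-bottom condition.
\item In the reverse direction, $\omega$ and $\omega^{\rev}$ rule out the outer elements as never-bottom candidates, which leaves exactly the single-peakedness condition.
\end{itemize}

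Your approach also yields something slightly stronger. The inclusion $\mathcal{D}\subseteq\mathcal{S}(P)$ uses only the ASPD property and the reversed pair. So any ASPD containing $\omega$ and $\omega^{\rev}$ is a BSPD with axis read off from $\omega$. Maximality is needed only to upgrade this inclusion to equality and to transfer maximality between the two classes.

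The citation, by contrast, buys the stronger literature statements, which characterize single-peakedness among more general domains. The paper does not need these for its purposes.
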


\begin{example}[{\cite[Theorem 3]{Slinko19}}]
\label{ex:rk4}
Up to isomorphism, $\mathcal{D}_3=\{abc,bac,bca,cba\}$ from Example~\ref{ex:rk3} is the unique maximal ASPD (and also maximal BSPD) on $3$ alternatives. 
There are exactly two maximal ASPDs on $4$ alternatives; see Figure \ref{fig:ASPD-rk4}. 
\end{example}

 \begin{figure}[htbp!]
\begin{subfigure}{.4\textwidth}
   \centering
$$\mathcal{D}_{4,1} =
\begin{array}{|cccc|cccc|}
\hline
a &  b & b & c & b &  c & c & d \\
b &  a & c & b & c &  b & d & c \\
c &  c & a & a & d &  d & b & b \\
d & d & d & d & a & a & a & a \\
\hline
\end{array}$$
\end{subfigure}%
\begin{subfigure}{.4\textwidth}
  \centering
$$\mathcal{D}_{4,2} =
\begin{array}{|cccc|cccc|}
\hline
a & c  & b & a & b &  a & a & d \\
c & a & a & b & a &  b & d & a \\
b & b & c & c & d &  d & b & b \\
d & d & d & d & c & c & c & c \\
\hline
\end{array}$$
\end{subfigure}
\caption{Maximal ASPDs on $4$ alternatives. 
The domain $\mathcal{D}_{4,1}$ is a maximal BSPD, whereas $\mathcal{D}_{4,2}$ is not a BSPD.}
\label{fig:ASPD-rk4}
\end{figure}

The following lemma shows that maximal ASPDs admit a recursive structure.

\begin{lemma}[Splitting maximal ASPDs {\cite{Slinko19}}]\label{Slinko}
Let $\mathcal{D} \subseteq \mathcal{L}(A)$ be a maximal ASPD on a set $A$ with $n$ alternatives. 
Then the following hold:
\begin{enumerate}[(1)]
\item $|\mathcal{D}| = 2^{n-1}$. 

\item If $n \ge 2$, then $\mathcal{D}$ has exactly two bottom alternatives. 

\item Assume $n \ge 3$ and let $a_{1},a_{2} \in A$ be the bottom alternatives of $\mathcal{D}$. 
For $i,j \in \{1,2\}$ with $i\ne j$, define
\begin{align*}
\mathcal{D}_{i} &\coloneqq \Set{\omega_{A\setminus \{a_{i}\}} \in \mathcal{L}(A\setminus \{a_{i}\}) | \omega \in \mathcal{D},\, \omega(n) = a_{i}}, \\
\mathcal{D}_{ij} &\coloneqq \Set{\omega_{A\setminus\{a_{i},a_{j}\}} \in \mathcal{L}(A\setminus\{a_{i},a_{j}\}) | \omega \in \mathcal{D}_{i},\, \omega(n-1) = a_{j}}.
\end{align*}
We represent $\mathcal{D}$ by the following diagrams:
\begin{align*}
\mathcal{D} = 
\begin{tikzpicture}[baseline=25]
\draw (0,0) rectangle (6,2);
\draw (0,.5) -- (6,.5);
\draw (3,0) -- (3,2);
\draw (1.5,0.2) node{$a_{1}$};
\draw (4.5,0.2) node{$a_{2}$};
\draw (1.5, 1.2) node{$\mathcal{D}_{1}$};
\draw (4.5, 1.2) node{$\mathcal{D}_{2}$};
\end{tikzpicture}
=
\begin{tikzpicture}[baseline=25]
\draw (0,0) rectangle (6,2);
\draw (0,.5) -- (6,.5);
\draw (3,0) -- (3,2);
\draw (1.5,0.2) node{$a_{1}$};
\draw (4.5,0.2) node{$a_{2}$};
\draw (1.5,.5) -- (1.5,2);
\draw (4.5,.5) -- (4.5,2);
\draw (0,1) -- (6,1);
\draw (2.25,.7) node{$a_{2}$};
\draw (3.75,.7) node{$a_{1}$};
\draw (2.25, 1.5) node{$\mathcal{D}_{12}$};
\draw (3.75, 1.5) node{$\mathcal{D}_{21}$};
\end{tikzpicture}.
\end{align*}
Then $\mathcal{D}_{i}$ and $\mathcal{D}_{ij}$ are all maximal ASPDs. 
Furthermore, $\mathcal{D}_{12} = \mathcal{D}_{21}$, and we will denote this common domain by 
$
\mathcal{D}^{\prime}\coloneqq\mathcal{D}_{12}=\mathcal{D}_{21}.
$

\end{enumerate}
\end{lemma}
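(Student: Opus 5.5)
I would prove the three parts together by induction on $n=|A|$, using only the never-bottom condition (Proposition~\ref{prop:nbc}) and the maximality of $\mathcal{D}$. The cases $n\le 3$ are checked directly; for $n=3$ the unique maximal ASPD up to isomorphism is $\mathcal{D}_3$ of Example~\ref{ex:rk3}, which has $4$ preferences and exactly two bottom alternatives.

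\emph{Step 1: at least two bottom alternatives.} Suppose $\mathcal{D}$ had a single bottom alternative $a$. Then for any $b\neq a$ I would add the preference obtained from some $\omega\in\mathcal{D}$ by swapping $a$ with the last-ranked element among the others, so that $b$ is ranked last. I would check, triple by triple using Proposition~\ref{prop:nbc}, that this preserves the ASPD property; maximality then gives a contradiction. The triples containing $b$ are the only delicate ones. Rather than a naive swap, it is cleaner to use the ``extension'' construction: for $x\in A$, the set $\{\omega' x : \omega'\in\mathcal{E}\}$, with $\mathcal{E}$ an ASPD on $A\setminus\{x\}$, is an ASPD whenever every triple containing $x$ has a never-bottom element other than $x$.

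\emph{Step 2: exactly two bottom alternatives.} Suppose $a_1,a_2,a_3$ are all bottom alternatives. On the triple $\{a_1,a_2,a_3\}$, some $a_i$ must never be last in $\mathcal{D}_T$. But a preference with $a_i$ ranked last overall also has $a_i$ last in its restriction to $T$, a contradiction. Hence there are exactly two bottom alternatives, $a_1$ and $a_2$, which proves~(2).

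\emph{Step 3: the blocks are maximal ASPDs.} Each $\mathcal{D}_i$ is an ASPD, since it is a restriction of a subset of $\mathcal{D}$. To show maximality, take $\sigma\in\mathcal{L}(A\setminus\{a_i\})$ with $\mathcal{D}_i\cup\{\sigma\}$ an ASPD, and show that $\mathcal{D}\cup\{\sigma a_i\}$ is still an ASPD, so $\sigma\in\mathcal{D}_i$. Triples avoiding $a_i$ are handled by the hypothesis on $\mathcal{D}_i$. For a triple $\{a_i,x,y\}$, I would use that $a_i$ is a bottom alternative of $\mathcal{D}$, so the never-bottom element of this triple in $\mathcal{D}$ is $x$ or $y$, say $x$. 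I then need that $x$ is also not last in $\sigma$ restricted to $\{x,y\}$, that is, $\sigma$ ranks $x$ above $y$. This does not follow for arbitrary $\sigma$. I expect this to be the main obstacle, and plan to resolve it by showing that all of $\mathcal{D}_i$ agrees on the pair, and that a $\sigma$ compatible with the triples of $\mathcal{D}_i$ must agree with it as well.

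\emph{Step 4: cardinality and $\mathcal{D}_{12}=\mathcal{D}_{21}$.} Applying the same reasoning inside $\mathcal{D}_i$, the bottom alternatives of $\mathcal{D}_i$ include $a_{3-i}$. Then $\mathcal{D}_{12}$ and $\mathcal{D}_{21}$ are maximal ASPDs on $A\setminus\{a_1,a_2\}$. For any $\tau\in\mathcal{D}_{12}$, the domain $\mathcal{D}\cup\{\tau a_1 a_2\}$ is an ASPD, so $\tau a_1 a_2\in\mathcal{D}$ by maximality; hence $\mathcal{D}_{12}\subseteq\mathcal{D}_{21}$, and equality follows by symmetry. Finally, every preference ends in $a_1$ or $a_2$, so $|\mathcal{D}|=|\mathcal{D}_1|+|\mathcal{D}_2|=2\cdot 2^{n-2}$ by the induction hypothesis, which proves~(1).
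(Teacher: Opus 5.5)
The paper does not prove this lemma; it is quoted from Slinko. Your argument therefore has to stand on its own. Steps 2 and 4 are essentially sound, but Step 3 has a genuine gap, and (1) and the maximality of the $\mathcal{D}_{ij}$ rest on it. Moreover, the obstacle you name is not the real one.

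For a triple $T=\{a_i,x,y\}$ there is nothing to check. The added preference is $\sigma a_i$, whose restriction to $T$ ends in $a_i$. The never-bottom element of $\mathcal{D}_T$ is $x$ or $y$, since $a_i$ is a bottom alternative, so it is automatically not last in $(\sigma a_i)_T$. How $\sigma$ orders $x$ and $y$ is irrelevant.

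The problematic triples are those with $T\subseteq A\setminus\{a_i\}$, which you dismiss as ``handled by the hypothesis on $\mathcal{D}_i$''. They are not. $(\mathcal{D}\cup\{\sigma a_i\})_T$ also contains the restrictions to $T$ of the preferences of $\mathcal{D}$ ending in $a_{3-i}$. The assumption that $\mathcal{D}_i\cup\{\sigma\}$ is an ASPD gives no control over these, so the never-bottom element $z$ of $(\mathcal{D}_i\cup\{\sigma\})_T$ might be ranked last in one of them. This cross-block compatibility is exactly what the paper has to establish separately in the merging direction (Lemma~\ref{merging ASPD}(\ref{merging ASPD 1})), so it cannot be taken for granted.

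The missing idea is a push-down property, proved by the same check you use in Step 4. If $a$ is a bottom alternative of a maximal ASPD $\mathcal{D}$ and $\omega\in\mathcal{D}$, then moving $a$ to the last position of $\omega$ gives an element of $\mathcal{D}$. Triples avoiding $a$ see the same restriction as $\omega$, and on a triple containing $a$ the never-bottom element of $\mathcal{D}_T$ is not $a$.

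Pushing $a_i$ down in the preferences ending in $a_{3-i}$ produces preferences ending in $a_{3-i}a_i$ with the same order on $A'=A\setminus\{a_1,a_2\}$. This has two consequences:
\begin{itemize}
\item $\mathcal{D}_T=(\mathcal{D}_i)_T$ for every triple $T\subseteq A'$, so these triples do follow from the hypothesis.
\item For $T=\{a_{3-i},x,y\}$, the same preferences make $a_{3-i}$ a bottom alternative of $(\mathcal{D}_i)_T$, which forces $z\in\{x,y\}$. Then $z$ is never last in the restriction to $T$ of a preference ending in $a_{3-i}$, since $a_{3-i}$ is last there.
\end{itemize}
This repairs Step 3. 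It also supplies the claim in Step 4 that $a_{3-i}$ is a bottom alternative of $\mathcal{D}_i$, which you assert without proof.

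Finally, Step 1 is not actually carried out. The ``extension'' construction is vacuous: a domain whose preferences all end in $x$ is an ASPD as soon as its restriction to $A\setminus\{x\}$ is, and the construction produces no preference outside $\mathcal{D}$. Your first idea does work, though. If every preference of $\mathcal{D}$ ends in $a$, pick $\omega''ba\in\mathcal{D}$ and add $\omega''ab$. Only triples containing both $a$ and $b$ see a new restriction, and on such a triple the third element is never last, because $a$ is last throughout $\mathcal{D}$.
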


\begin{corollary}
\label{cor:splitting-BSPD}
If \(\mathcal{D}\) is a maximal BSPD, then the domains \(\mathcal{D}_{1}, \mathcal{D}_{2}\) and \(\mathcal{D}'\) from Lemma~\ref{Slinko} are also maximal BSPDs.
\end{corollary}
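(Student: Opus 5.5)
By Lemma~\ref{Slinko}, the domains $\mathcal{D}_1$, $\mathcal{D}_2$ and $\mathcal{D}'$ are already maximal ASPDs. By Proposition~\ref{prop:BSPD}, it therefore suffices to exhibit, in each of them, a pair of mutually reversed preferences. Since $\mathcal{D}$ is a maximal BSPD, Proposition~\ref{prop:BSPD} gives $\omega,\omega^{\rev}\in\mathcal{D}$. The plan is to locate this pair relative to the bottom alternatives $a_1,a_2$ and restrict it.

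First I will identify where $\omega$ and $\omega^{\rev}$ sit. The last entries $\omega(n)$ and $\omega^{\rev}(n)=\omega(1)$ are both bottom alternatives of $\mathcal{D}$, and they are distinct. By Lemma~\ref{Slinko}(2) there are exactly two bottom alternatives, so after swapping names we have $\omega(1)=a_2$ and $\omega(n)=a_1$. Thus $\omega$ lies in the $a_1$-block, with $\omega_{A\setminus\{a_1\}}\in\mathcal{D}_1$, and $\omega^{\rev}$ lies in the $a_2$-block, with $\omega^{\rev}_{A\setminus\{a_2\}}\in\mathcal{D}_2$.

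The remaining work is to produce reversed pairs inside each piece.

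\emph{The domain $\mathcal{D}'$.} The preference $\omega_{A\setminus\{a_1\}}$ ends with $a_2$? No: in $\omega$ the alternative $a_2$ is ranked first. I will therefore use $\omega^{\rev}$ instead. In $\omega^{\rev}$ the last entry is $a_2$ and the second-to-last entry is $\omega(2)$, which need not be $a_1$. So I cannot restrict the original pair directly. Instead I will use maximality: since $\mathcal{D}_1$ is a maximal ASPD containing a preference $\sigma=\omega_{A\setminus\{a_1\}}$ whose top element is $a_2$, I will try to show that it also contains a preference with $a_2$ at the bottom followed by reversals. The cleaner route is induction via the axis. Let $P$ be the societal axis of $\mathcal{D}$. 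Then $P_1$, the path obtained by deleting the endpoint $a_1$ (the bottom alternatives must be the endpoints of $P$), is an axis for $\mathcal{D}_1$. This already shows $\mathcal{D}_1$ is a BSPD. Since $\mathcal{D}_1$ is a maximal ASPD that is single-peaked on $P_1$, it is also a maximal BSPD, because any BSPD is an ASPD.

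To make this rigorous I need two checks. The first is that the bottom alternatives $a_1,a_2$ of a maximal BSPD are exactly the endpoints of $P$. This holds because the last-ranked alternative of any single-peaked preference is an endpoint of $P$; combined with the existence of $\omega$ and $\omega^{\rev}$, both endpoints occur at the bottom. The second is that restriction preserves single-peakedness along the induced path. Deleting an endpoint of $P$ leaves a path, and the condition ``$a$ lies on the path from the peak to $b$'' is inherited, with the peak of the restriction lying on the induced path.

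The same argument applied to $\mathcal{D}_2$ with $P_2=P\setminus\{a_2\}$, and to $\mathcal{D}'$ with $P\setminus\{a_1,a_2\}$, completes the proof.

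The main obstacle is the peak subtlety in the second check. If the peak $\omega(1)$ is the deleted endpoint, I must verify that the new top element is the neighbour of that endpoint on $P$. This follows because the preference decreases away from the peak along $P$.
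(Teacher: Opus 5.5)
Your final argument is correct and is essentially the paper's proof: $a_1,a_2$ are endpoints of the societal axis $P$ because a last-ranked alternative must be an endpoint, restricting to $P\setminus\{a_1\}$ (resp.\ $P\setminus\{a_2\}$, $P\setminus\{a_1,a_2\}$) preserves single-peakedness, and a BSPD that is a maximal ASPD is automatically a maximal BSPD. The opening attempt via reversed pairs and Proposition~\ref{prop:BSPD} is never completed and plays no role in the proof, so delete it from a written version.
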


\begin{proof}
Since $\mathcal{D}$ is a BSPD, there exists a path graph $P$ on $A$ such that every preference in $\mathcal{D}$ is single-peaked with respect to $P$. 
Because $a_{1}$ is a bottom element of $\mathcal{D}$, the vertex $a_{1}$ must be an endpoint of $P$. 
Let $P_{1} \coloneqq P \setminus a_{1}$. 
Then every preference in $\mathcal{D}_{1}$ is single-peaked with respect to $P_{1}$. 
Hence, $\mathcal{D}_{1}$ is a BSPD. 
By Lemma~\ref{Slinko}, $\mathcal{D}_{1}$ is a maximal ASPD, and therefore maximal as a BSPD. 
The same argument applies to $\mathcal{D}_{2}$ and $\mathcal{D}'$.
\end{proof}

We now establish another important structural property of maximal ASPDs.

\begin{lemma}[Merging maximal  ASPDs]\label{merging ASPD}
Let $a_{1},a_{2} \in A$ be distinct alternatives. 
For each $i \in \{1,2\}$, assume that $\mathcal{D}_{i}$ is a maximal ASPD on $A\setminus\{a_{i}\}$ and that $a_{3-i}$ is a bottom alternative of $\mathcal{D}_{i}$. 
We retain the notation of $D_{ij}$ from Lemma~\ref{Slinko}:
\[
\mathcal{D}_{ij} \coloneqq 
\Set{\omega_{A\setminus\{a_{i},a_{j}\}} \in \mathcal{L}(A\setminus\{a_{i},a_{j}\}) 
| \omega \in \mathcal{D}_{i},\, \omega(n-1) = a_{j}}.
\]
Suppose further that $\mathcal{D}_{12}=\mathcal{D}_{21}$, and denote this common domain by 
\[
\mathcal{D}^{\prime}\coloneqq\mathcal{D}_{12}=\mathcal{D}_{21}.
\]
We represent $\mathcal{D}_i \in \mathcal{L}(A\setminus \{a_{i}\})$ by the following diagrams:
\begin{align*}
\mathcal{D}_{1}
&=
\begin{tikzpicture}[baseline=30]
\draw (0,0.5) rectangle (3,2);
\draw (1.5,.5) -- (1.5,2);
\draw (0,1) -- (3,1);
\draw (0.75,.74) node{$b_{1}$};
\draw (2.25,.7) node{$a_{2}$};
\draw (0.75, 1.45) node{$\mathcal{D}_{1}^{\prime}$};
\draw (2.25, 1.5) node{$\mathcal{D}^{\prime}$};
\end{tikzpicture} \ ,
\qquad
\mathcal{D}_{2}
=
\begin{tikzpicture}[baseline=30]
\draw (0,0.5) rectangle (3,2);
\draw (1.5,.5) -- (1.5,2);
\draw (0,1) -- (3,1);
\draw (0.75,.7) node{$a_{1}$};
\draw (2.25,.74) node{$b_{2}$};
\draw (0.75, 1.5) node{$\mathcal{D}^{\prime}$};
\draw (2.25, 1.45) node{$\mathcal{D}_{2}^{\prime}$};
\end{tikzpicture} \ ,
\end{align*}
where $b_1,b_2 \in A\setminus\{a_{1},a_{2}\}$.
Let $T \subseteq A\setminus\{a_{1},a_{2}\}$ be a triple and let $x \in T$. 
Then the following hold:
\begin{enumerate}[(1)]

\item\label{merging ASPD 1}
$x$ is not bottom in $(\mathcal{D}_{1})_{T}$ if and only if $x$ is not bottom in $(\mathcal{D}_{2})_{T}$.

\item\label{merging ASPD 2}
The domain $\mathcal{D}$ on $A$ defined by
\[
\mathcal{D} \coloneqq
\begin{tikzpicture}[baseline=25]
\draw (0,0) rectangle (6,2);
\draw (0,.5) -- (6,.5);
\draw (3,0) -- (3,2);
\draw (1.5,0.2) node{$a_{1}$};
\draw (4.5,0.2) node{$a_{2}$};
\draw (1.5, 1.2) node{$\mathcal{D}_{1}$};
\draw (4.5, 1.2) node{$\mathcal{D}_{2}$};
\end{tikzpicture}
\]
is a maximal ASPD.
\end{enumerate}
\end{lemma}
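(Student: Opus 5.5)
For part (1), I would work with the decompositions of $\mathcal{D}_1$ and $\mathcal{D}_2$ displayed in the statement. By Lemma~\ref{Slinko}(2), $\mathcal{D}_1$ has exactly two bottom alternatives, $a_2$ and some $b_1$. Moreover, $\mathcal{D}_1$ is the disjoint union of the preferences ending in $b_1$ and those ending in $a_2$. Now fix a triple $T \subseteq A\setminus\{a_1,a_2\}$ and restrict $\mathcal{D}_1$ to $T$.

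The preferences ending in $a_2$ restrict to $\mathcal{D}'_T$. A preference ending in $b_1$ restricts to $\omega_T$, where $\omega \in \mathcal{D}_1$ with $\omega(n-1)=b_1$. The key claim is that the bottom set of $(\mathcal{D}_1)_T$ coincides with the bottom set of $(\mathcal{D}')_T$ together with possibly some extra elements. I will argue that these extra elements are in fact already bottoms in $\mathcal{D}'_T$, or else that the triple $T\cup\{a_2\}$ constrains them. The cleaner route is the following.

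Since $(\mathcal{D}_1)_T$ is an ASPD restriction, it has at most two bottoms. Use induction on $n$ via Lemma~\ref{Slinko}(3) applied to $\mathcal{D}_1$, splitting it into $\mathcal{D}_1'$ and $\mathcal{D}'$ with common part $\mathcal{D}_{1}''$. If $b_1\notin T$, the set $T$ lies inside the ground set of $\mathcal{D}'$ and of $\mathcal{D}_1'$. By induction, applying part (1) to the pair $(\mathcal{D}_1',\mathcal{D}')$, which merge to $\mathcal{D}_1$, the non-bottom elements of $T$ in $\mathcal{D}_1'$ and in $\mathcal{D}'$ coincide. Hence $x$ is non-bottom in $(\mathcal{D}_1)_T$ iff $x$ is non-bottom in $\mathcal{D}'_T$. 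If $b_1\in T$, then $b_1$ is bottom in $(\mathcal{D}_1)_T$. Here I must show $x$ is non-bottom in $(\mathcal{D}_1)_T$ iff $x$ is non-bottom in $\mathcal{D}'_T$ and $x\neq b_1$; the case where $\mathcal{D}'_T$ misses some bottom needs care.

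Symmetrically, the same holds for $\mathcal{D}_2$ with $b_2$. So part (1) reduces to comparing with $\mathcal{D}'_T$, plus the extra bottoms $b_1$ (for $\mathcal{D}_1$) and $b_2$ (for $\mathcal{D}_2$). The main obstacle is the case $b_1\in T$ or $b_2\in T$. There I expect to use that $\mathcal{D}'_T$ already has two bottoms, since $\mathcal{D}'$ is maximal on a set containing $T$ and restriction to a triple of a maximal ASPD yields a maximal ASPD on $T$, or at least a domain with two bottoms. I also expect to use the never-bottom condition on the triple $\{b_1,y,a_2\}$ in $\mathcal{D}_1$ to force $b_1$ to already be bottom in $\mathcal{D}'_T$. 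I would first try proving the auxiliary statement: for a maximal ASPD, every restriction to a triple has exactly two bottoms. Then $(\mathcal{D}_1)_T$ and $\mathcal{D}'_T$, both having two bottoms with one contained in the other, have equal bottom sets.

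For part (2), I would check the never-bottom condition (Proposition~\ref{prop:nbc}) on each triple $T\subseteq A$. If $T$ avoids $a_1$, then every preference restricted to $T$ comes either from $\mathcal{D}_1$ (with $a_1$ appended, which is irrelevant) or from $\mathcal{D}_2$ restricted to $T$. If moreover $T$ avoids $a_2$, then part (1) supplies a common non-bottom. If $T\ni a_2$, $T\not\ni a_1$, the $\mathcal{D}_1$ part ends in $a_1$ and restricts into $(\mathcal{D}_2)_T$-type orders. Indeed $(\mathcal{D}_1)_{T}$ with $a_2\in T$ equals $(\mathcal{D}_{1})$ restricted, whose orders restricted to $T\setminus\{a_2\}$ lie in $\mathcal{D}'$, which is contained in $\mathcal{D}_2$'s restriction. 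So a non-bottom of $(\mathcal{D}_2)_T$ works, after checking that $a_2$ is bottom there and hence the witness is not $a_2$. If $T\supseteq\{a_1,a_2\}$, with $T=\{a_1,a_2,y\}$, then $y$ is never bottom because every preference ends in $a_1$ or $a_2$. This shows $\mathcal{D}$ is an ASPD. Finally, $|\mathcal{D}|=2\cdot 2^{n-2}=2^{n-1}$. Maximality then follows from the known fact that ASPDs have size at most $2^{n-1}$, by Slinko's bound underlying Lemma~\ref{Slinko}(1). I would cite that bound if it is available; otherwise I would argue that any added $\omega$ ending in $a_i$ would enlarge $\mathcal{D}_i$, contradicting its maximality, and that one ending elsewhere breaks the triple $\{a_1,a_2,\omega(n)\}$.
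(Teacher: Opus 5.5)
Your reduction of part (1) to comparing each $(\mathcal{D}_i)_T$ with $\mathcal{D}'_T$ matches the paper. So does your induction for $b_1\notin T$ via the pair $(\mathcal{D}_1',\mathcal{D}')$ that merges to $\mathcal{D}_1$; the paper runs the same induction on $(\mathcal{D}',\mathcal{D}_2')$. But the case $b_1\in T$ (resp.\ $b_2\in T$), which you call the main obstacle, is never settled. It is not a corner case: when $|A|=5$ the only triple is $T=A\setminus\{a_1,a_2\}$, which contains $b_1$ and $b_2$, so without it your induction has no base.

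Neither proposed remedy works as stated:
\begin{enumerate}
\item The never-bottom condition on $\{b_1,y,a_2\}$ in $\mathcal{D}_1$ carries no information. Every preference of $\mathcal{D}_1$ ends in $b_1$ or $a_2$, so $y$ is automatically never last there.
\item The auxiliary claim that every triple restriction of a maximal ASPD has exactly two bottoms is only asserted. That such a restriction is again maximal does not follow from the definitions.
\end{enumerate}

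The missing idea is one application of Lemma~\ref{Slinko}(3) to the maximal ASPD $\mathcal{D}_1$, whose bottom alternatives are $b_1$ and $a_2$. The common part $\mathcal{D}_{12}=\mathcal{D}_{21}$ computed inside $\mathcal{D}_1$ is a maximal ASPD, hence nonempty. So the $a_2$-ending half of $\mathcal{D}_1$, which is $\mathcal{D}'$, contains preferences ending in $b_1$; that is, $b_1$ is a bottom alternative of $\mathcal{D}'$. Hence whenever $b_1\in T$, $b_1$ is already bottom in $\mathcal{D}'_T$. The $b_1$-ending preferences of $\mathcal{D}_1$ only make $b_1$ bottom in $(\mathcal{D}_1)_T$, so the bottom sets of $(\mathcal{D}_1)_T$ and $\mathcal{D}'_T$ coincide. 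This is exactly the paper's step ``$b_2$ is a bottom alternative of $\mathcal{D}'$, hence $x\neq b_2$.'' Your auxiliary statement is in fact true and would make part (1) immediate, since $\mathcal{D}'_T\subseteq(\mathcal{D}_i)_T$. It can be proved by induction along the splitting of Lemma~\ref{Slinko}, but that proof is precisely what is missing.

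In part (2), the mixed case has its indices crossed. If $a_2\in T$ and $a_1\notin T$, you invoke a non-bottom of $(\mathcal{D}_2)_T$, but $\mathcal{D}_2$ lives on $A\setminus\{a_2\}$, so this restriction is not defined. The relevant domain is $\mathcal{D}_1$. Take $x$ non-bottom in $(\mathcal{D}_1)_T$; then $x\neq a_2$ because $a_2$ is a bottom alternative of $\mathcal{D}_1$. The $a_2$-ending preferences of $\mathcal{D}$ put $a_2$ last in $T$, so $x$ is never bottom in $\mathcal{D}_T$. With that fix, the remaining cases agree with the paper. The maximality argument also works, either via the bound $2^{n-1}$ from Lemma~\ref{Slinko}(1) or via your direct alternative (extending some $\mathcal{D}_i$, or breaking the triple $\{a_1,a_2,\omega(n)\}$).
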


\begin{proof}
First we prove part~(\ref{merging ASPD 1}) by induction on the number $|A|$ of alternatives. 
By symmetry, it suffices to prove the forward implication.
If $|A| \le 4$, the statement is immediate. 

Suppose $|A| \ge 5$ and let $T \subseteq A\setminus\{a_{1},a_{2}\}$ be a triple. 
Assume that $x \in T$ is not bottom in $(\mathcal{D}_{1})_{T}$. 
Hence $x$ is also not bottom in $(\mathcal{D}^{\prime})_{T}$.
Since $\mathcal{D}_{2}$ is a maximal ASPD, Lemma~\ref{Slinko}(3) implies that $b_{2}$ is a bottom alternative of $\mathcal{D}^{\prime}$. 
Hence $x \neq b_{2}$, because otherwise $x$ would be bottom in $(\mathcal{D}^{\prime})_{T}$.

If $b_{2} \in T$, then $x$ is clearly not bottom in $(\mathcal{D}_{2})_{T}$ since $x \neq b_{2}$. 
Assume now that $b_{2} \notin T$. 
Then necessarily $|A| \ge 6$. 
Again using Lemma~\ref{Slinko}(3), the domains $\mathcal{D}^{\prime}$ and $\mathcal{D}_{2}^{\prime}$ satisfy the same structural conditions as in the statement of the lemma. 
By the induction hypothesis, $x$ is not bottom in $(\mathcal{D}_{2}^{\prime})_{T}$, and therefore also  not bottom in $(\mathcal{D}_{2})_{T}$.

We now prove part~(\ref{merging ASPD 2}). 
Let $T \subseteq A$ be a triple.
First consider the case $a_{1},a_{2} \in T$. 
Then the remaining element $x \in T\setminus\{a_{1},a_{2}\}$ is not bottom in $\mathcal{D}_{T}$, since either $a_{1}$ or $a_{2}$ must occupy the bottom position.
Next suppose that $a_{1},a_{2} \notin T$. 
Since $\mathcal{D}_{1}$ is a maximal ASPD, there exists $x \in T$ that is not bottom in $(\mathcal{D}_{1})_{T}$. 
By part~(\ref{merging ASPD 1}), the same element $x$ is not bottom in $(\mathcal{D}_{2})_{T}$, and hence not bottom in $\mathcal{D}_{T}$.
Finally, suppose that exactly one of $a_{1},a_{2}$ belongs to $T$. 
Without loss of generality assume $a_{1} \in T$ and $a_{2} \notin T$. 
Since $\mathcal{D}_{2}$ is a maximal ASPD, there exists $x \in T$ that is not bottom in $(\mathcal{D}_{2})_{T}$. 
Because $a_{1}$ is a bottom alternative of $\mathcal{D}_{2}$, we must have $x \neq a_{1}$. 
Hence $x$ is not bottom in $\mathcal{D}_{T}$.

Therefore $\mathcal{D}$ is an ASPD. 
Moreover,
\[
|\mathcal{D}| = |\mathcal{D}_{1}| + |\mathcal{D}_{2}|
= 2^{n-2} + 2^{n-2}
= 2^{n-1}.
\]
Thus $\mathcal{D}$ is maximal by Lemma~\ref{Slinko}(1).
\end{proof}

\begin{example}
The maximal ASPD 
\begin{align*}
\mathcal{D} = 
\begin{array}{|cccccccc|cccccccc|}
\hline
a&b&b& \multicolumn{1}{c|}{c} & b&c&c&d & b&c&c& \multicolumn{1}{c|}{d} & c&d&c&e \\
b&a&c& \multicolumn{1}{c|}{b} & c&b&d&c & c&b&d& \multicolumn{1}{c|}{c} & d&c&e&c \\
c&c&a& \multicolumn{1}{c|}{a} & d&d&b&b & d&d&b& \multicolumn{1}{c|}{b} & e&e&d&d \\ \hline
d&d&d& \multicolumn{1}{c|}{d} & a&a&a&a & e&e&e& \multicolumn{1}{c|}{e} & b&b&b&b \\ \hline
e&e&e&e & e&e&e&e & a&a&a&a & a&a&a&a \\ \hline
\end{array}
\end{align*}
splits into $\mathcal{D}_{1}, \mathcal{D}_{2}$, and $\mathcal{D}^{\prime}$, where 
\begin{align*}
\mathcal{D}_{1} = \begin{array}{|cccc|cccc|}
\hline
a&b&b&c & b&c&c&d \\
b&a&c&b & c&b&d&c \\
c&c&a&a & d&d&b&b \\ \hline
d&d&d&d & a&a&a&a \\ \hline
\end{array} \ , 
\qquad
\mathcal{D}_{2} = \begin{array}{|cccc|cccc|}
\hline
b&c&c&d & c&d&c&e \\
c&b&d&c & d&c&e&c \\
d&d&b&b & e&e&d&d \\ \hline
e&e&e&e & b&b&b&b \\ \hline
\end{array} \ , 
\end{align*}
and 
\begin{align*}
\mathcal{D}^{\prime} = \begin{array}{|cccc|}
\hline
b&c&c&d \\
c&b&d&c \\
d&d&b&b \\ \hline
\end{array} \ .
\end{align*}

Conversely, $\mathcal{D}$ can be recovered by merging $\mathcal{D}_{1}$ and $\mathcal{D}_{2}$ along $\mathcal{D}^{\prime}$. 
\end{example}

   %********************************************************************************************************

  %********************************************************************************************************

%********************************************************************************************************
\section{Axiomatic framework from splitting and merging of species}
\label{subsec:GV}

We briefly recall the notion of a combinatorial species; see \cite{BLL98} for background. 
A \textbf{(combinatorial) species} is a functor from the category of finite sets and bijections to itself. 
More precisely, a species $\mathsf{F}$ assigns to each finite set $A$ (the set of labels) a finite set $\mathsf{F}[A]$ (the $\mathsf{F}$-structures on $A$), and to each bijection $h\colon A\longrightarrow B$ a transport map (relabeling)
\[
\mathsf{F}[h]\colon \mathsf{F}[A]\longrightarrow\mathsf{F}[B].
\]
These maps satisfy the functoriality conditions
\[
\mathsf{F}[\mathrm{id}_A]=\mathrm{id}_{\mathsf{F}[A]}, 
\qquad
\mathsf{F}[g\circ h]=\mathsf{F}[g]\circ\mathsf{F}[h]
\]
for all bijections $h \colon A\longrightarrow B$ and $g\colon B\longrightarrow C$.

A \textbf{natural transformation} $\eta \colon \mathsf{F}\longrightarrow\mathsf{G}$ between species consists of component maps
$\eta_A\colon \mathsf{F}[A]\longrightarrow\mathsf{G}[A]$ for all finite sets $A$ satisfying the naturality condition. 
That is, for every bijection $h\colon A\longrightarrow B$ the diagram
\[
\begin{tikzcd}
\mathsf{F}[A] \ar[r,"{\mathsf{F}[h]}"] \ar[d,"\eta_A"'] &
\mathsf{F}[B] \ar[d,"\eta_B"]\\
\mathsf{G}[A] \ar[r,"{\mathsf{G}[h]}"] &
\mathsf{G}[B]
\end{tikzcd}
\]
commutes.  
A natural transformation whose components are bijections is called a \textbf{natural isomorphism}.

As we saw in Section~\ref{sec:Preliminaries}, MAT-labeled complete graphs, regular vines, and maximal ASPDs admit similar splitting and merging structures. 
In this section, we formulate these operations axiomatically in the language of combinatorial species.
We first fix some notation that will be used throughout this section. 
Let $A$ be a finite set. If $a_1,a_2 \in A$ are distinct elements, define
\[
A_i \coloneqq A\setminus\{a_i\} \quad (i=1,2), 
\qquad
A' \coloneqq A\setminus\{a_1,a_2\}.
\]

\begin{definition}
\label{def:KF} 
Throughout this section, let $\mathsf{F}$ be a species satisfying the following conditions:
\begin{enumerate}
\item $\mathsf{F}[A]=A$ whenever $|A|\le 1$;
\item $\mathsf{F}[A]\cap \mathsf{F}[B]=\varnothing$ whenever $A\neq B$.
\end{enumerate}
We define the species $\SpSplit^{\mathsf{F}}$ as follows. 

The set $\SpSplit^{\mathsf{F}}[A]$ is defined recursively as follows: 
\begin{enumerate}[(i)]
\item If $|A|\le 1$, define $\SpSplit^{\mathsf{F}}[A]\coloneqq A$.
\item If $|A| \ge2$, define
\[
\SpSplit^{\mathsf{F}}[A]
\coloneqq 
\Set{\{\ell_1,\ell_2\} | \ell_i\in \mathsf{F}[A_{i}] \text{ for } i \in \{1,2\} \text{, where }\ a_1, a_2 \in A \text{ and }a_{1} \neq a_{2}}. 
\]
Equivalently, $\SpSplit^{\mathsf{F}}[A]$ is the edge set of the complete multipartite graph whose parts are the sets $\mathsf{F}[A\setminus\{a\}]$ for $a\in A$.
\end{enumerate}

Let $h\colon A\longrightarrow B$ be a bijection. 
Define the transport map 
\[
\SpSplit^{\mathsf{F}}[h]\colon \SpSplit^{\mathsf{F}}[A]\longrightarrow\SpSplit^{\mathsf{F}}[B]
\]
recursively as follows:
\begin{enumerate}[(i)]
\item If $|A|\le 1$, define $\SpSplit^{\mathsf{F}}[h]\coloneqq h$.
\item If $|A| \ge2$, define
\[
\SpSplit^{\mathsf{F}}[h](\{\ell_1,\ell_2\})
\coloneqq
\{ \mathsf{F}[h|_{A_1}](\ell_1),
\mathsf{F}[h|_{A_2}](\ell_2)\}, 
\]
where $A_{i}$ denotes the set such that $\ell_{i} \in \mathsf{F}[A_{i}]$ for $i \in \{1,2\}$. 
\end{enumerate}
\end{definition}

We now introduce two properties of the species $\SpSplit^{\mathsf{F}}$, which will play a central role in our constructions.

\begin{definition}
\label{def:prox-merge} 
A natural transformation from the species $\mathsf{F}$ to $\SpSplit^{\mathsf{F}}$ is called a \textbf{splitting}. 

For a splitting $\sigma^{\mathsf{F}}\colon \mathsf{F}\longrightarrow\SpSplit^{\mathsf{F}}$ and a finite set $A$, let $\sigma^{\mathsf{F}}_A\colon \mathsf{F}[A]\longrightarrow\SpSplit^{\mathsf{F}}[A]$ denote its component at $A$. 
We define the following two properties.

\begin{enumerate}
 
\item (\textbf{Proximity})  
For a finite set $A$ with $|A|\ge2$, suppose
\[
\sigma^{\mathsf{F}}_{A}(\ell)=\{\ell_1,\ell_2\},
\]
where $\ell_i\in\mathsf{F}[A_i]$ for $i\in\{1,2\}$. 
Then
\[
\left| \sigma^{\mathsf{F}}_{A_1}(\ell_{1})
\symdiff 
\sigma^{\mathsf{F}}_{A_2}(\ell_{2}) \right| = 2.
\]
Here, $\symdiff$ denotes the symmetric difference.

\item  (\textbf{Mergeability}) 
For a finite set $A$ with $|A|\ge2$, suppose that $\{\ell_{1}, \ell_{2}\} \in \SpSplit^{\mathsf{F}}[A]$ satisfies 
\[
\left| \sigma^{\mathsf{F}}_{A_1}(\ell_{1})
\symdiff 
\sigma^{\mathsf{F}}_{A_2}(\ell_{2}) \right| = 2,
\]
where $\ell_i\in\mathsf{F}[A_i]$ for $i\in\{1,2\}$. 
Then there exists a unique $\mathsf{F}$-structure $\ell\in\mathsf{F}[A]$ such that
\[
\sigma^{\mathsf{F}}_{A}(\ell)=\{\ell_1,\ell_2\}.
\]
In this case, we call $\ell$ the \textbf{merging} of $\ell_1$ and $\ell_2$.
\end{enumerate}
\end{definition}

\begin{remark}
\label{rem:prime}
For distinct singletons $\{a\}$ and $\{b\}$, we have $\{a\} \symdiff \{b\} = \{a,b\}$. 
Therefore, if $|A| = 2$, then the condition 
$
\left| \sigma^{\mathsf{F}}_{A_1}(\ell_{1})
\symdiff 
\sigma^{\mathsf{F}}_{A_2}(\ell_{2}) \right| = 2
$
is automatically satisfied. 
If $|A| \geq 3$, then
$$
\left| \sigma^{\mathsf{F}}_{A_1}(\ell_{1})
\symdiff 
\sigma^{\mathsf{F}}_{A_2}(\ell_{2}) \right| = 2
$$
if and only if
$$
|\sigma^{\mathsf{F}}_{A_1}(\ell_{1}) \cap \sigma^{\mathsf{F}}_{A_2}(\ell_{2})| = 1. 
$$
In this case, the proximity and mergeability properties are illustrated in Figure~\ref{Fig:prox-merg}. 
For later use, we denote by $\ell^{\prime} \in \mathsf{F}[A']$ the $\mathsf{F}$-structure that satisfies 
\[
\sigma^{\mathsf{F}}_{A_1}(\ell_{1}) \cap \sigma^{\mathsf{F}}_{A_2}(\ell_{2})
=
\{\ell^{\prime}\}.
\]

\begin{figure}[h]
\centering
\begin{tikzpicture}[baseline=45]
\draw (1,1.73) node[v](31){} node[left]{$\ell_1$};
\draw (2,1.73) node[v](32){} node[right]{$\ell_2$};
\draw (1.5,2.595) node[v](41){} node[above]{$\ell$};
\draw[] (31) -- (41) -- (32);
\draw (31)--(0.75,1.3);
\draw (31)--(1.25,1.3);
\draw (32)--(1.75,1.3);
\draw (32)--(2.25,1.3);
\end{tikzpicture}
$\quad\Longrightarrow\quad$
\begin{tikzpicture}[baseline=45]
\draw (0.5, 0.865) node[v](21){};
\draw (1.5, 0.865) node[v](22){} node[below=1pt, xshift=-3.5pt]{$\exists \, \ell'$};
\draw (2.5, 0.865) node[v](23){};
\draw (1,1.73) node[v](31){} node[left]{$\ell_1$};
\draw (2,1.73) node[v](32){} node[right]{$\ell_2$};
\draw (1.5,2.595) node[v](41){} node[above]{$\ell$};
\draw (31) -- (21) ;
\draw (32) -- (23) ;
\draw[dashed] (31) -- (22) -- (32);
\draw[] (31) -- (41) -- (32);
\end{tikzpicture}
\hspace{20mm}
\begin{tikzpicture}[baseline=45]
\draw (0.5, 0.865) node[v](21){};
\draw (1.5, 0.865) node[v](22){} node[below]{$\ell'$};
\draw (2.5, 0.865) node[v](23){};
\draw (1,1.73) node[v](31){} node[left]{$\ell_1$};
\draw (2,1.73) node[v](32){} node[right]{$\ell_2$};
\draw (31) -- (21) ;
\draw (32) -- (23) ;
\draw[] (31) -- (22) -- (32);
\end{tikzpicture}
$\quad\Longrightarrow\quad$
\begin{tikzpicture}[baseline=45]
\draw (0.5, 0.865) node[v](21){};
\draw (1.5, 0.865) node[v](22){} node[below]{$\ell'$};
\draw (2.5, 0.865) node[v](23){};
\draw (1,1.73) node[v](31){} node[left]{$\ell_1$};
\draw (2,1.73) node[v](32){} node[right]{$\ell_2$};
\draw (1.5,2.595) node[v](41){} node[above, xshift=-6pt]{$\exists ! \, \ell$};
\draw (31) -- (21) ;
\draw (32) -- (23) ;
\draw[] (31) -- (22) -- (32);
\draw[dashed] (31) -- (41) -- (32);
\end{tikzpicture}
\caption{Illustration of the proximity property (left) and the mergeability property (right).}
\label{Fig:prox-merg}
\end{figure}
\end{remark}

\begin{example}
\label{ex:Gr}
Let $\SpMAT$ denote the \textbf{species of MAT-labeled complete graphs}. 
Namely, for a finite set $A$, $\SpMAT[A]$ consists of all MAT-labeled complete graphs $(K_{A}, \lambda)$. 
We often abbreviate $(K_{A}, \lambda)$ simply to $\lambda$. 

For a finite set $A$ with $|A| \geq 2$, let $\lambda\in\SpMAT[A]$ and let $a_1,a_2\in A$ be the MAT-simplicial vertices of $(K_A,\lambda)$.
Define the splitting $\sigma^{\SpMAT}$ by
\[
\sigma^{\SpMAT}_{A}(\lambda)
\coloneqq
\{\lambda_1,\lambda_2\}, 
\]
where $\lambda_i\coloneqq\lambda|_{E_{G_{i}}}$ and $G_{i} \coloneqq K_{A_i}$ for $i \in \{1,2\}$. 
Then $\sigma^{\SpMAT}$ satisfies the proximity (Lemma~\ref{lem:splitting-MAT}) and mergeability (Lemma~\ref{lem:merge-MAT}) properties. 
\end{example}

\begin{example}
\label{ex:Vi}
Let $\SpRV$ denote the \textbf{species of regular vines}. 
Namely, for a finite set $A$, $\SpRV[A]$ consists of all regular vines on $A$. 

For a finite set $A$ with $|A| \geq 2$, let $\mathcal{V}\in\SpRV[A]$. 
Assume that $A=\{a_1\}\vee \{a_2\}$ (the join of $\{a_1\}$ and $\{a_2\}$) in $\mathcal{V}$ for $a_1,a_2\in A$. 
Let $\mathcal{V}_i$ be the principal ideal of $\mathcal{V}$ generated by $A_i$ for $i \in \{1,2\}$.  
Define the splitting $\sigma^{\SpRV}$ by 
\[
\sigma^{\SpRV}_{A}(\mathcal{V})
\coloneqq
\{\mathcal{V}_1,\mathcal{V}_2\}.
\]
Then $\sigma^{\SpRV}$ satisfies the proximity (Lemma~\ref{lem:splitting-vine}) and mergeability (Lemma~\ref{lem:merging-vine}) properties.
\end{example}

\begin{example}
\label{ex:mASPD}
Let $\SpASPD$ denote the \textbf{species of maximal ASPDs}. 
Namely, for a finite set $A$, $\SpASPD[A]$ consists of all maximal ASPDs on $A$. 

For a finite set $A$ with $|A| \geq 2$, let $\mathcal{D} \in \SpASPD[A]$. 
Let $a_{1},a_{2} \in A$ be the bottom alternatives of $\mathcal{D}$, and let
$
\mathcal{D}_{i} \coloneqq \mathcal{D}_{A_i}
$
be the restriction of $\mathcal{D}$ to $A_i$ for $i \in \{1,2\}$. 
Define the splitting $\sigma^{\SpASPD}$ by 
\[
\sigma^{\SpASPD}_{A}(\mathcal{D}) \coloneqq \{\mathcal{D}_{1}, \mathcal{D}_{2}\}.
\]
Then $\sigma^{\SpASPD}$ satisfies the proximity (Lemma~\ref{Slinko}) and mergeability (Lemma~\ref{merging ASPD}) properties.
\end{example} 
 
The following theorem, which is the main result of this paper, provides an axiomatic characterization of species whose splittings satisfy the proximity and mergeability properties.

\begin{theorem}
\label{thm:AxCh}
Let $\mathsf{F}$ be a species satisfying the conditions in Definition~\ref{def:KF}, equipped with a splitting $\sigma^{\mathsf{F}}$ satisfying the proximity and mergeability properties in Definition~\ref{def:prox-merge}. 

Then the pair $(\mathsf{F}, \sigma^{\mathsf{F}})$ is unique in the following sense. 
If another pair $(\mathsf{G}, \sigma^{\mathsf{G}})$ satisfies the same conditions, then there exists a unique natural transformation $\eta \colon \mathsf{F} \to \mathsf{G}$ such that \textbf{$\eta$ commutes with the splittings}; that is, for every finite set $A$, the diagram
\begin{center}
\begin{tikzcd}
  \mathsf{F}[A] \ar[r,"\sigma^{\mathsf{F}}_{A}"] \ar[d,"\eta_{A}"'] &
  \SpSplit^{\mathsf{F}}[A] \ar[d,"\SpSplit^{\eta}_{A}"]\\
  \mathsf{G}[A] \ar[r,"\sigma^{\mathsf{G}}_{A}"] &
  \SpSplit^{\mathsf{G}}[A]
\end{tikzcd}
\end{center}
commutes. 

Moreover, $\eta$ is a natural isomorphism. 
In particular, $\mathsf{F}$ and $\mathsf{G}$ are isomorphic. 

Here, $\SpSplit^{\eta}$ is the natural transformation from $\SpSplit^{\mathsf{F}}$ to $\SpSplit^{\mathsf{G}}$ defined recursively as follows:
\begin{enumerate}[(i)]
\item If $|A| \le 1$, define $\SpSplit^{\eta}_{A} \coloneqq \mathrm{id}_{A}$. 
\item If $|A| \ge 2$, define 
\[
\SpSplit^{\eta}_{A}(\{\ell_1,\ell_2\})
\coloneqq
\{\eta_{A_1}(\ell_1), \eta_{A_2}(\ell_2)\},
\]
where $\ell_{i} \in \mathsf{F}[A_{i}]$ for $i \in \{1,2\}$. 
\end{enumerate}
\end{theorem}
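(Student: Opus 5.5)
We construct $\eta_A$ by induction on $|A|$. For $|A|\le 1$ we have $\mathsf{F}[A]=A=\mathsf{G}[A]$, so we set $\eta_A\coloneqq\mathrm{id}_A$. Commutation with the splittings forces this, and the definition of $\SpSplit^{\eta}_A$ in this range agrees with it.

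For $|A|\ge 2$, assume $\eta_B$ has been defined for all $|B|<|A|$. Assume also that each such $\eta_B$ is a bijection, natural with respect to bijections, and commutes with the splittings. Let $\ell\in\mathsf{F}[A]$ with $\sigma^{\mathsf{F}}_A(\ell)=\{\ell_1,\ell_2\}$ and $\ell_i\in\mathsf{F}[A_i]$. We want to set $\eta_A(\ell)$ to be the merging of $\eta_{A_1}(\ell_1)$ and $\eta_{A_2}(\ell_2)$ in $\mathsf{G}$. To invoke mergeability for $\mathsf{G}$ we must check
\[
\bigl|\sigma^{\mathsf{G}}_{A_1}(\eta_{A_1}(\ell_1))\symdiff\sigma^{\mathsf{G}}_{A_2}(\eta_{A_2}(\ell_2))\bigr|=2 .
\]
By the inductive commutation we have $\sigma^{\mathsf{G}}_{A_i}(\eta_{A_i}(\ell_i))=\SpSplit^{\eta}_{A_i}(\sigma^{\mathsf{F}}_{A_i}(\ell_i))$. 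For $|A|\ge3$, the set $\SpSplit^{\eta}_{A_i}(\sigma^{\mathsf{F}}_{A_i}(\ell_i))$ is obtained by applying $\eta_{A_i\setminus\{x\}}$ componentwise. These maps are injective, and the sets $\mathsf{G}[B]$ are pairwise disjoint for distinct $B$. Hence two components coincide after applying $\eta$ if and only if they lie over the same set and coincide before. So sizes of intersections are preserved: $|\,\cdot\cap\cdot\,|=1$ is carried over from $\mathsf{F}$, where it holds by proximity (Remark~\ref{rem:prime}). For $|A|=2$ the condition is automatic. Thus $\eta_A(\ell)$ is well defined, and by construction $\sigma^{\mathsf{G}}_A(\eta_A(\ell))=\SpSplit^{\eta}_A(\sigma^{\mathsf{F}}_A(\ell))$.

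Uniqueness also follows by induction: any $\eta'$ commuting with the splittings must satisfy $\sigma^{\mathsf{G}}_A(\eta'_A(\ell))=\{\eta_{A_1}(\ell_1),\eta_{A_2}(\ell_2)\}$, and the uniqueness clause of mergeability in $\mathsf{G}$ then forces $\eta'_A(\ell)=\eta_A(\ell)$.

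To show $\eta_A$ is bijective, we run the same construction with the roles of $\mathsf{F}$ and $\mathsf{G}$ swapped, producing $\theta\colon\mathsf{G}\to\mathsf{F}$. Inductively, $\theta_A\circ\eta_A(\ell)$ has the same split as $\ell$. Here we use that $\sigma^{\mathsf{F}}_A$ is injective, which follows from the uniqueness in mergeability applied to $\sigma^{\mathsf{F}}_A(\ell)$, which satisfies the symmetric-difference condition by proximity. Hence $\theta_A\eta_A=\mathrm{id}$, and symmetrically $\eta_A\theta_A=\mathrm{id}$.

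Naturality is the main point requiring care. For a bijection $h\colon A\to B$, we compare $\mathsf{G}[h](\eta_A(\ell))$ and $\eta_B(\mathsf{F}[h](\ell))$ through their splittings. Naturality of $\sigma^{\mathsf{G}}$ and $\sigma^{\mathsf{F}}$ gives
\[
\sigma^{\mathsf{G}}_B(\mathsf{G}[h]\eta_A(\ell))=\SpSplit^{\mathsf{G}}[h]\,\SpSplit^{\eta}_A\,\sigma^{\mathsf{F}}_A(\ell),
\]
and
\[
\sigma^{\mathsf{G}}_B(\eta_B\mathsf{F}[h](\ell))=\SpSplit^{\eta}_B\,\SpSplit^{\mathsf{F}}[h]\,\sigma^{\mathsf{F}}_A(\ell).
\]
These agree componentwise by the inductive naturality $\mathsf{G}[h|_{A_i}]\eta_{A_i}=\eta_{B_i}\mathsf{F}[h|_{A_i}]$. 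Injectivity of $\sigma^{\mathsf{G}}_B$ then concludes the induction, and also shows that $\SpSplit^{\eta}$ is a natural transformation.
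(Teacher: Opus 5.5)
Your proof is correct and follows essentially the paper's argument: induction on $|A|$, with $\eta_A(\ell)$ defined as the $\mathsf{G}$-merging of $\eta_{A_1}(\ell_1)$ and $\eta_{A_2}(\ell_2)$, uniqueness from the uniqueness clause of mergeability, naturality by comparing splittings (the paper's cube diagram) plus injectivity of $\sigma^{\mathsf{G}}_B$, and invertibility from the reverse construction. The differences are only cosmetic. You verify the symmetric-difference condition by carrying injectivity of the lower $\eta_B$ through the induction, whereas the paper notes that the intersection contains $\eta_{A'}(\ell')$ and has at most one element because $A'$ is the only shared underlying set. You also prove $\theta_A\eta_A=\mathrm{id}$ level by level via injectivity of $\sigma^{\mathsf{F}}_A$, rather than through the paper's global uniqueness argument.
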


\begin{proof}
We construct the desired natural transformation $\eta \colon \mathsf{F} \to \mathsf{G}$ inductively by defining its components
\[
\eta_{A} \colon \mathsf{F}[A] \to \mathsf{G}[A]
\]
according to the cardinality of $A$.

If $|A| \leq 1$, then
\[
\mathsf{F}[A] = \mathsf{G}[A] = \SpSplit^{\mathsf{F}}[A] = \SpSplit^{\mathsf{G}}[A] = A,
\]
and
\[
\sigma^{\mathsf{F}}_{A}
=
\sigma^{\mathsf{G}}_{A}
=
\mathrm{id}_{A}.
\]
Define $\eta_{A} \coloneqq \mathrm{id}_{A}$. 
Then $\eta_{A}$ commutes with the splittings. 
Uniqueness and naturality are immediate.

Now assume that $|A| \geq 2$. 
Let $\ell \in \mathsf{F}[A]$ and write
\[
\sigma^{\mathsf{F}}_{A}(\ell)=\{\ell_{1},\ell_{2}\},
\]
where $\ell_{i}\in\mathsf{F}[A_i]$ for $i\in\{1,2\}$ and $a_{1},a_{2}\in A$ are distinct. 
Define
\[
m_i \coloneqq \eta_{A_i}(\ell_i).
\]
Then
\[
\SpSplit^{\eta}_{A}(\{\ell_1,\ell_2\})
=
\{m_1,m_2\}.
\]

We claim that
\[
\left|
\sigma^{\mathsf{G}}_{A_1}(m_{1})
\symdiff
\sigma^{\mathsf{G}}_{A_2}(m_{2})
\right|
=2.
\]

If $|A|=2$, this is immediate. 
Assume therefore that $|A|\ge3$. 
By the proximity property of $\sigma^{\mathsf{F}}$, we have
\[
\sigma^{\mathsf{F}}_{A_1}(\ell_{1})
\cap
\sigma^{\mathsf{F}}_{A_2}(\ell_{2})
=
\{\ell'\}
\]
for some $\ell'\in\mathsf{F}[{A'}]$. 
By the induction hypothesis applied to $A_i$,
\[
m'
\coloneqq
\eta_{{A'}}(\ell')
\]
belongs to
\[
\sigma^{\mathsf{G}}_{A_1}(m_1)
\cap
\sigma^{\mathsf{G}}_{A_2}(m_2).
\]
Hence,
\[
\sigma^{\mathsf{G}}_{A_1}(m_1)
\cap
\sigma^{\mathsf{G}}_{A_2}(m_2)
\neq \emptyset.
\]

On the other hand,
\[
\sigma^{\mathsf{G}}_{A_1}(m_1)
\neq
\sigma^{\mathsf{G}}_{A_2}(m_2),
\]
since ${A'}$ is the unique common subset of $A_1$ and $A_2$ whose cardinality is one less than $|A_1|=|A_2|$. 
Therefore,
\[
\left|
\sigma^{\mathsf{G}}_{A_1}(m_{1})
\symdiff
\sigma^{\mathsf{G}}_{A_2}(m_{2})
\right|
=2,
\]
proving the claim.

By the mergeability property of $\sigma^{\mathsf{G}}$, there exists a unique element
\[
m\in\mathsf{G}[A]
\]
such that
\[
\sigma^{\mathsf{G}}_{A}(m)
=
\{m_1,m_2\}.
\]
To make $\eta_A$ commute with the splittings, we must define
\[
\eta_A(\ell)\coloneqq m.
\]
This also proves uniqueness.

Since the construction of $\eta_A$ depends only on the set-theoretic property of $A$ and not on any additional structure on $A$, the naturality of $\eta$ follows.
For completeness, we provide the details.

Let
\[
h\colon A\longrightarrow B
\]
be a bijection. 
The naturality condition for $\eta$ is equivalent to the commutativity of the left face of the cube diagram below:
\begin{center}
\begin{tikzcd}[row sep=1.5em, column sep = 1em]
 \mathsf{F}[A] 
	\arrow[rr, "\sigma^{\mathsf{F}}_{A}"] \arrow[dr, "{ \mathsf{F}[h]}"] \arrow[dd,"\eta_{A}"'] 
&&
\SpSplit^{\mathsf{F}}[A] 
	\arrow[dd,swap, "\SpSplit^{\eta}_{A}"  {pos=0.2}]  
	\arrow[dr,"{ \SpSplit^{\mathsf{F}}[h]}"] \\
&
\mathsf{F}[B] 
	\arrow[rr, "\sigma^{\mathsf{F}}_{B}" {pos=0.8}, crossing over]
&&
\SpSplit^{\mathsf{F}}[B] 
	\arrow[dd,"\SpSplit^{\eta}_{B}" {pos=0.3}]\\
\mathsf{G}[A] 
	\arrow[rr, "\sigma^{\mathsf{G}}_{A}" {pos=0.8}]  
	\arrow[dr, swap, "{\mathsf{G}[h]}"]
&&   
\SpSplit^{\mathsf{G}}[A]   
	\arrow[mapsto,dr,"{ \SpSplit^{\mathsf{G}}[h]}"] \\
&  
\mathsf{G}[B]  
	\arrow[rr, "\sigma^{\mathsf{G}}_{B}"]   
&&   
\SpSplit^{\mathsf{G}}[B]
\arrow[from=2-2,to=4-2, "\eta_B", swap, {pos=0.3}, crossing over]
\end{tikzcd}
\end{center}
All other faces commute by the assumptions and the induction hypothesis. 
Hence,
\begin{align*}
\sigma^{\mathsf{G}}_{B} \circ \eta_{B} \circ \mathsf{F}[h]
&= \SpSplit^{\eta}_{B} \circ \sigma^{\mathsf{F}}_{B} \circ \mathsf{F}[h]
&& \text{(front face)}\\
&= \SpSplit^{\eta}_{B} \circ \SpSplit^{\mathsf{F}}[h] \circ \sigma^{\mathsf{F}}_{A}
&& \text{(top face)}\\
&= \SpSplit^{\mathsf{G}}[h] \circ \SpSplit^{\eta}_{A} \circ \sigma^{\mathsf{F}}_{A}
&& \text{(right face)}\\
&= \SpSplit^{\mathsf{G}}[h] \circ \sigma^{\mathsf{G}}_{A} \circ \eta_{A}
&& \text{(back face)}\\
&= \sigma^{\mathsf{G}}_{B} \circ \mathsf{G}[h] \circ \eta_{A}
&& \text{(bottom face)}.
\end{align*}
By the uniqueness in the mergeability property of $\sigma^{\mathsf{G}}$, we conclude that
\[
\eta_{B} \circ \mathsf{F}[h]
=
\mathsf{G}[h] \circ \eta_{A}.
\]
Hence, $\eta$ is natural.

By interchanging the roles of $\mathsf{F}$ and $\mathsf{G}$, we similarly obtain a unique natural transformation
\[
\xi \colon \mathsf{G} \to \mathsf{F}
\]
that commutes with the splittings. 
Since both compositions $\xi\circ\eta$ and $\eta\circ\xi$ also commute with the splittings, uniqueness implies that
\[
\xi \circ \eta = \id_{\mathsf{F}}
\qquad\text{and}\qquad
\eta \circ \xi = \id_{\mathsf{G}}.
\]
Therefore, $\eta$ is a natural isomorphism with inverse $\xi$.
\end{proof}

We conclude this section with two important corollaries of Theorem~\ref{thm:AxCh}.
Since the species of MAT-labeled complete graphs $\SpMAT$ (Example~\ref{ex:Gr}), regular vines $\SpRV$ (Example~\ref{ex:Vi}), and maximal ASPDs $\SpASPD$ (Example~\ref{ex:mASPD}) admit splittings satisfying the proximity and mergeability properties, we obtain the following.

\begin{corollary}
\label{cor:3iso}
Any two of the three species \(\SpMAT, \SpRV, \SpASPD\) are isomorphic.
\end{corollary}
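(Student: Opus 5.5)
The corollary follows directly from Theorem~\ref{thm:AxCh}, so the plan is to check that each of the three species, with its splitting, satisfies the hypotheses of that theorem, and then apply the theorem to each pair.

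First, we check the conditions of Definition~\ref{def:KF} for $\SpMAT$, $\SpRV$ and $\SpASPD$. For disjointness across different label sets, note that a structure on $A$ remembers $A$. An MAT-labeled complete graph determines its vertex set. A regular vine, viewed as a subposet of $2^{A}$, has top element $A$ by Proposition~\ref{prop:card-R-vine}. A domain consists of bijections onto $A$. So $\mathsf{F}[A]\cap\mathsf{F}[B]=\varnothing$ for $A\neq B$. For $|A|\le 1$ we adopt the identification $\mathsf{F}[A]=A$: the empty set carries no structure, and a singleton $\{a\}$ carries exactly one structure (the one-vertex graph, the one-element vine $\{\{a\}\}$, or the one-preference domain), which we identify with $a$. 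Transport of structure is the obvious relabeling in each case.

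Next, we check that each $\sigma$ is a natural transformation landing in $\SpSplit^{\mathsf{F}}$. The pair $a_1,a_2$ is canonically determined by the structure: the MAT-simplicial vertices (the endpoints of the edge with the largest label, by Lemma~\ref{lem:splitting-MAT}), the two atoms whose join is $A$, or the two bottom alternatives (Lemma~\ref{Slinko}(2)). Hence relabeling commutes with splitting. For ASPDs we also need that the restriction $\mathcal{D}_{A_i}$ equals the domain $\mathcal{D}_i$ of Lemma~\ref{Slinko}; this follows from a cardinality count, since both have $2^{n-2}$ elements and the latter is contained in the former.

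The main work is verifying proximity and mergeability in the precise form of Definition~\ref{def:prox-merge}, and this is the step I expect to be the main obstacle. For proximity with $|A|\ge3$, we need that $\sigma_{A_1}(\ell_1)$ and $\sigma_{A_2}(\ell_2)$ share exactly one element, namely the common restriction $\ell'$ on $A'$. This means showing that $a_2$ is split off in $\ell_1$ and $a_1$ in $\ell_2$:
\begin{itemize}
\item for MAT-labeled graphs, $a_2$ is MAT-simplicial in $K_{A_1}$ because its edges carry labels $1,\dots,n-2$;
\item for vines, this is the chain argument around Lemma~\ref{lem:splitting-vine};
\item for ASPDs, $a_2$ is a bottom alternative of $\mathcal{D}_1$ by the diagram in Lemma~\ref{Slinko}.
\end{itemize}
In each case the common part equals $\ell'$ by the respective splitting lemma, and the two splittings differ because they live on different label sets.

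For mergeability we reverse this. The condition $|\sigma\cap\sigma|=1$ forces $a_{3-i}$ to be split off in $\ell_i$ with a common restriction on $A'$. Existence of the merge then follows from Lemma~\ref{lem:merge-MAT}, from Lemma~\ref{lem:merging-vine} together with Lemma~\ref{lem:v'=v1capv2}, and from Lemma~\ref{merging ASPD}. Uniqueness holds because a structure on $A$ is recovered from its two halves:
\begin{itemize}
\item for graphs, the union of the labelings together with the forced label $|A|-1$ on the edge $\{a_1,a_2\}$;
\item for vines, $\mathcal{V}_1\cup\mathcal{V}_2\cup\{A\}$;
\item for ASPDs, the union of the two blocks, where we must check that the preferences ending in $a_i$ are exactly the extensions of $\mathcal{D}_{A_i}$, which again follows from a cardinality count.
\end{itemize}
This is already recorded in Examples~\ref{ex:Gr}, \ref{ex:Vi} and~\ref{ex:mASPD}.

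With these facts in hand, apply Theorem~\ref{thm:AxCh} to each pair $(\mathsf{F},\sigma^{\mathsf{F}}),(\mathsf{G},\sigma^{\mathsf{G}})$ among the three species. This yields a natural isomorphism between them that commutes with the splittings, which proves the corollary.
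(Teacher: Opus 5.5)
Your proposal is correct and follows the paper's route: the paper obtains the corollary by applying Theorem~\ref{thm:AxCh} to each pair among $\SpMAT$, $\SpRV$, $\SpASPD$, with the splittings of Examples~\ref{ex:Gr}, \ref{ex:Vi} and~\ref{ex:mASPD} supplying proximity and mergeability via the cited splitting and merging lemmas. Your extra checks are details the paper leaves implicit in those examples, and they are sound. One of them is that the restriction $\mathcal{D}_{A_i}$ coincides with the block $\mathcal{D}_i$ of Lemma~\ref{Slinko}; this also follows at once from the maximality of $\mathcal{D}_i$, since a restriction of an ASPD is again an ASPD.
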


In \cite{kurowicka2010counting-dm} and \cite{kurowicka2010regular-dm}, explicit formulas are given for the numbers of regular vines and their isomorphism classes. 
Hence, we obtain the following.

\begin{corollary}[{\cite[p.~202]{kurowicka2010counting-dm}}, {\cite[p.~227]{kurowicka2010regular-dm}}]
\label{cor:key-seq}
Let $\mathsf{F}$ be a species satisfying the conditions in Definition~\ref{def:KF}, equipped with splittings $\sigma^{\mathsf{F}}$ satisfying the proximity and mergeability properties from Definition~\ref{def:prox-merge}. 
Then
$
|\mathsf{F}[1]| = 1,
$
and for $n \geq 2$,
\[
|\mathsf{F}[n]|
=
2^{(n-2)(n-3)/2-1}\cdot n!.
\]
This sequence appears in the OEIS \cite[A185970]{OEIS}. 

Let \(\tilde{f}_n\) denote the number of unlabeled \(\mathsf{F}\)-structures, that is, the number of isomorphism classes on an $n$-element set. 
Then
$
\tilde{f}_1=\tilde{f}_2=\tilde{f}_3=1,
$
and for \(n \ge 4\),
\[
\tilde{f}_n
=
2^{(n-2)(n-3)/2 - 1}
\sum_{k=0}^{\lfloor n/2 \rfloor - 1}
c_k\,2^{-k(n-k-2)},
\]
where
\[
c_k =
\begin{cases}
1, & 0 \le k < \lfloor n/2 \rfloor - 1, \\
2, & k = \lfloor n/2 \rfloor - 1.
\end{cases}
\]
This sequence appears in the OEIS \cite[A379695]{OEIS}. 
\end{corollary}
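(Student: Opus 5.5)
By Theorem~\ref{thm:AxCh}, any species $\mathsf{F}$ satisfying the hypotheses is naturally isomorphic to $\SpRV$. The plan is therefore to reduce both counting statements to counting regular vines. A natural isomorphism $\eta$ gives bijections $\mathsf{F}[A]\to\SpRV[A]$, so $|\mathsf{F}[n]|=|\SpRV[n]|$. Naturality also intertwines the actions of the symmetric group $\mathfrak{S}_A$ given by transport of structure. Hence $\eta_A$ maps $\mathfrak{S}_A$-orbits bijectively onto $\mathfrak{S}_A$-orbits, and $\tilde f_n$ equals the number of isomorphism classes of regular vines on $n$ elements. The case $n=1$ is immediate, since $\mathsf{F}[A]=A$ when $|A|=1$.

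Both counts could then simply be quoted from \cite{kurowicka2010counting-dm,kurowicka2010regular-dm}. For a self-contained labeled count, I would instead use the splitting/merging recursion directly. By mergeability and uniqueness, each $\ell\in\mathsf{F}[A]$ with $|A|\ge 3$ corresponds bijectively to the following data: an unordered pair $\{a_1,a_2\}$; a structure $\ell'\in\mathsf{F}[A']$; and a choice of $\ell_i\in\mathsf{F}[A_i]$ whose splitting contains $\ell'$ together with some structure on $A_i\setminus\{x_i\}$. Let $N(n)$ be the number of pairs $(\ell_1,\ell')$ with $\ell'$ one of the two parts of $\sigma(\ell_1)$. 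Then $N(n)=2|\mathsf{F}[n-1]|$. By symmetry, the number of extensions of a fixed $\ell'$ on a fixed $(n-2)$-set to a fixed $(n-1)$-superset is
\[
\frac{2|\mathsf{F}[n-1]|}{(n-1)\,|\mathsf{F}[n-2]|}.
\]
This yields the recursion
\[
|\mathsf{F}[n]|=\binom n2 |\mathsf{F}[n-2]|\left(\frac{2|\mathsf{F}[n-1]|}{(n-1)|\mathsf{F}[n-2]|}\right)^{2}.
\]
The symmetry step relies on the fact that transport by permutations fixing $A'$ acts transitively on the choice of superset. I would verify by induction that the closed form $2^{(n-2)(n-3)/2-1}n!$ satisfies this recursion, starting from $|\mathsf{F}[2]|=1$ (the unique pair) and $|\mathsf{F}[3]|=3$.

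The unlabeled count is the main obstacle. Burnside's lemma requires understanding the automorphisms of regular vines. I would argue via the splitting that an automorphism must preserve the unordered pair $\{a_1,a_2\}$. Consequently, only the identity and possibly the involution swapping the two halves can occur, and only for vines that are symmetric at every level, a property read off recursively. Counting the symmetric vines, by the depth $k$ to which the two halves agree before diverging, gives the terms $2^{-k(n-k-2)}$, with the boundary case $k=\lfloor n/2\rfloor-1$ counted twice. Because this enumeration is delicate, in the write-up I would rely on the cited formula of \cite{kurowicka2010regular-dm} for $\SpRV$ and transfer it along $\eta$.
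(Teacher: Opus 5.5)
Your main line is the paper's argument. Theorem~\ref{thm:AxCh} makes every such $\mathsf{F}$ naturally isomorphic to $\SpRV$, and both counts are then quoted from \cite{kurowicka2010counting-dm,kurowicka2010regular-dm}. Your remark that naturality matches transport orbits, so that $\tilde f_n$ is the number of isomorphism classes of regular vines, makes explicit what the paper leaves implicit. That part is correct. Like the paper, you ultimately take the unlabeled formula from the citation, and your automorphism sketch is consistent with it.

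The self-contained labeled recursion you offer as a replacement has a genuine gap. Write $e(\ell')$ for the number of $\ell_1\in\mathsf{F}[A'\cup\{b\}]$ with $\ell'\in\sigma^{\mathsf{F}}(\ell_1)$. Your bijection actually gives
\[
|\mathsf{F}[n]|=\binom{n}{2}\sum_{\ell'\in\mathsf{F}[A']}e(\ell')^{2}.
\]
Transport by permutations fixing $A'$ pointwise shows only that $e(\ell')$ does not depend on the added point $b$. Double counting then shows that the \emph{average} of $e$ over $\mathsf{F}[A']$ is $2|\mathsf{F}[n-1]|/((n-1)|\mathsf{F}[n-2]|)$. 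Replacing $\sum e(\ell')^{2}$ by $|\mathsf{F}[n-2]|$ times the squared average requires $e$ to be constant on $\mathsf{F}[A']$. Symmetry cannot give this, because $\mathfrak{S}_{A'}$ is not transitive on $\mathsf{F}[A']$ once $|A'|\ge 4$ (already $\tilde f_4=2$). Without constancy, Cauchy--Schwarz makes your expression only a lower bound.

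The missing step does follow from the axioms. Suppose $\sigma^{\mathsf{F}}(\ell_1)=\{\ell',m_1\}$ with $m_1$ on $(A'\cup\{b\})\setminus\{x\}$. Then proximity forces $x$ to be one of the two split points of $\ell'$. It also forces $m_1$ to be an extension by $b$ of the corresponding part $\ell''\in\sigma^{\mathsf{F}}(\ell')$, and mergeability gives the converse. Hence $e(\ell')=\sum_{\ell''\in\sigma^{\mathsf{F}}(\ell')}e(\ell'')$, and with $e=1$ on singletons one gets $e(\ell')=2^{|A'|-1}$. For vines this is the number of maximal chains of $\mathcal{V}'$, each determining one doubling; compare $|\mathcal{D}|=2^{n-1}$ in Lemma~\ref{Slinko}. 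The recursion then becomes $|\mathsf{F}[n]|=\binom{n}{2}4^{n-3}|\mathsf{F}[n-2]|$, which the closed form $2^{(n-2)(n-3)/2-1}n!$ satisfies from $|\mathsf{F}[1]|=|\mathsf{F}[2]|=1$.
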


We note that related enumeration formulas for maximal ASPDs were also obtained independently by Karpov \cite[Theorem 1]{Karpov25} using a different approach based on binary matrices.
The values of \(\tilde{f}_n\) for \(1 \le n \le 12\) are listed below:
\begin{align*}
1,1,1,
2,
6,
40,
560,
17024,
1066496,
135307264,
34496249856,
17626824704000.
\end{align*}
A recursive formula for \(\tilde{f}_n\) will be given in Corollary~\ref{cor:recursive}.

%********************************************************************************************************

\section{Explicit constructions of the correspondences}
\label{sec:correspondences}

By Theorem~\ref{thm:AxCh}, we can construct, in an inductive manner, isomorphisms between any two of the species \(\SpMAT, \SpRV, \SpASPD\). 
The strength of the theorem lies in the fact that these natural isomorphisms always exist and are unique. 
In this section, we provide explicit descriptions of these isomorphisms. 
These constructions make the correspondences more transparent and allow one to translate structural properties between the different settings.

It is worth noting that the structures \(\SpMAT, \SpRV, \SpASPD\) arise from distinct areas and are defined in quite different ways. 
Thus, constructing correspondences between them directly would be nontrivial. 
However, as will be seen in the proofs of Theorems~\ref{thm:graph-vine}, \ref{thm:graph-ASPD} and~\ref{thm:vine-ASPD}, the recursive structure provided by splitting and merging, together with Theorem~\ref{thm:AxCh}, reduces the problem to verifying that the maps are well defined and that the relevant diagrams commute.
 
We continue to use the notation introduced in the previous section.
Let $\sigma^{\mathsf{F}}\colon \mathsf{F}\longrightarrow\SpSplit^{\mathsf{F}}$ be a splitting and let $A$ be a finite set.
If
\[
\sigma^{\mathsf{F}}_{A}(\ell) = \{\ell_{1}, \ell_{2}\},
\]
denote by $A_{i} = A\setminus\{a_{i}\}$ the subset of $A$ such that $\ell_{i} \in \mathsf{F}[A_{i}]$ for $i \in \{1,2\}$, and let
$
A^{\prime}= A\setminus\{a_{1},a_{2}\}.
$

\subsection{MAT-labelings and regular vines}

It was proved in \cite[Theorem~6.10 and Corollary~6.11]{TTT24} that the categories of MAT-labeled graphs and locally regular vines are equivalent. 
Consequently, the species $\SpMAT$ and $\SpRV$ are isomorphic. 
We recall below an explicit isomorphism between these species.

Let $\lambda \in \SpMAT[A]$ and let $e = \{a,b\} \in \pi_{k}$. 
By Condition~\ref{definition MAT-labeling}(\ref{definition MAT-labeling triangle}), the edge $e$ forms exactly $k-1$ triangles with vertices $c_{1}, \dots, c_{k-1}$ whose labels are less than $k$. 
The set
\[
C_{e} \coloneqq \{a,b,c_{1}, \dots, c_{k-1}\}
\]
is called the \textbf{principal clique} generated by $e$. 
Define
$
\xi \colon \SpMAT \longrightarrow \SpRV
$
by
\begin{align*}
\xi_{A}(\lambda)
\coloneqq
\Set{\{a\} \mid a \in A}
\cup
\Set{C_{e} \mid e \in E_{K_{A}}}
\subseteq 2^{A}.
\end{align*}
See \cite[Definition~4.11 and Theorem~4.13]{TTT24} for a proof that $\xi_{A}(\lambda)$ is a regular vine.

Define
$
\eta \colon \SpRV \longrightarrow \SpMAT
$
by
\begin{align*}
\eta_{A}(\mathcal{V})(a,b)
\coloneqq
\rk(\{a\}\vee \{b\}) -1.
\end{align*}
See \cite[Definition~5.16 and Theorem~5.17]{TTT24} for a proof that $\eta_{A}(\mathcal{V})$ is an MAT-labeling of $K_{A}$.

It was shown in \cite{TTT24} by a direct proof that $\xi$ and $\eta$ are natural isomorphisms and inverses of each other. 
Here, we give an alternative proof using Theorem~\ref{thm:AxCh}.

\begin{theorem}
\label{thm:graph-vine}
The natural transformations
$
\xi \colon \SpMAT \longrightarrow \SpRV
$
and
$
\eta \colon \SpRV \longrightarrow \SpMAT
$
commute with the splittings. 
In particular, by Theorem~\ref{thm:AxCh}, they are natural isomorphisms and inverses of each other.
\end{theorem}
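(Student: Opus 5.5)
We only need to check that $\xi$ and $\eta$ commute with the splittings. Once this holds, the uniqueness part of Theorem~\ref{thm:AxCh} finishes the argument: both $\eta\circ\xi$ and $\mathrm{id}_{\SpMAT}$ are natural transformations $\SpMAT\to\SpMAT$ commuting with $\sigma^{\SpMAT}$, so they coincide, and likewise $\xi\circ\eta=\mathrm{id}_{\SpRV}$. Naturality of $\xi$ and $\eta$ is evident, since both are defined purely in terms of the labels and the inclusion structure and are therefore compatible with relabelings. We check the commuting condition by induction on $|A|$; the case $|A|\le 1$ is trivial because everything is the identity.

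\emph{The map $\eta$.} Let $\mathcal V\in\SpRV[A]$ with $|A|\ge 2$ and $\sigma^{\SpRV}_A(\mathcal V)=\{\mathcal V_1,\mathcal V_2\}$, where $A=\{a_1\}\vee\{a_2\}$ and $\mathcal V_i$ is the principal ideal generated by $A_i$.

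\begin{enumerate}[(1)]
\item For $a,b\in A_i$, the join $\{a\}\vee\{b\}$ in $\mathcal V$ lies in $\mathcal V_i$: since $A_i\supseteq\{a,b\}$, the join is the minimal element containing $\{a,b\}$, which lies below $A_i$. Hence the ranks agree, and $\eta_A(\mathcal V)|_{E_{K_{A_i}}}=\eta_{A_i}(\mathcal V_i)$.
\item The edge $\{a_1,a_2\}$ gets label $\rk(A)-1=|A|-1$, which is the maximum possible label.
\item By Lemma~\ref{lem:splitting-MAT}, the MAT-simplicial vertices of $\eta_A(\mathcal V)$ are the endpoints of the edge with largest label. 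We must also check that this edge is unique. Every other edge lies in some $A_i$, where its join is a proper subset of $A$, so its label is at most $|A|-2$.
\end{enumerate}

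Thus $\sigma^{\SpMAT}_A(\eta_A(\mathcal V))=\{\eta_{A_1}(\mathcal V_1),\eta_{A_2}(\mathcal V_2)\}$, as required.

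\emph{The map $\xi$.} Let $\lambda\in\SpMAT[A]$ with MAT-simplicial vertices $a_1,a_2$, and $\lambda_i=\lambda|_{E_{K_{A_i}}}$. We need two facts.

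\begin{enumerate}[(1)]
\item The top element $A$ of $\xi_A(\lambda)$ is the principal clique of $\{a_1,a_2\}$. This edge has label $|A|-1$, so it forms $|A|-2$ triangles with lower-labelled edges, and $C_{\{a_1,a_2\}}=A$. Consequently $A$ covers $A_1$ and $A_2$.
\item The principal ideal of $\xi_A(\lambda)$ generated by $A_i$ equals $\xi_{A_i}(\lambda_i)$. For an edge $e\subseteq A_i$, the triangle count defining $C_e$ uses only vertices $c$ with $\lambda(c,\cdot)<\lambda(e)$. We must show that no such $c$ equals $a_i$. Equivalently, $C_e$ computed in $K_A$ agrees with $C_e$ computed in $K_{A_i}$.
\end{enumerate}

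For (2), we use that $a_i$ is MAT-simplicial. For $b,c$ adjacent to $a_i$ we have $\lambda(b,c)<\max\{\lambda(b,a_i),\lambda(c,a_i)\}$. So if $e=\{b,c\}$ formed a triangle with $a_i$ using edges of smaller label, both $\lambda(b,a_i)$ and $\lambda(c,a_i)$ would be below $\lambda(b,c)$, a contradiction. Therefore $\xi_A(\lambda)\cap 2^{A_i}=\xi_{A_i}(\lambda_i)$. Any set in $\xi_A(\lambda)$ contained in $A_i$ is a singleton or a $C_e$ with $e\subseteq A_i$, since $C_e\supseteq e$.

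Alternatively, (2) can be avoided: show that $\xi$ commutes with the splittings by noting $\eta\circ\xi=\mathrm{id}$ via \cite{TTT24}. However, the direct argument above is self-contained and preferable. The main obstacle is precisely this compatibility of principal cliques with restriction to $K_{A_i}$. If the MAT-simplicial inequality does not rule out $a_i$ cleanly, the fallback is to use the MAT-PEO of Theorem~\ref{thm:MAT-PEO} ending in $a_i$ and to observe that principal cliques of edges avoiding $a_i$ are computed within the earlier vertices.
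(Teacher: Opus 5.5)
Your proof is correct, but it is organized differently from the paper's. The paper proves commutation by recognizing each image as a merging.
\begin{itemize}
\item For $\xi$, it uses the induction hypothesis to produce $\mathcal V'$. It then applies Lemmas~\ref{lem:v'=v1capv2} and~\ref{lem:merging-vine} to see that $\mathcal V_1\cup\mathcal V_2\cup\{A\}$ is the vine merging of $\xi_{A_1}(\lambda_1)$ and $\xi_{A_2}(\lambda_2)$. Finally it asserts, with ``one can verify'', that $\xi_A(\lambda)$ equals this union.
\item For $\eta$, it checks that $\eta_A(\mathcal V)$ restricts to $\lambda_1,\lambda_2$ and invokes Lemma~\ref{lem:merge-MAT}.
\end{itemize}
You instead compute $\sigma$ of the image directly, from the definitions of the two splittings and Lemma~\ref{lem:splitting-MAT}. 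No inductive hypothesis and no merging lemma is actually used, so your ``by induction'' is vacuous.

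The two routes trade off as follows. You must take from \cite{TTT24} that $\eta_A(\mathcal V)$ is an MAT-labeling and $\xi_A(\lambda)$ is a regular vine before you can apply the splittings to them. In the paper's route, the merging lemmas certify membership in the target species along the way. In exchange, your MAT-simplicial argument is exactly the verification the paper leaves to the reader. Condition (3) at $a_i$ forbids $a_i$ from completing a lower-labelled triangle on an edge inside $A_i$, so $\xi_A(\lambda)\cap 2^{A_i}=\xi_{A_i}(\lambda_i)$.

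Two small repairs are needed.
\begin{itemize}
\item The label $|A|-1$ on $\{a_1,a_2\}$ needs a word. The edges at the MAT-simplicial vertex $a_1$ carry labels $1,\dots,|A|-1$, and $\{a_1,a_2\}$ carries the largest label.
\item ``$A$ covers $A_1$ and $A_2$'' does not follow from your (1) alone; it follows from your (2). Every edge $e\neq\{a_1,a_2\}$ lies in some $A_i$, so $C_e\subseteq A_i$ misses $a_i$. Hence $A$ is the only element of $\xi_A(\lambda)$ containing $\{a_1,a_2\}$, that is, $A=\{a_1\}\vee\{a_2\}$. This is exactly the pair the vine splitting of Example~\ref{ex:Vi} uses.
\end{itemize}
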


\begin{proof}
We proceed by induction on $|A|$. 
If $|A| \leq 2$, then $\xi_{A}$ and $\eta_{A}$ trivially commute with the splittings. 
Assume that $|A| \geq 3$.

Let $\lambda \in \SpMAT[A]$. 
Suppose that
$
\sigma^{\SpMAT}_{A}(\lambda)=\{\lambda_{1}, \lambda_{2}\}
$
and 
$
\sigma^{\SpMAT}_{A_{1}}(\lambda_{1})
\cap
\sigma^{\SpMAT}_{A_{2}}(\lambda_{2})
=
\{\lambda^{\prime}\}
$
(see Example~\ref{ex:Gr} and Remark \ref{rem:prime}).
Let
$
\mathcal{V}_{i} \coloneqq \xi_{A_{i}}(\lambda_{i})
$
for $i \in \{1,2\}$ and
$
\mathcal{V}^{\prime}
\coloneqq
\xi_{A^{\prime}}(\lambda^{\prime}).
$
By the induction hypothesis,
\[
\sigma^{\SpRV}_{A_{1}}(\mathcal{V}_{1})
\cap
\sigma^{\SpRV}_{A_{2}}(\mathcal{V}_{2})
=
\{\mathcal{V}^{\prime}\}.
\]
From the definition of $\xi$, we have
$
\mathcal{V}^{\prime}
\subseteq
\mathcal{V}_{1} \cap \mathcal{V}_{2}.
$
By Lemmas~\ref{lem:v'=v1capv2} and~\ref{lem:merging-vine}, the merging
$
\mathcal{V}_{1} \cup \mathcal{V}_{2} \cup \{A\}
$
is a regular vine. 
One can verify that
\[
\xi_{A}(\lambda)
=
\mathcal{V}_{1} \cup \mathcal{V}_{2} \cup \{A\}.
\]
Hence, $\xi$ commutes with the splittings.

Now let $\mathcal{V} \in \SpRV[A]$. 
Suppose that
$
\sigma^{\SpRV}_{A}(\mathcal{V})
=
\{\mathcal{V}_{1}, \mathcal{V}_{2}\}
$
and 
$
\sigma^{\SpRV}_{A_{1}}(\mathcal{V}_{1})
\cap
\sigma^{\SpRV}_{A_{2}}(\mathcal{V}_{2})
=
\{\mathcal{V}^{\prime}\}
$
(see Example~\ref{ex:Vi} and Remark \ref{rem:prime}).
Let
$
\lambda_{i}
\coloneqq
\eta_{A_{i}}(\mathcal{V}_{i})
$
for $i \in \{1,2\}$ and
$
\lambda^{\prime}
\coloneqq
\eta_{A^{\prime}}(\mathcal{V}^{\prime}).
$
Since
$
\mathcal{V}_{i} \supseteq \mathcal{V}^{\prime},
$
the restriction of $\lambda_{i}$ to the edge set of $K_{A^{\prime}}$ equals $\lambda^{\prime}$ for each $i \in \{1,2\}$. 
Therefore, by Lemma~\ref{lem:merge-MAT}, the labelings $\lambda_{1}$ and $\lambda_{2}$ admit a merging, and this merging equals $\eta_{A}(\mathcal{V})$. 
Hence, $\eta$ commutes with the splittings.
\end{proof}

\begin{example}
See Figure~\ref{fig:graph-vine} for an example of the correspondence in Theorem~\ref{thm:graph-vine}. 
For example, the node $abcd$ of rank $4$ in the vine is the join of $a$ and $d$; hence the edge $\{a,d\}$ in the graph receives the label $3$. 

\begin{figure}[htbp!]
\begin{subfigure}{.4\textwidth}
   \centering
   \begin{tikzpicture}[baseline=(current bounding box.center), scale=1, transform shape]
\draw (0, 0) node[](0){$a$};
\draw (1, 0) node[](1){$b$};
\draw (2, 0) node[](2){$c$};
\draw (3, 0) node[](3){$d$};
\draw (4, 0) node[](4){$e$};
\draw (0.5, 0.865) node[](01){$ab$};
\draw (0)--(01);
\draw (1)--(01);
\draw (1.5, 0.865) node[](12){$bc$};
\draw (1)--(12);
\draw (2)--(12);
\draw (2.5, 0.865) node[](23){$cd$};
\draw (2)--(23);
\draw (3)--(23);
\draw (3.5, 0.865) node[](24){$ce$};
\draw (2)--(24);
\draw (4)--(24);
\draw (1.0, 1.730) node[](012){$abc$};
\draw (01)--(012);
\draw (12)--(012);
\draw (2.0, 1.730) node[](123){$bcd$};
\draw (12)--(123);
\draw (23)--(123);
\draw (3.0, 1.730) node[](234){$cde$};
\draw (23)--(234);
\draw (24)--(234);
\draw (1.5, 2.595) node[](0123){$abcd$};
\draw (012)--(0123);
\draw (123)--(0123);
\draw (2.5, 2.595) node[](1234){$bcde$};
\draw (123)--(1234);
\draw (234)--(1234);
\draw (2.0, 3.460) node[](01234){$abcde$};
\draw (0123)--(01234);
\draw (1234)--(01234);
\end{tikzpicture}
\end{subfigure}%
\begin{subfigure}{.4\textwidth}
  \centering
  \begin{tikzpicture}[baseline=(current bounding box.center), scale=1, transform shape]
\draw (126: 15mm) node[c](0){$a$};
\draw (198: 15mm) node[c](1){$b$};
\draw (270: 15mm) node[c](2){$c$};
\draw (342: 15mm) node[c](3){$d$};
\draw (414: 15mm) node[c](4){$e$};
\draw[label1] (0)--node[midway, fill=white, inner sep=1pt] {\scalebox{.7}{1}} (1) ;
\draw[label2] (0)--node[midway, fill=white, inner sep=1pt] {\scalebox{.7}{\textcolor{blue}{2}}} (2);
\draw[label3] (0)--node[midway, fill=white, inner sep=1pt] {\scalebox{.7}{\textcolor{teal}{3}}}(3);
\draw[label4] (0)--node[midway, fill=white, inner sep=1pt] {\scalebox{.7}{\textcolor{orange}{4}}}(4);
\draw[label1] (1)--node[midway, fill=white, inner sep=1pt] {\scalebox{.7}{1}} (2);
\draw[label2] (1)--node[midway, fill=white, inner sep=1pt] {\scalebox{.7}{\textcolor{blue}{2}}} (3);
\draw[label3] (1)--node[midway, fill=white, inner sep=1pt] {\scalebox{.7}{\textcolor{teal}{3}}}(4);
\draw[label1] (2)--node[midway, fill=white, inner sep=1pt] {\scalebox{.7}{1}} (3);
\draw[label1] (2)--node[midway, fill=white, inner sep=1pt] {\scalebox{.7}{1}} (4);
\draw[label2] (3)--node[midway, fill=white, inner sep=1pt] {\scalebox{.7}{\textcolor{blue}{2}}}(4);
\end{tikzpicture}

\end{subfigure}
\caption{A regular vine (left) and the corresponding MAT-labeled complete graph (right) under the correspondence in Theorem~\ref{thm:graph-vine}.}
\label{fig:graph-vine}
\end{figure}
\end{example}

\subsection{Maximal ASPDs and MAT-labelings}
Let $\mathcal{D}$ be a domain of preferences on a finite set $A$. 
Two distinct elements $a,b \in A$ are called \textbf{contiguous} in $\mathcal{D}$ if there exists $\omega \in \mathcal{D}$ such that $\{a,b\} = \{\omega(i), \omega(i+1)\}$ for some $1 \le i \le n-1$.

\begin{lemma}\label{contiguousness}
Let \(\mathcal{D}\) be a maximal ASPD on a finite set \(A\). 
We use the diagrammatic representation of \(\mathcal{D}\) from Lemma~\ref{Slinko}:
\begin{align*}
\mathcal{D} = 
\begin{tikzpicture}[baseline=25]
\draw (0,0) rectangle (6,2);
\draw (0,.5) -- (6,.5);
\draw (3,0) -- (3,2);
\draw (1.5,0.2) node{$a_{1}$};
\draw (4.5,0.2) node{$a_{2}$};
\draw (1.5, 1.2) node{$\mathcal{D}_{1}$};
\draw (4.5, 1.2) node{$\mathcal{D}_{2}$};
\end{tikzpicture} 
= \begin{tikzpicture}[baseline=25]
\draw (0,0) rectangle (6,2);
\draw (0,.5) -- (6,.5);
\draw (3,0) -- (3,2);
\draw (1.5,0.2) node{$a_{1}$};
\draw (4.5,0.2) node{$a_{2}$};
\draw (1.5,.5) -- (1.5,2);
\draw (4.5,.5) -- (4.5,2);
\draw (0,1) -- (6,1);
\draw (0.75,.74) node{$b_{1}$};
\draw (2.25,.7) node{$a_{2}$};
\draw (3.75,.7) node{$a_{1}$};
\draw (5.25,.74) node{$b_{2}$};
\draw (2.25, 1.5) node{$\mathcal{D}^{\prime}$};
\draw (3.75, 1.5) node{$\mathcal{D}^{\prime}$};
\end{tikzpicture} \, ,
\end{align*}
where \(a_{1}, a_{2}\) are the bottom alternatives of \(\mathcal{D}\), \(b_1,b_2 \in A'=A\setminus\{a_{1},a_{2}\}\), and \(\mathcal{D}_{i}, \mathcal{D}^{\prime}\) are maximal ASPDs on \(A_i=A\setminus\{a_{i}\}\) and \(A'\), respectively.

Let \(x,y \in A\) be distinct alternatives. Then:
\begin{enumerate}[(1)]
\item\label{contiguousness 1} 
The alternatives \(x\) and \(y\) are contiguous in \(\mathcal{D}\).
\item\label{contiguousness 2} 
If \(x,y \in A'\), then a topmost contiguous occurrence of \(x\) and \(y\) appears in \(\mathcal{D}^{\prime}\).
\end{enumerate}
\end{lemma}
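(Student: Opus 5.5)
We prove both parts simultaneously by induction on $n=|A|$. For $n\le 3$ we check directly against the unique maximal ASPD $\mathcal{D}_3$ of Example~\ref{ex:rk4} (for $n=2$ the two orders $ab,ba$ make the claim trivial). So let $n\ge 4$. We use two facts from Lemma~\ref{Slinko}.

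\begin{itemize}
\item Every preference of $\mathcal{D}$ is of the form $\omega a_i$ with $\omega\in\mathcal{D}_i$. Hence positions $1,\dots,n-1$ of a preference of $\mathcal{D}$ coincide with the positions of the corresponding preference of $\mathcal{D}_i$. In particular, a contiguous pair of alternatives in $\mathcal{D}_i$ at positions $(j,j+1)$ remains contiguous at the same positions in $\mathcal{D}$.
\item The same holds one level down: the block $\mathcal{D}'$ occupies positions $1,\dots,n-2$ of the preferences ending in $a_2a_1$ or $a_1a_2$.
\end{itemize}

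For part~(\ref{contiguousness 1}) we split into cases.
\begin{itemize}
\item If $\{x,y\}=\{a_1,a_2\}$, any preference in the $\mathcal{D}'$-block under $a_2a_1$ exhibits them as contiguous at positions $(n-1,n)$.
\item If exactly one of $x,y$ is in $\{a_1,a_2\}$, say $x=a_1$ and $y\in A'$, then both lie in $A_2$. The domain $\mathcal{D}_2$ is a maximal ASPD on $n-1$ alternatives, so by the induction hypothesis $a_1,y$ are contiguous in $\mathcal{D}_2$, and hence in $\mathcal{D}$ by the first fact.
\item If $x,y\in A'$, apply induction to $\mathcal{D}'$ when $|A'|\ge 2$ and transfer the occurrence upward in the same way.
\end{itemize}

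For part~(\ref{contiguousness 2}), fix $x,y\in A'$. Take any contiguous occurrence of $x,y$ in $\mathcal{D}$ at positions $(i,i+1)$. Since neither alternative is $a_1$ or $a_2$, we have $i+1\le n-1$. By symmetry assume the preference ends in $a_1$, so the occurrence already lies in $\mathcal{D}_1$. Now apply the decomposition of Lemma~\ref{Slinko} to $\mathcal{D}_1$: its bottom alternatives are $b_1$ and $a_2$, and
\[
\mathcal{D}_1=\bigl[\,\mathcal{D}_1' \text{ over } b_1 \;\big|\; \mathcal{D}' \text{ over } a_2\,\bigr].
\]
Its second-level common block is a maximal ASPD on $A\setminus\{a_1,a_2,b_1\}$, which sits inside $\mathcal{D}'$ by the diagram in the statement.
\begin{itemize}
\item If $x,y\ne b_1$, the induction hypothesis (part~(2) for $\mathcal{D}_1$) gives an occurrence at height at most $i$ inside this second-level block, hence inside $\mathcal{D}'$.
\item If $x=b_1$ (or $y=b_1$), an occurrence not already in $\mathcal{D}'$ must lie in the $\mathcal{D}_1'$-block with $b_1$ in position $n-1$, so $i=n-2$. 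Part~(1) applied to $\mathcal{D}'$ provides an occurrence of $b_1,y$ inside $\mathcal{D}'$ at positions $(j,j+1)$ with $j+1\le n-2$, which is strictly higher.
\end{itemize}
Thus every occurrence is matched or beaten by one in $\mathcal{D}'$, and a topmost occurrence can be chosen in $\mathcal{D}'$.

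The main obstacle is the bookkeeping in part~(2): identifying the second-level common block of $\mathcal{D}_1$ as a sub-block of $\mathcal{D}'$. This requires that the decompositions of $\mathcal{D}_1$ and $\mathcal{D}_2$ are compatible through $\mathcal{D}'$ (that $\mathcal{D}'$ itself splits with bottom alternative $b_1$ on one side), and here I would invoke the proximity statement of Lemma~\ref{Slinko} carefully. A second point needing care is the edge case where $|A'|\le 2$ or $b_1=b_2$.
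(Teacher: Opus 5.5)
Your proposal is correct and follows the paper's proof essentially step for step. Part (1) goes by induction through \(\mathcal{D}_1\), \(\mathcal{D}_2\) or \(\mathcal{D}'\), and part (2) reduces to \(\mathcal{D}_1\) with the two cases \(b_1\in\{x,y\}\) (part (1) on \(\mathcal{D}'\) beats the occurrence at positions \((n-2,n-1)\)) and \(b_1\notin\{x,y\}\) (part (2) for \(\mathcal{D}_1\)). The point you flag as the main obstacle is immediate from Lemma~\ref{Slinko}(3) applied to \(\mathcal{D}_1\) alone: its \(a_2\)-block is \(\mathcal{D}'\), so its common block is the part of \(\mathcal{D}'\) ending in \(b_1\). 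Neither \(\mathcal{D}_2\) nor whether \(b_1=b_2\) plays any role, and your inductive step already covers \(n=4\), where the paper instead checks \(\mathcal{D}_{4,1}\) and \(\mathcal{D}_{4,2}\) directly.
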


\begin{proof}
We first prove part (\ref{contiguousness 1}) by induction on $|A|$. 
The case $|A| = 2$ is trivial. 
Suppose $|A| \geq 3$. 
If $\{x,y\} = \{a_{1}, a_{2}\}$, then $x$ and $y$ are  clearly contiguous in $\mathcal{D}$. 
Otherwise, by symmetry, we may assume that $a_{1} \not\in \{x,y\}$. 
By the induction hypothesis, $x$ and $y$ are contiguous in $\mathcal{D}_{1}$ hence in  $\mathcal{D}$. 

Now we show part (\ref{contiguousness 2}) also by induction on $|A|$. 
Suppose that $|A|=4$. 
Then there are exactly two maximal ASPDs $\mathcal{D}_{4,1}$ and $\mathcal{D}_{4,2}$ illustrated in Figure \ref{fig:ASPD-rk4} and the assertion is true for these domains. 
Suppose $|A| \geq 5$ and let $x,y \in A'$. 
It is suffices to show that a topmost contiguous occurrence of $x$ and $y$ in $\mathcal{D}_{1}$ appears in $\mathcal{D}'$. 

First, consider the case $b_{1} \in \{x,y\}$. 
Then, in $\mathcal{D}_{1}$, any contiguous occurrence of $x$ and $y$ in a preference whose bottom alternative is $b_{1}$ occurs at the bottom. 
From part (\ref{contiguousness 1}), at least one contiguous occurrence of $x$ and $y$ appears in $\mathcal{D}'$. 
Therefore, a topmost contiguous occurrence of $x$ and $y$ in $\mathcal{D}_{1}$ appears in $\mathcal{D}'$. 

Next, suppose that $b_{1} \not\in \{x,y\}$. 
Since $x$ and $y$ are not bottom alternatives of $\mathcal{D}_{1}$, a topmost contiguous occurrence of $x$ and $y$ in $\mathcal{D}_{1}$ appears in $\mathcal{D}_{1}'$ by the induction hypothesis, where 
\begin{align*}
\mathcal{D}_{1} = 
\begin{tikzpicture}[baseline=25]
\draw (0,0) rectangle (6,2);
\draw (0,.5) -- (6,.5);
\draw (3,0) -- (3,2);
\draw (1.5,0.24) node{$b_{1}$};
\draw (4.5,0.2) node{$a_{2}$};
\draw (4.5, 1.2) node{$\mathcal{D}'$};
\end{tikzpicture} 
= \begin{tikzpicture}[baseline=25]
\draw (0,0) rectangle (6,2);
\draw (0,.5) -- (6,.5);
\draw (3,0) -- (3,2);
\draw (1.5,0.24) node{$b_{1}$};
\draw (4.5,0.2) node{$a_{2}$};
\draw (1.5,.5) -- (1.5,2);
\draw (4.5,.5) -- (4.5,2);
\draw (0,1) -- (6,1);
\draw (2.25,.7) node{$a_{2}$};
\draw (3.75,.74) node{$b_{1}$};
\draw (2.25, 1.46) node{$\mathcal{D}_{1}^{\prime}$};
\draw (3.75, 1.46) node{$\mathcal{D}_{1}^{\prime}$};
\end{tikzpicture} \, . 
\end{align*}
Therefore, a topmost contiguous occurrence of $x$ and $y$ in $\mathcal{D}_{1}$ appears in $\mathcal{D}'$.
\end{proof}

Recall the notion of an MAT-perfect elimination ordering (MAT-PEO) of an MAT-labeled graph from Definition \ref{def:MAT-PEO}.

\begin{theorem}
\label{thm:graph-ASPD}
Define natural transformations $\xi \colon \SpMAT \longrightarrow \SpASPD$ and $\eta \colon \SpASPD \longrightarrow \SpMAT$ by 
\begin{align*}
\xi_{A}(\lambda) &\coloneqq \Set{\omega \in \mathcal{L}(A) | \text{$\omega$ is an MAT-PEO of $(K_{A},\lambda)$}}, \\
\eta_{A}(\mathcal{D})(a,b) &\coloneqq \min\Set{i \in [n - 1] | \text{there exists $\omega \in \mathcal{D}$ such that $\{a,b\} = \{\omega(i), \omega(i+1)\}$} }, 
\end{align*}
where $n\coloneqq |A|$. 
Then $\xi$ and $\eta$ commute with the splittings. 
In particular, by Theorem \ref{thm:AxCh}, they are natural isomorphisms and inverses of each other. 
\end{theorem}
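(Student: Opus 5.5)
We argue by induction on $|A|$, following the template of the proof of Theorem~\ref{thm:graph-vine}. The cases $|A|\le 2$ are trivial: both species consist of a single structure, and the splittings agree. So assume $|A|\ge 3$. The plan is to show that $\xi_A$ and $\eta_A$ land in the correct species and commute with the splittings. Theorem~\ref{thm:AxCh} then yields that they are mutually inverse natural isomorphisms. Naturality under relabelings is clear from the definitions, since both maps are defined purely set-theoretically.

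\emph{The map $\xi$.} Let $\lambda\in\SpMAT[A]$ with MAT-simplicial vertices $a_1,a_2$, the endpoints of the edge labeled $|A|-1$ (Lemma~\ref{lem:splitting-MAT}). By Definition~\ref{def:MAT-PEO}, $\omega$ is an MAT-PEO of $\lambda$ if and only if $\omega(n)$ is MAT-simplicial in $(K_A,\lambda)$ and the restriction of $\omega$ to the first $n-1$ positions is an MAT-PEO of the restricted graph. Hence
\[
\xi_A(\lambda)=\Set{\omega \mid \omega(n)=a_i,\ \omega_{A_i}\in \xi_{A_i}(\lambda_i),\ i\in\{1,2\}}.
\]
By induction, $\mathcal{D}_i\coloneqq\xi_{A_i}(\lambda_i)$ is a maximal ASPD, and $\xi$ commutes with splittings on smaller sets. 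The next step is to check the hypotheses of Lemma~\ref{merging ASPD}.
\begin{enumerate}[(1)]
\item The bottom alternatives of $\mathcal{D}_i$ are the MAT-simplicial vertices of $\lambda_i$. These are the endpoints of the edge with label $|A|-2$ in $\lambda_i$, and $a_{3-i}$ is one of them: by property (3) of MAT-simplicial $a_i$, the edges at $a_{3-i}$ other than $\{a_1,a_2\}$ carry all labels $1,\dots,|A|-2$.
\item $\mathcal{D}_{12}=\mathcal{D}_{21}=\xi_{A'}(\lambda')$, by the same recursive description applied to $\lambda_1,\lambda_2$.
\end{enumerate}
Lemma~\ref{merging ASPD} then shows that $\xi_A(\lambda)$ is the merging of $\mathcal{D}_1,\mathcal{D}_2$. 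Its bottom alternatives are $a_1,a_2$, and its restriction to $A_i$ is $\mathcal{D}_i$, since the preferences ending in $a_{3-i}$ restrict into $\mathcal{D}_{3-i,i}=\mathcal{D}'\subseteq\mathcal{D}_i$. So $\sigma^{\SpASPD}_A(\xi_A(\lambda))=\{\mathcal{D}_1,\mathcal{D}_2\}$, which is exactly the required commutativity.

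\emph{The map $\eta$.} Let $\mathcal{D}\in\SpASPD[A]$ with bottom alternatives $a_1,a_2$, written as in Lemma~\ref{contiguousness}. We first show that $\eta_A(\mathcal{D})$ restricted to $K_{A_i}$ equals $\eta_{A_i}(\mathcal{D}_i)$; this is the key step.
\begin{enumerate}[(1)]
\item Let $x,y\in A'$. By Lemma~\ref{contiguousness}(\ref{contiguousness 2}), a topmost contiguous occurrence of $x,y$ in $\mathcal{D}$ lies inside $\mathcal{D}'$. Such an occurrence sits at the same positions in $\mathcal{D}$, in $\mathcal{D}_i$, and in $\mathcal{D}'$, because $\mathcal{D}'$ occupies the top $n-2$ rows in each. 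So the minima defining the labels agree.
\item Let $x=a_{3-i}$ and $y\in A'$. Lemma~\ref{contiguousness}(\ref{contiguousness 1}) gives contiguity in $\mathcal{D}_i$. We must check that no occurrence in the other block, where $a_{3-i}$ sits at the bottom, is strictly higher than one in $\mathcal{D}_i$. In that block $a_{3-i}$ is last, so such an occurrence is at position $n-1$. Every occurrence inside $\mathcal{D}_i$ is at position at most $n-2$, since there $a_i$ occupies the last row.
\item The edge $\{a_1,a_2\}$ is contiguous only at the bottom: each $a_i$ appears only in rows $n-1$ and $n$ of the other half. Hence $\eta_A(\mathcal{D})(a_1,a_2)=n-1$.
\end{enumerate}
Thus $\eta_A(\mathcal{D})$ is exactly the merging labeling of Lemma~\ref{lem:merge-MAT} built from $\lambda_i=\eta_{A_i}(\mathcal{D}_i)$, which are MAT-labelings by induction and agree on $K_{A'}$ with $\eta_{A'}(\mathcal{D}')$. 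So $\eta_A(\mathcal{D})$ is an MAT-labeling. Its MAT-simplicial vertices are $a_1,a_2$, the endpoints of the top-labeled edge, so its splitting is $\{\lambda_1,\lambda_2\}$, as required.

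I expect the main obstacle to be the careful position bookkeeping for topmost contiguous occurrences in the $\eta$ part, especially the case involving $a_{3-i}$. A secondary obstacle is checking, in the $\xi$ part, that the bottom alternatives of $\mathcal{D}_i$ include $a_{3-i}$.
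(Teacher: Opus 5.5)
Your proposal follows the paper's proof in all essentials. For $\xi$ you peel off the last entry of an MAT-PEO and use Lemma~\ref{merging ASPD} to exhibit $\xi_A(\lambda)$ as the merging of $\xi_{A_1}(\lambda_1)$ and $\xi_{A_2}(\lambda_2)$ along $\xi_{A'}(\lambda')$. For $\eta$ you use Lemma~\ref{contiguousness} to identify $\eta_A(\mathcal{D})$ with the merging of Lemma~\ref{lem:merge-MAT}. Your position bookkeeping for the edges $\{a_{3-i},y\}$ is correct and more explicit than the paper's.

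Two justifications need repair, although the conclusions they support are true.

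\emph{Item (3) of the $\eta$ part.} It is false that $a_i$ appears only in rows $n-1$ and $n$ of the other half. For example, in $\mathcal{D}_{4,1}$ the bottom alternatives are $a$ and $d$, yet $a$ is ranked first in $abcd$, a preference ending in $d$. The correct reason is that every preference has $a_1$ or $a_2$ in row $n$. Hence any adjacency of $a_1,a_2$ occupies rows $n-1,n$, and such an adjacency exists because $\mathcal{D}'\neq\varnothing$.

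\emph{The splitting check in the $\xi$ part.} A preference ending in $a_{3-i}$ need not have $a_i$ in row $n-1$. So the definition of $\mathcal{D}_{3-i,i}$ alone does not show that its restriction to $A_i$ lies in $\mathcal{D}_i$. A cleaner argument: $\xi_A(\lambda)_{A_i}$ is an ASPD, since restrictions of ASPDs are ASPDs. It contains the maximal ASPD $\mathcal{D}_i$, so it equals $\mathcal{D}_i$.
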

\begin{proof}
Let $\lambda \in \SpMAT[A]$ be an MAT-labeling of the complete graph $K_{A}$. 
We first show  by induction on \(n= |A|\) that the map $\xi_A$ is well defined, i.e., 
\begin{align*}
\mathcal{D}\coloneqq\xi_A(\lambda) \in \SpASPD[A].
\end{align*}

The case $n \le 2$ is trivial. 
Suppose $n \geq 3$. 
Suppose that
$
\sigma^{\SpMAT}_{A}(\lambda)=\{\lambda_{1}, \lambda_{2}\}
$
and 
$
\sigma^{\SpMAT}_{A_{1}}(\lambda_{1})
\cap
\sigma^{\SpMAT}_{A_{2}}(\lambda_{2})
=
\{\lambda^{\prime}\}
$
(see Example~\ref{ex:Gr} and Remark \ref{rem:prime}).
Note that $A_{i} = A\setminus\{a_{i}\}$, where $a_{1},a_{2}$ are the MAT-simplicial vertices of $(K_{A}, \lambda)$.
Since every preference in $\mathcal{D}$ has either $a_{1}$ or $a_{2}$ as the bottom alternative, $\mathcal{D}$ can be partitioned as 
\begin{align*}
\mathcal{D} = 
\Set{\omega \in \mathcal{D} | \omega(n) = a_{1}} \cup \Set{\omega \in \mathcal{D} | \omega(n) = a_{2}}. 
\end{align*}

Since $a_{i}$ is MAT-simplicial in $(K_{A_{3-i}}, \lambda_{3-i})$ 
for $i \in \{1,2\}$, the map 
\begin{align*}
f \colon \Set{\omega \in \mathcal{D} | \omega(n-1) = a_{2},\, \omega(n) = a_{1}} \longrightarrow
\Set{\omega \in \mathcal{D} | \omega(n-1) = a_{1},\, \omega(n) = a_{2}}
\end{align*}
defined by 
\begin{align*}
f(\omega)(i) \coloneqq \begin{cases}
\omega(i) & \text{ if } 1 \leq i \leq n-2; \\
a_{1} & \text{ if } i = n-1; \\
a_{2} & \text{ if } i = n,
\end{cases}
\end{align*}
is a bijection. Then $\mathcal{D} $ can be written as 
\begin{align*}
\mathcal{D} = 
\begin{tikzpicture}[baseline=25]
\draw (0,0) rectangle (6,2);
\draw (0,.5) -- (6,.5);
\draw (3,0) -- (3,2);
\draw (1.5,0.2) node{$a_{1}$};
\draw (4.5,0.2) node{$a_{2}$};
\draw (1.5, 1.2) node{$\mathcal{D}_{1}$};
\draw (4.5, 1.2) node{$\mathcal{D}_{2}$};
\end{tikzpicture}
= \begin{tikzpicture}[baseline=25]
\draw (0,0) rectangle (6,2);
\draw (0,.5) -- (6,.5);
\draw (3,0) -- (3,2);
\draw (1.5,0.2) node{$a_{1}$};
\draw (4.5,0.2) node{$a_{2}$};
\draw (1.5,.5) -- (1.5,2);
\draw (4.5,.5) -- (4.5,2);
\draw (0,1) -- (6,1);
\draw (2.25,.7) node{$a_{2}$};
\draw (3.75,.7) node{$a_{1}$};
\draw (2.25, 1.5) node{$\mathcal{D}^{\prime}$};
\draw (3.75, 1.5) node{$\mathcal{D}^{\prime}$};
\end{tikzpicture} \, , 
\end{align*}
where $\mathcal{D}_{i} = \xi_{A_i}(\lambda_i)$ for $i \in \{1,2\}$ and $\mathcal{D}^{\prime}=\xi_{A'}(\lambda')$.
By the induction hypothesis, $\mathcal{D}_{i}$ and $\mathcal{D}^{\prime}$ are maximal ASPDs. 
Therefore, by Lemma \ref{merging ASPD}, $\mathcal{D}$ is the merging of $\mathcal{D}_{1}$ and $\mathcal{D}_{2}$. 
Hence, $\mathcal{D}  \in \SpASPD[A]$. 
This also shows that $\xi$ commutes with the splittings.

Now, let $\mathcal{D} \in \SpASPD[A]$.
We show by induction on \(n\) that the map $\eta_A$ is well defined, i.e., 
\begin{align*}
\lambda\coloneqq\eta_{A}(\mathcal{D}) \in \SpMAT[A].
\end{align*}
Note that the label $\lambda(a,b)$ of each edge $\{a,b\}$ of $K_A$ is well defined by Lemma~\ref{contiguousness}\eqref{contiguousness 1}.

The case $n \le 2$ is trivial. 
Suppose $n \geq 3$. 
Suppose that
$\sigma^{\SpASPD}_{A}(\mathcal{D}) = \{\mathcal{D}_{1}, \mathcal{D}_{2}\}$ 
and 
$\sigma^{\SpASPD}_{A_{1}}(\mathcal{D}_{1}) \cap \sigma^{\SpASPD}_{A_{2}}(\mathcal{D}_{2}) = \{\mathcal{D}^{\prime}\}$
(see Example~\ref{ex:mASPD} and Remark \ref{rem:prime}).
Then, by Lemma \ref{Slinko}, we may write
\begin{align*}
\mathcal{D} = 
\begin{tikzpicture}[baseline=25]
\draw (0,0) rectangle (6,2);
\draw (0,.5) -- (6,.5);
\draw (3,0) -- (3,2);
\draw (1.5,0.2) node{$a_{1}$};
\draw (4.5,0.2) node{$a_{2}$};
\draw (1.5, 1.2) node{$\mathcal{D}_{1}$};
\draw (4.5, 1.2) node{$\mathcal{D}_{2}$};
\end{tikzpicture}
= \begin{tikzpicture}[baseline=25]
\draw (0,0) rectangle (6,2);
\draw (0,.5) -- (6,.5);
\draw (3,0) -- (3,2);
\draw (1.5,0.2) node{$a_{1}$};
\draw (4.5,0.2) node{$a_{2}$};
\draw (1.5,.5) -- (1.5,2);
\draw (4.5,.5) -- (4.5,2);
\draw (0,1) -- (6,1);
\draw (2.25,.7) node{$a_{2}$};
\draw (3.75,.7) node{$a_{1}$};
\draw (2.25, 1.5) node{$\mathcal{D}^{\prime}$};
\draw (3.75, 1.5) node{$\mathcal{D}^{\prime}$};
\end{tikzpicture} \, .
\end{align*}
By the induction hypothesis, $\lambda_i\coloneqq\eta_{A_i}(\mathcal{D}_{i})$ for each $i \in \{1,2\}$ and $\lambda'\coloneqq\eta_{A'}(\mathcal{D}^{\prime})$ are MAT-labeling of the complete graphs $K_{A_i}$ and $K_{A'}$, respectively. 

Clearly, $\lambda(a_{1},a_{2}) = n-1$. 
If $x \in A'$, then 
$$\lambda(a_{3-i},x) = \lambda_i(a_{3-i},x).$$ 
If $x,y \in A'$ and $x \ne y$, then by Lemma \ref{contiguousness}(\ref{contiguousness 2}), 
$$\lambda(x,y) = \lambda'(x,y).
$$
Therefore, by Lemma \ref{lem:merge-MAT}, $\lambda $ is the merging of $\lambda_1$  and $\lambda_2$. Hence, $\lambda\in \SpMAT[A]$. 
This also proves that $\eta$ commutes with the splittings.

\end{proof}

\begin{example}
See Figure~\ref{fig:graph-ASPD} for an example of the correspondence in Theorem \ref{thm:graph-ASPD}. 
Every preference in the domain is an MAT-PEO of the graph. 
For the converse, the label of an edge is determined by the position of topmost occurrence of the endpoints. 
For instance, the edge \(\{a,c\}\) labeled by \(2\) in the graph indicates that the topmost occurrence of \(a\) and \(c\) as contiguous alternatives in the domain appears at position \(2\).
\end{example}

 \begin{figure}[htbp!]

\begin{tikzpicture}[baseline=(current bounding box.center), scale=1, transform shape]
\draw (126: 15mm) node[c](0){$a$};
\draw (198: 15mm) node[c](1){$b$};
\draw (270: 15mm) node[c](2){$c$};
\draw (342: 15mm) node[c](3){$d$};
\draw (414: 15mm) node[c](4){$e$};
\draw[label1] (0)--node[midway, fill=white, inner sep=1pt] {\scalebox{.7}{1}} (1) ;
\draw[label2] (0)--node[midway, fill=white, inner sep=1pt] {\scalebox{.7}{\textcolor{blue}{2}}} (2);
\draw[label3] (0)--node[midway, fill=white, inner sep=1pt] {\scalebox{.7}{\textcolor{teal}{3}}}(3);
\draw[label4] (0)--node[midway, fill=white, inner sep=1pt] {\scalebox{.7}{\textcolor{orange}{4}}}(4);
\draw[label1] (1)--node[midway, fill=white, inner sep=1pt] {\scalebox{.7}{1}} (2);
\draw[label2] (1)--node[midway, fill=white, inner sep=1pt] {\scalebox{.7}{\textcolor{blue}{2}}} (3);
\draw[label3] (1)--node[midway, fill=white, inner sep=1pt] {\scalebox{.7}{\textcolor{teal}{3}}}(4);
\draw[label1] (2)--node[midway, fill=white, inner sep=1pt] {\scalebox{.7}{1}} (3);
\draw[label1] (2)--node[midway, fill=white, inner sep=1pt] {\scalebox{.7}{1}} (4);
\draw[label2] (3)--node[midway, fill=white, inner sep=1pt] {\scalebox{.7}{\textcolor{blue}{2}}}(4);
\end{tikzpicture}
\qquad
\begin{tikzpicture}[baseline=0]
\draw[label1, fill=black!50!white, fill opacity=0.2] (-4.5,0.3) rectangle (-4.06,1.2);
\draw[label1, fill=black!50!white, fill opacity=0.2] (-3.35,0.3) rectangle (-2.95,1.2);
\draw[label1, fill=black!50!white, fill opacity=0.2] (-1.05,0.3) rectangle (-0.66,1.2);
\draw[label1, fill=black!50!white, fill opacity=0.2] (3.52,0.3) rectangle (3.9,1.2);
\draw[label2, fill=blue!50!white, fill opacity=0.2] (-3.9,-0.2) rectangle (-3.53,0.75);
\draw[label2, fill=blue!50!white, fill opacity=0.2] (-1.62,-0.2) rectangle (-1.23,0.75);
\draw[label2, fill=blue!50!white, fill opacity=0.2] (2.4,-0.2) rectangle (2.8,0.75);
\draw[label3, fill=teal!50!white, fill opacity=0.2] (-2.8,-0.7) rectangle (-2.35,0.2);
\draw[label3, fill=teal!50!white, fill opacity=0.2] (2.95,-0.7) rectangle (3.35,0.2);
\draw[label4, fill=orange!50!white, fill opacity=0.2] (-2.2,-1.2) rectangle (-1.8,-0.25);
\draw (0,0) node{$\begin{array}{|cccc|cccc|cccc|cccc|}
\hline
a&b&b& c & b&c&c&d & b&c&c& d & c&d&c&e \\
b&a&c& b & c&b&d&c & c&b&d& c & d&c&e&c \\
c&c&a& a & d&d&b&b & d&d&b& b & e&e&d&d \\ 
d&d&d& d & a&a&a&a & e&e&e& e & b&b&b&b \\ 
e&e&e&e & e&e&e&e & a&a&a&a & a&a&a&a \\ \hline
\end{array}$};
\end{tikzpicture}

\caption{An MAT-labeled graph (left) and its corresponding maximal ASPD (right) under the correspondence in Theorem~\ref{thm:graph-ASPD}. 
}
\label{fig:graph-ASPD}
\end{figure}

   %********************************************************************************************************

\subsection{Maximal ASPDs and regular vines}

\begin{theorem}\label{thm:vine-ASPD}
Define natural transformations $\xi \colon \SpRV \longrightarrow \SpASPD$ and $\eta \colon \SpASPD \longrightarrow \SpRV$ by 
\begin{align*}
\xi_{A}(\mathcal{V}) &\coloneqq \Set{ \omega \in \mathcal{L}(A) | \{\omega(1)\} \subseteq \{\omega(1), \omega(2)\} \subseteq \dots \subseteq \{\omega(1), \dots, \omega(n)\} \text{ is a maximal chain in $\mathcal{V}$} }, \\
\eta_{A}(\mathcal{D}) &\coloneqq \Set{\{\omega(1), \dots, \omega(k)\} \in 2^{A} | \omega \in \mathcal{D}, \,1 \leq k \leq n}, 
\end{align*}
where $n \coloneqq |A|$. 
Then $\xi$ and $\eta$ commute with the splittings. 
In particular, by Theorem \ref{thm:AxCh}, they are natural isomorphisms and inverses of each other. 
\end{theorem}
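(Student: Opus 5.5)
We follow the template of the proofs of Theorems~\ref{thm:graph-vine} and~\ref{thm:graph-ASPD}: by induction on $n=|A|$ we show simultaneously that $\xi_A$ and $\eta_A$ are well defined (land in $\SpASPD[A]$ and $\SpRV[A]$ respectively) and that they commute with the splittings. Theorem~\ref{thm:AxCh} then gives that both are natural isomorphisms and mutually inverse. Naturality itself is clear, since both maps are defined purely set-theoretically from $A$. The cases $n\le 2$ are trivial, so assume $n\ge 3$.

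\emph{The map $\xi$.} Let $\mathcal{V}\in\SpRV[A]$ with $\sigma^{\SpRV}_A(\mathcal{V})=\{\mathcal{V}_1,\mathcal{V}_2\}$, where $A$ covers $A_1,A_2$, and let $\mathcal{V}'=\mathcal{V}_1\cap\mathcal{V}_2$. A maximal chain of $\mathcal{V}$ read as $\omega$ passes through $A_1$ or $A_2$, so $\omega(n)\in\{a_1,a_2\}$. The chains through $A_i$ are exactly the maximal chains of $\mathcal{V}_i$ extended by $A$. Hence
\[
\xi_A(\mathcal{V})=\{\omega\mid \omega(n)=a_1,\ \omega_{A_1}\in\xi_{A_1}(\mathcal{V}_1)\}\cup\{\omega\mid\omega(n)=a_2,\ \omega_{A_2}\in\xi_{A_2}(\mathcal{V}_2)\}.
\]
By induction $\mathcal{D}_i:=\xi_{A_i}(\mathcal{V}_i)$ and $\mathcal{D}':=\xi_{A'}(\mathcal{V}')$ are maximal ASPDs. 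By the inductive commuting of $\xi$ with splittings applied to $A_i$, together with the proximity property (Remark~\ref{rem:prime}), $a_{3-i}$ is a bottom alternative of $\mathcal{D}_i$ and $\mathcal{D}_{12}=\mathcal{D}_{21}=\mathcal{D}'$. This identification is the point where the induction must be set up carefully: we need that the splitting of $\mathcal{D}_i$ is $\{\xi(\sigma\text{-pieces of }\mathcal{V}_i)\}$ and that $A'$ is the common piece. Lemma~\ref{merging ASPD}(2) then shows that $\xi_A(\mathcal{V})$ is a maximal ASPD whose bottom alternatives are $a_1,a_2$, and its restrictions $\mathcal{D}_{A_i}$ equal $\mathcal{D}_i$. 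Therefore $\sigma^{\SpASPD}_A(\xi_A(\mathcal{V}))=\{\xi_{A_1}(\mathcal{V}_1),\xi_{A_2}(\mathcal{V}_2)\}$.

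\emph{The map $\eta$.} Let $\mathcal{D}\in\SpASPD[A]$ with bottom alternatives $a_1,a_2$, and write it as in Lemma~\ref{Slinko} with $\mathcal{D}_i,\mathcal{D}'$. Every initial segment $\{\omega(1),\dots,\omega(k)\}$ with $k<n$ of a preference with $\omega(n)=a_i$ is an initial segment of a preference in $\mathcal{D}_i$. Conversely, each preference of $\mathcal{D}_i$ occurs there with $a_i$ appended. Hence
\[
\eta_A(\mathcal{D})=\eta_{A_1}(\mathcal{D}_1)\cup\eta_{A_2}(\mathcal{D}_2)\cup\{A\}.
\]
By induction $\mathcal{V}_i:=\eta_{A_i}(\mathcal{D}_i)$ and $\mathcal{V}':=\eta_{A'}(\mathcal{D}')$ are regular vines. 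Since $\mathcal{D}'$ appears inside $\mathcal{D}_i$ as the preferences ending in $a_{3-i}$, the same argument gives $\mathcal{V}'\subseteq\mathcal{V}_1\cap\mathcal{V}_2$. Lemma~\ref{lem:v'=v1capv2} then gives $\mathcal{V}'=\mathcal{V}_1\cap\mathcal{V}_2$, and Lemma~\ref{lem:merging-vine} shows that $\eta_A(\mathcal{D})$ is a regular vine. Its top element $A$ covers exactly $A_1$ and $A_2$, and the principal ideal generated by $A_i$ is $\mathcal{V}_i$, because any element of $\eta_A(\mathcal{D})$ contained in $A_i$ lies in $\mathcal{V}_i$ (elements of $\mathcal{V}_{3-i}$ not in $\mathcal{V}_i$ contain $a_i$ by the cardinality count in Proposition~\ref{prop:card-R-vine}). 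Thus $\sigma^{\SpRV}_A(\eta_A(\mathcal{D}))=\{\mathcal{V}_1,\mathcal{V}_2\}$.

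The main obstacle is the bookkeeping in the $\xi$ direction: verifying the hypotheses of Lemma~\ref{merging ASPD}, namely that $a_{3-i}$ is bottom in $\mathcal{D}_i$ and that $\mathcal{D}_{12}=\mathcal{D}_{21}$. We plan to obtain both from the vine side. In $\mathcal{V}_1$, the set $A'$ is covered by $A_1$, so $A_1$ covers $A'$ and $A_1\setminus\{b_1\}$. Hence the chains of $\mathcal{V}_1$ ending with $a_2$ are exactly those through $A'$, and they read off $\xi_{A'}(\mathcal{V}')$. The same holds for $\mathcal{V}_2$, which gives both conditions directly without appealing to commutation.
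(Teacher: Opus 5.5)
Your proposal is correct and follows essentially the same route as the paper. In both directions you decompose along $A_1,A_2$ (respectively the bottom alternatives $a_1,a_2$), identify the pieces as images of the split pieces, and conclude with Lemma~\ref{merging ASPD} or with Lemmas~\ref{lem:v'=v1capv2} and~\ref{lem:merging-vine}, while also filling in details the paper calls ``easy to see'', such as the principal ideal of $A_i$ in $\eta_A(\mathcal{D})$ being $\mathcal{V}_i$. You assert without justification that the restriction $\mathcal{D}_{A_i}$ equals the set of restricted preferences ending in $a_i$; this follows at once because restrictions of ASPDs are ASPDs and that set is already a maximal ASPD.
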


\begin{proof}
Let $\mathcal{V}\in \SpRV[A]$ be a regular vine on $A$. 
We first show  by induction on \(n\) that the map $\xi_A$ is well defined, i.e., 
\begin{align*}
\mathcal{D}\coloneqq\xi_{A}(\mathcal{V}) \in \SpASPD[A].
\end{align*}

The case $n \le 2$ is trivial. 
Suppose $n \geq 3$. 
Assume that $\sigma^{\SpRV}_{A}(\mathcal{V}) = \{\mathcal{V}_{1}, \mathcal{V}_{2}\}$ and $\sigma^{\SpRV}_{A_{1}}(\mathcal{V}_{1}) \cap \sigma^{\SpRV}_{A_{2}}(\mathcal{V}_{2}) = \{\mathcal{V}^{\prime}\}$ 
(see Example~\ref{ex:Vi} and Remark \ref{rem:prime}).
By the induction hypothesis, $\mathcal{D}_i\coloneqq\xi_{A_i}(\mathcal{V}_i) \in \SpASPD[A_i]$ and $\mathcal{D}'\coloneqq\xi_{A'}(\mathcal{V}') \in \SpASPD[A']$. 
Since every maximal chain of $\mathcal{V}$ passes through either $A_{1}$ or $A_{2}$, and since the maximal chains of $\mathcal{V}_{1}$ and $\mathcal{V}_{2}$ passing through $A'$ coincide except for their top elements, \(\mathcal{D}\) can be written as 
\begin{align*}
\mathcal{D}= 
\begin{tikzpicture}[baseline=25]
\draw (0,0) rectangle (6,2);
\draw (0,.5) -- (6,.5);
\draw (3,0) -- (3,2);
\draw (1.5,0.2) node{$a_{1}$};
\draw (4.5,0.2) node{$a_{2}$};
\draw (1.5, 1.2) node{$\mathcal{D}_1$};
\draw (4.5, 1.2) node{$\mathcal{D}_2$};
\end{tikzpicture}
= \begin{tikzpicture}[baseline=25]
\draw (0,0) rectangle (6,2);
\draw (0,.5) -- (6,.5);
\draw (3,0) -- (3,2);
\draw (1.5,0.2) node{$a_{1}$};
\draw (4.5,0.2) node{$a_{2}$};
\draw (1.5,.5) -- (1.5,2);
\draw (4.5,.5) -- (4.5,2);
\draw (0,1) -- (6,1);
\draw (2.25,.7) node{$a_{2}$};
\draw (3.75,.7) node{$a_{1}$};
\draw (2.25, 1.5) node{$\mathcal{D}'$};
\draw (3.75, 1.5) node{$\mathcal{D}'$};
\end{tikzpicture}, 
\end{align*}
Therefore, by Lemma \ref{merging ASPD}, $\mathcal{D}$ is the merging of $\mathcal{D}_1$ and $\mathcal{D}_{2}$. 
Hence,  $\mathcal{D} \in \SpASPD[A]$. 
This also proves that $\xi$ commutes with the splittings.

Now, let $\mathcal{D} \in \SpASPD[A]$.
 We show  by induction on \(n\) that the map $\eta_A$ is well defined, i.e., 
\begin{align*}
\mathcal{V}\coloneqq\eta_{A}(\mathcal{D}) \in \SpASPD[A].
\end{align*}

The case $n \le 2$ is trivial. 
Suppose $n \geq 3$. 
Let $\sigma^{\SpASPD}_{A}(\mathcal{D}) = \{\mathcal{D}_{1}, \mathcal{D}_{2}\}$ and $\sigma^{\SpASPD}_{A_{1}}(\mathcal{D}_{1}) \cap \sigma^{\SpASPD}_{A_{2}}(\mathcal{D}_{2}) = \{\mathcal{D}^{\prime}\}$
(see Example~\ref{ex:mASPD} and Remark \ref{rem:prime}).
Then, by Lemma \ref{Slinko}, we may write
\begin{align*}
\mathcal{D} = 
\begin{tikzpicture}[baseline=25]
\draw (0,0) rectangle (6,2);
\draw (0,.5) -- (6,.5);
\draw (3,0) -- (3,2);
\draw (1.5,0.2) node{$a_{1}$};
\draw (4.5,0.2) node{$a_{2}$};
\draw (1.5, 1.2) node{$\mathcal{D}_{1}$};
\draw (4.5, 1.2) node{$\mathcal{D}_{2}$};
\end{tikzpicture}
= \begin{tikzpicture}[baseline=25]
\draw (0,0) rectangle (6,2);
\draw (0,.5) -- (6,.5);
\draw (3,0) -- (3,2);
\draw (1.5,0.2) node{$a_{1}$};
\draw (4.5,0.2) node{$a_{2}$};
\draw (1.5,.5) -- (1.5,2);
\draw (4.5,.5) -- (4.5,2);
\draw (0,1) -- (6,1);
\draw (2.25,.7) node{$a_{2}$};
\draw (3.75,.7) node{$a_{1}$};
\draw (2.25, 1.5) node{$\mathcal{D}^{\prime}$};
\draw (3.75, 1.5) node{$\mathcal{D}^{\prime}$};
\end{tikzpicture} \, .
\end{align*}

By the induction hypothesis, $\mathcal{V}_{i}\coloneqq\eta_{A_i}(\mathcal{D}_{i})$ for $i \in \{1,2\}$ and $\mathcal{V}'\coloneqq\eta_{A'}(\mathcal{D}^{\prime})$ are regular vines on $A_i$ and $A'$, respectively. 
It is easy to see that 
\begin{align*}
\mathcal{V}' \subseteq \mathcal{V}_1 \cap \mathcal{V}_2 \quad \text{and} \quad \mathcal{V} = \mathcal{V}_1 \cup \mathcal{V}_2 \cup \{A\}.
\end{align*}
Therefore, by Lemmas \ref{lem:v'=v1capv2} and \ref{lem:merging-vine}, $\mathcal{V}$ is the merging of $ \mathcal{V}_1$  and $ \mathcal{V}_2$,  hence a regular vine. 
This also shows that $\eta$ commutes with the splittings. 
\end{proof}

The following is an immediate consequence of Theorem \ref{thm:vine-ASPD}.

\begin{corollary}\label{orders in maxASPD <-> max chains of R-vine}
Let $\mathcal{D}$ be a maximal ASPD and let $\mathcal{V}$ be the regular vine corresponding to $\mathcal{D}$ under the isomorphism in  Theorem \ref{thm:vine-ASPD}. 
Then for every $\omega \in \mathcal{D}$, 
\begin{align*}
\{\omega(1)\} \subseteq \{\omega(1), \omega(2)\} \subseteq \dots \subseteq \{\omega(1), \omega(2), \dots, \omega(n)\}
\end{align*}
is a maximal chain in $\mathcal{V}$. 
Conversely, for every maximal chain $C_{1} \subseteq \dots \subseteq C_{n}$ in $\mathcal{V}$, 
there exists (a unique) $\omega \in \mathcal{D}$ such that for all $1 \le k \le n$,
$$C_k= \{\omega(1), \dots, \omega(k)\}.$$
As a consequence, the isomorphism gives a one-to-one correspondence between $\mathcal{D}$ and the set of maximal chains of $\mathcal{V}$. 
\end{corollary}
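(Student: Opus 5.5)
The plan is to read this off from Theorem~\ref{thm:vine-ASPD}, which gives $\mathcal{D}=\xi_{A}(\mathcal{V})$ and $\mathcal{V}=\eta_{A}(\mathcal{D})$ with $\xi,\eta$ mutually inverse. By the definition of $\xi_{A}$, a preference $\omega$ belongs to $\xi_{A}(\mathcal{V})$ exactly when its chain of initial segments
\[
\{\omega(1)\}\subseteq\{\omega(1),\omega(2)\}\subseteq\cdots\subseteq\{\omega(1),\dots,\omega(n)\}
\]
is a maximal chain of $\mathcal{V}$. Since $\mathcal{D}=\xi_A(\mathcal{V})$, this gives the first assertion immediately for every $\omega\in\mathcal{D}$.

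For the converse, I take a maximal chain $C_1\subseteq\cdots\subseteq C_n$ of $\mathcal{V}$. By Definition~\ref{def:regular vine; poset}, the minimal elements of $\mathcal{V}$ are singletons and $\mathcal{V}$ is graded by cardinality. Hence $|C_k|=k$ and $C_{k}\setminus C_{k-1}$ is a single element; I call it $\omega(k)$, with $C_1=\{\omega(1)\}$. This defines a bijection $\omega\colon[n]\to A$ with $C_k=\{\omega(1),\dots,\omega(k)\}$ for all $k$. By the defining condition of $\xi_A$, we get $\omega\in\xi_A(\mathcal{V})=\mathcal{D}$.

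Uniqueness holds because the chain determines $\omega$ via $\omega(k)\in C_k\setminus C_{k-1}$. Thus $\omega\mapsto(\text{initial-segment chain})$ is injective, since distinct preferences differ at some first position $k$ and so give different $C_k$. It is also surjective onto the maximal chains by the previous paragraph, which yields the claimed one-to-one correspondence.

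The only point needing care is the grading fact $|C_k|=k$ along any maximal chain. This comes from conditions (1)–(2) of Definition~\ref{def:regular vine; poset}: all maximal chains have length $n-1$, ranks equal cardinalities, and minimal elements have rank $1$. Beyond this, the statement is a direct unwinding of the definitions of $\xi$ together with Theorem~\ref{thm:vine-ASPD}, so I expect no real obstacle.
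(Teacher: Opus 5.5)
Your proposal is correct and takes the same approach as the paper, which simply calls this corollary an immediate consequence of Theorem~\ref{thm:vine-ASPD}. Since $\xi$ and $\eta$ are mutually inverse, $\mathcal{D}=\xi_A(\mathcal{V})$, and unwinding the definition of $\xi_A$ together with the grading of $\mathcal{V}$ by cardinality (so $|C_k|=k$ along any maximal chain, a detail the paper leaves implicit) gives exactly the stated bijection.
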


\begin{example}
See Figure~\ref{fig:vine-ASPD} for an example of the correspondence in Theorem \ref{thm:vine-ASPD}.
For instance, the maximal chain $c \subseteq cd \subseteq bcd \subseteq bcde \subseteq abcde$ in the vine corresponds to the preference $cdbea$ in the domain.
\end{example}

 \begin{figure}[htbp!]
   \centering
\begin{tikzpicture}[baseline=(current bounding box.center), scale=1, transform shape]
\draw[rounded corners=2mm, blue, thick, fill=blue!50!white, fill opacity=0.2] (1.5,-0.2)--(2.3,-0.2)--(2.9,0.85)--(2.4,1.7)--(3.05,2.5)--(2.5,3.7)--(1.35,3.7)--(1.35,3.3)--(2.1,2.65)--(1.5,1.7)--(2.1,0.85)--cycle;
\draw (0, 0) node[](0){$a$};
\draw (1, 0) node[](1){$b$};
\draw (2, 0) node[](2){$c$};
\draw (3, 0) node[](3){$d$};
\draw (4, 0) node[](4){$e$};
\draw (0.5, 0.865) node[](01){$ab$};
\draw (0)--(01);
\draw (1)--(01);
\draw (1.5, 0.865) node[](12){$bc$};
\draw (1)--(12);
\draw (2)--(12);
\draw (2.5, 0.865) node[](23){$cd$};
\draw (2)--(23);
\draw (3)--(23);
\draw (3.5, 0.865) node[](24){$ce$};
\draw (2)--(24);
\draw (4)--(24);
\draw (1.0, 1.730) node[](012){$abc$};
\draw (01)--(012);
\draw (12)--(012);
\draw (2.0, 1.730) node[](123){$bcd$};
\draw (12)--(123);
\draw (23)--(123);
\draw (3.0, 1.730) node[](234){$cde$};
\draw (23)--(234);
\draw (24)--(234);
\draw (1.5, 2.595) node[](0123){$abcd$};
\draw (012)--(0123);
\draw (123)--(0123);
\draw (2.5, 2.595) node[](1234){$bcde$};
\draw (123)--(1234);
\draw (234)--(1234);
\draw (2.0, 3.460) node[](01234){$abcde$};
\draw (0123)--(01234);
\draw (1234)--(01234);
\end{tikzpicture}
\qquad
\begin{tikzpicture}[baseline=0]
\draw[blue, thick, fill=blue!50!white, fill opacity=0.2] (1.15,-1.18) rectangle (1.7,1.18);
\draw (0,0) node{$\begin{array}{|cccc|cccc|cccc|cccc|}
\hline
a&b&b& c & b&c&c&d & b&c&c& d & c&d&c&e \\
b&a&c& b & c&b&d&c & c&b&d& c & d&c&e&c \\
c&c&a& a & d&d&b&b & d&d&b& b & e&e&d&d \\ 
d&d&d& d & a&a&a&a & e&e&e& e & b&b&b&b \\ 
e&e&e&e & e&e&e&e & a&a&a&a & a&a&a&a \\ \hline
\end{array}$};
\end{tikzpicture}
\caption{A regular vine (left) and the corresponding maximal ASPD (right) under the correspondence in Theorem~\ref{thm:vine-ASPD}. }
\label{fig:vine-ASPD}
\end{figure}

%********************************************************************************************************
\section{Applications to social choice theory}
\label{sec:appl-sct}

Theorem~\ref{thm:AxCh} characterizes maximal ASPDs through two structural axioms. 
Combined with the correspondences established in Theorems~\ref{thm:graph-ASPD} and~\ref{thm:vine-ASPD}, this characterization yields concrete combinatorial representations of maximal ASPDs via MAT-labeled complete graphs and regular vines. 
In particular, the representation by regular vines naturally leads to an explicit formula for the number of non-isomorphic maximal ASPDs (Corollary~\ref{cor:key-seq}).

These results provide our main applications to social choice. 
In this section we present three further consequences obtained from the correspondence with regular vines in Theorem~\ref{thm:vine-ASPD}.

Throughout this section, let $\mathcal{D}$ be a maximal ASPD on a finite set $A$, and let $\mathcal{V}$ denote the regular vine corresponding to $\mathcal{D}$ under the isomorphism $\eta$ in Theorem~\ref{thm:vine-ASPD}.

%********************************************************************************************************

\subsection{Maximal BSPDs and D-vines}

We show that maximal BSPDs correspond to D-vines under the isomorphism in Theorem~\ref{thm:vine-ASPD}, thereby giving a poset characterization of these domains. This correspondence further illustrates the close alignment between the theory of single-peaked domains and that of regular vines. 

\begin{proposition}
\label{prop:BSPD-Dvine}
Let $\mathcal{D}$ be a maximal ASPD and $\mathcal{V}$ the corresponding regular vine. 
Then $\mathcal{D}$ is a maximal BSPD if and only if $\mathcal{V}$ is a D-vine. 
\end{proposition}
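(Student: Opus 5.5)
The plan is to combine the characterization of maximal BSPDs in Proposition~\ref{prop:BSPD} with the chain characterization of D-vines in Proposition~\ref{prop:Dvine-chain}. The bridge between them is Corollary~\ref{orders in maxASPD <-> max chains of R-vine}, which matches preferences $\omega\in\mathcal{D}$ with maximal chains $\{\omega(1)\}\subseteq\{\omega(1),\omega(2)\}\subseteq\cdots$ of $\mathcal{V}$. Since $\mathcal{D}$ is already a maximal ASPD, Proposition~\ref{prop:BSPD} reduces the claim to the following: $\mathcal{D}$ contains a pair $\omega,\omega^{\rev}$ if and only if $\mathcal{V}$ is a D-vine. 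For $n\le 1$ the statement is trivial, so I will assume $n\ge 2$.

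\emph{($\Rightarrow$)} Suppose $\omega,\omega^{\rev}\in\mathcal{D}$, and set $c_i\coloneqq\omega(i)$. By Corollary~\ref{orders in maxASPD <-> max chains of R-vine}, the preference $\omega$ gives the maximal chain $\{c_1\}\subseteq\{c_1,c_2\}\subseteq\cdots\subseteq A$ in $\mathcal{V}$. Likewise $\omega^{\rev}=c_n c_{n-1}\cdots c_1$ gives the maximal chain $\{c_n\}\subseteq\{c_{n-1},c_n\}\subseteq\cdots\subseteq A$. These are exactly the two chains required in Proposition~\ref{prop:Dvine-chain}, so $\mathcal{V}$ is a D-vine.

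\emph{($\Leftarrow$)} Suppose $\mathcal{V}$ is a D-vine. By Proposition~\ref{prop:Dvine-chain}, after relabeling, $\mathcal{V}$ contains the two maximal chains displayed there. By the converse part of Corollary~\ref{orders in maxASPD <-> max chains of R-vine}, each maximal chain corresponds to a unique preference in $\mathcal{D}$. The first chain yields $\omega=c_1c_2\cdots c_n$, and the second yields $c_nc_{n-1}\cdots c_1=\omega^{\rev}$. Both therefore lie in $\mathcal{D}$, and Proposition~\ref{prop:BSPD} shows that $\mathcal{D}$ is a maximal BSPD.

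I expect no serious obstacle, since all the work sits in the cited results. The one point to check is that the correspondence in Corollary~\ref{orders in maxASPD <-> max chains of R-vine} reads a chain $C_1\subseteq\cdots\subseteq C_n$ off as the preference $\omega$ with $C_k=\{\omega(1),\dots,\omega(k)\}$. Under this reading, the chain growing from $c_n$ really gives the reversed order $c_nc_{n-1}\cdots c_1$. If the corollary were unavailable, a fallback would be an induction on $n$ using Proposition~\ref{prop:splitting-Dvine} and Corollary~\ref{cor:splitting-BSPD}, but the direct argument above suffices.
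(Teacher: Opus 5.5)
Your proposal is correct and follows the paper's proof exactly: it uses the chain–preference correspondence of Corollary~\ref{orders in maxASPD <-> max chains of R-vine} to translate the reversal-pair criterion of Proposition~\ref{prop:BSPD} into the two-chain criterion of Proposition~\ref{prop:Dvine-chain}. Your version also spells out the index bookkeeping showing that the second chain yields $\omega^{\rev}$, and handles the trivial case $n\le 1$, which the paper leaves implicit.
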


\begin{proof}
The correspondence in Theorem \ref{thm:vine-ASPD} maps maximal chains in $\mathcal{V}$ to preferences in $\mathcal{D}$
(see also Corollary \ref{orders in maxASPD <-> max chains of R-vine}).
The assertion follows from the characterizations of D-vines  and maximal BSPDs in Propositions \ref{prop:Dvine-chain} and \ref{prop:BSPD}. 
\end{proof}

%********************************************************************************************************

\subsection{First-rank distribution}

It was shown in \cite[Theorem 1]{Slinko19} that any maximal ASPD is \textbf{minimally rich}, meaning that every alternative appears as the first-ranked alternative in at least one preference in the domain. 
We refine this result by giving a formula for the number of times each alternative is first-ranked.

For a domain $\mathcal{D}$ on a set $A$, define the \textbf{first-rank distribution} $\operatorname{first}(\mathcal{D})$ to be the multiset 
\[
\operatorname{first}(\mathcal{D}) = \Set{t_a | a \in A}, 
\quad \text{where } 
t_a \coloneqq \lvert\Set{\omega \in \mathcal{D} | \omega(1) = a }\rvert.
\]
As a consequence of Corollary \ref{orders in maxASPD <-> max chains of R-vine}, we obtain the following.

\begin{proposition}
\label{prop:TRD}
Let $\mathcal{D}$ be a maximal ASPD on a finite set $A$.
Then for each $a \in A$, the first-rank number $t_a$ equals the number of maximal chains in $\mathcal{V}$ starting from the minimal element $\{a\}$. 
\end{proposition}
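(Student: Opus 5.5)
The plan is to read off Proposition~\ref{prop:TRD} directly from the bijection in Corollary~\ref{orders in maxASPD <-> max chains of R-vine}. Fix $a \in A$ and consider the subset $\mathcal{D}^{a} \coloneqq \Set{\omega \in \mathcal{D} | \omega(1) = a}$, so that $t_a = |\mathcal{D}^{a}|$. Let $\mathcal{M}^{a}$ be the set of maximal chains of $\mathcal{V}$ whose minimal element is $\{a\}$. By Definition~\ref{def:regular vine; poset}(1)--(2), every maximal chain of $\mathcal{V}$ has the form $C_1 \subsetneq C_2 \subsetneq \cdots \subsetneq C_n = A$ with $|C_k| = k$, and it starts at a singleton $C_1$.

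Next, use the map from Corollary~\ref{orders in maxASPD <-> max chains of R-vine}, which sends $\omega \in \mathcal{D}$ to the chain $C^{\omega}$ with $C^{\omega}_k = \{\omega(1), \dots, \omega(k)\}$. This map is a bijection between $\mathcal{D}$ and the set of all maximal chains of $\mathcal{V}$. Injectivity is clear, because $\omega(k)$ is recovered as the unique element of $C^{\omega}_k \setminus C^{\omega}_{k-1}$. Surjectivity is the converse part of the corollary. The minimal element of $C^{\omega}$ is $\{\omega(1)\}$, so $\omega(1) = a$ holds exactly when $C^{\omega} \in \mathcal{M}^{a}$. The bijection therefore restricts to a bijection $\mathcal{D}^{a} \to \mathcal{M}^{a}$, and so $t_a = |\mathcal{M}^{a}|$.

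There is essentially no obstacle. The only point needing care is that the vine $\mathcal{V}$ in the statement is the one given by $\eta$ in Theorem~\ref{thm:vine-ASPD}. Since $\xi$ and $\eta$ are mutually inverse, we have $\mathcal{D} = \xi_A(\mathcal{V})$, and this is exactly what Corollary~\ref{orders in maxASPD <-> max chains of R-vine} uses. I would state this identification explicitly before applying the corollary.
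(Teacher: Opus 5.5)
Your proposal is correct and follows the paper's route. The paper presents the proposition as an immediate consequence of the bijection between $\mathcal{D}$ and the maximal chains of $\mathcal{V}$ (the corollary to Theorem~\ref{thm:vine-ASPD}), and you make explicit that this bijection restricts to preferences with $\omega(1)=a$ and chains with minimal element $\{a\}$, using $\mathcal{D}=\xi_A(\mathcal{V})$ to justify applying it.
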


The number of maximal chains in a regular vine can be computed via a rule analogous to Pascal's triangle. 
In particular, the first-rank distributions corresponding to D-vines and C-vines are given by binomial coefficients and powers of $2$, respectively, which can be verified by induction on the number of alternatives. 

\begin{example}
The first-rank distributions of the domains $\mathcal{D}_{4,1}$ and $\mathcal{D}_{4,2}$ in Figure~\ref{fig:ASPD-rk4} can be computed by counting maximal chains in the corresponding regular vines $\mathcal{V}_{4,1}$ and $\mathcal{V}_{4,2}$ in Figure~\ref{fig:TRD}. 
\end{example}

 \begin{figure}[htbp!]
 \centering
$\mathcal{V}_{4,1} =$ \begin{tikzpicture}[baseline=20]
\draw (0,0) node[c](11){$a$} node[yshift=-15]{1};
\draw (1,0) node[c](12){$b$} node[yshift=-15]{3};
\draw (2,0) node[c](13){$c$} node[yshift=-15]{3};
\draw (3,0) node[c](14){$d$} node[yshift=-15]{1};
\draw (0.5, 0.865) node[v](21){} node[yshift=10]{1};
\draw (1.5, 0.865) node[v](22){} node[yshift=10]{2};
\draw (2.5, 0.865) node[v](23){} node[yshift=10]{1};
\draw (1,1.73) node[v](31){} node[yshift=10]{1};
\draw (2,1.73) node[v](32){} node[yshift=10]{1};
\draw (1.5,2.595) node[v](41){} node[yshift=10]{1};
\draw (41) -- (31) -- (21) -- (11);
\draw (41) -- (32) -- (23) -- (14);
\draw (31) -- (22) -- (32);
\draw (21) -- (12) -- (22) -- (13) -- (23);
\end{tikzpicture}
\qquad
$\mathcal{V}_{4,2} =$ \begin{tikzpicture}[baseline=20]
\draw (0,0) node[c](11){$a$} node[yshift=-15]{4};
\draw (1,0) node[c](12){$b$} node[yshift=-15]{2};
\draw (2,0) node[c](13){$c$} node[yshift=-15]{1};
\draw (3,0) node[c](14){$d$} node[yshift=-15]{1};
\draw (0.5, 0.865) node[v](21){} node[yshift=10]{2};
\draw (1.5, 0.865) node[v](22){} node[yshift=10]{1};
\draw (2.5, 0.865) node[v](23){} node[yshift=10]{1};
\draw (1,1.73) node[v](31){} node[yshift=10]{1};
\draw (2,1.73) node[v](32){} node[yshift=10]{1};
\draw (1.5,2.595) node[v](41){} node[yshift=10]{1};
\draw (41) -- (31) -- (21) -- (11);
\draw (41) -- (32) -- (23) -- (14);
\draw (31) -- (22) -- (13);
\draw (21) -- (12);
\draw (32) -- (21);
\draw (22) -- (11);
\draw (23) -- (11);
\end{tikzpicture}

\caption{The regular vines corresponding to the domains $\mathcal{D}_{4,1}$ and $\mathcal{D}_{4,2}$ in Figure~\ref{fig:ASPD-rk4}. 
The first-rank distributions are 
$\operatorname{first}(\mathcal{D}_{4,1}) = \{1,3,3,1\}$ and $\operatorname{first}(\mathcal{D}_{4,2}) = \{4,2,1,1\}$.}
\label{fig:TRD}
\end{figure}

%********************************************************************************************************

\subsection{Richness}

In this subsection, we discuss another application of Theorem~\ref{thm:vine-ASPD}, concerning a generalization of minimal richness. 

A domain $\mathcal{D}$ on a set $A$ is \textbf{$k$-rich} if for every $j \le k$ and  $a \in A$, there exists $\omega \in \mathcal{D}$ such that $\omega(j) = a$. 
We say that $\mathcal{D}$ has \textbf{richness} $k$ if it is $k$-rich but not $(k+1)$-rich, and denote the richness by $\rich(\D)$. 

\begin{theorem}[{\cite[Lemma 1 and Theorem 1]{MRZ24}}]
\label{MRZ possible richness}
If $\mathcal{D}$ is a maximal ASPD on $n$ alternatives, then
\[
2 \le \rich(\D) \le \left\lfloor\frac{n}{2} \right\rfloor + 1. 
\]
\end{theorem}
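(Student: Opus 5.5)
Since this is a cited result, I would reprove it with the vine model, using Corollary~\ref{orders in maxASPD <-> max chains of R-vine}: $\omega(j)=a$ for some $\omega\in\mathcal{D}$ exactly when some maximal chain of $\mathcal{V}$ has $C_j\setminus C_{j-1}=\{a\}$. So $\mathcal{D}$ is $j$-rich if and only if every $a\in A$ is ``added at step $j$'' along some maximal chain. Equivalently, for each $a$ there must be $S\in\mathcal{V}(j)$ with $a\in S$ such that $S$ covers $S\setminus\{a\}$ in $\mathcal{V}$. The cover condition says $a$ is one of the two ``conditioned'' elements $\{x\}$, $\{y\}$ with $S=\{x\}\vee\{y\}$ (Proposition~\ref{prop:atom}). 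Indeed, $S$ covers exactly two elements $S\setminus\{x\}$ and $S\setminus\{y\}$, and every chain through $S$ can be continued downward.

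For the lower bound $\rich\ge 2$, $1$-richness is the minimal richness of \cite{Slinko19}. It also follows here because every singleton is minimal and lies in some rank-$2$ element. For $2$-richness I use that $T_1$ is a tree on $A$ with $n\ge 2$. Every vertex $a$ lies on an edge $\{a,b\}\in\mathcal{V}(2)$, and $\{a,b\}$ covers $\{b\}$, so $a$ is added at step $2$ along $\{b\}\subseteq\{a,b\}$. This needs no induction.

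For the upper bound, I count conditioned elements at rank $j$. Each $S\in\mathcal{V}(j)$ contributes exactly two elements that can be added at step $j$, and $|\mathcal{V}(j)|=n+1-j$ by Proposition~\ref{prop:card-R-vine}. So at most $2(n+1-j)$ alternatives are added at step $j$, and $j$-richness forces $2(n+1-j)\ge n$, that is, $j\le (n+2)/2$. This gives only $\rich\le\lfloor n/2\rfloor+1$ when $n$ is even. For $n$ odd the bound $\lfloor (n+2)/2\rfloor=\lfloor n/2\rfloor+1$ also holds, since $(n+2)/2$ is then a half-integer. So the crude count already yields $\rich(\mathcal{D})\le\lfloor n/2\rfloor+1$ in both parities.

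The main obstacle is making sure the counting identification is airtight. I need the following equivalence: $a$ occurs in position $j$ of some preference if and only if there is $S\in\mathcal{V}(j)$ containing $a$ with $S\setminus\{a\}\in\mathcal{V}$. The backward direction requires extending $S\setminus\{a\}\subset S$ to a full maximal chain. Upward extension is automatic because the top element $A$ exists and the poset is graded. Downward extension holds because every non-minimal element covers two elements. I also need that the elements covered by $S$ are exactly $S$ minus one of its two join-generating atoms. This follows from Proposition~\ref{prop:atom} together with condition (3) of Definition~\ref{def:regular vine; poset}, and I would verify it by induction via splitting (Lemma~\ref{lem:splitting-vine}), since $A$ itself covers $A\setminus\{a_1\}$ and $A\setminus\{a_2\}$ where $A=\{a_1\}\vee\{a_2\}$.
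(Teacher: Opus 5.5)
Your argument is correct. It differs from the paper only in that the paper does not prove this statement: it imports it from \cite{MRZ24} and then combines it with Theorem~\ref{richness in terms of R-vine}.

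You instead derive both bounds inside the paper's own vine model. By Theorem~\ref{thm:vine-ASPD} and the corollary after it, an alternative $a$ occupies position $j\ge 2$ in some preference of $\mathcal{D}$ exactly when some $S\in\mathcal{V}(j)$ contains $a$ and has $S\setminus\{a\}\in\mathcal{V}$. Each such $S$ covers exactly two elements, so at most $2|\mathcal{V}(j)|=2(n+1-j)$ alternatives can ever occupy position $j$ (Proposition~\ref{prop:card-R-vine}). Then $j$-richness forces $n\le 2(n+1-j)$, i.e.\ $j\le\lfloor (n+2)/2\rfloor=\lfloor n/2\rfloor+1$ in either parity. The lower bound, from the first associated tree having no isolated vertex when $n\ge 2$, is also sound. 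There is no circularity, since the proof of Theorem~\ref{thm:vine-ASPD} never uses \cite{MRZ24}.

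Your route gives a self-contained explanation of where $\lfloor n/2\rfloor+1$ comes from: a double count of lower covers against the rank sizes $n+1-j$. It also yields the sharper by-product that at most $2(n+1-j)$ distinct alternatives appear in position $j$. The citation buys the paper brevity and keeps this bound independent of the vine machinery.

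One simplification: the step you flag as the main obstacle is not needed. The count uses only condition~(3) of Definition~\ref{def:regular vine; poset} and the fact that rank equals cardinality. Together these say each lower cover of $S$ is $S\setminus\{x\}$ for exactly two $x\in S$; which two is irrelevant. If you want the identification with the join pair anyway, it follows at once from Proposition~\ref{prop:atom}, with no induction via splitting. If $S=\{u\}\vee\{v\}$, no element strictly below $S$ contains both $u$ and $v$, so any $x$ with $S\setminus\{x\}\in\mathcal{V}$ lies in $\{u,v\}$.
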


The notion of richness has several interpretations in social choice theory; see \cite[\S4]{MRZ24} for details. 
Maximal ASPDs with minimal richness were classified in \cite[Theorem 2]{MRZ24}. 
It was posed in \cite[Problem 1]{MRZ24} to find a structural characterization of maximal richness. 
We provide such a characterization for all possible richness values in Theorem~\ref{richness in terms of R-vine}, thereby resolving the problem for maximal richness. 

\begin{lemma}\label{higher ranking}
Let $\mathcal{D}$ be a maximal ASPD on an $n$-element set $A$. 
Let $a \in A$ and $\omega \in \mathcal{D}$ with $\omega(k) = a$. 
Then for every $j \le k$, there exists $\omega' \in \mathcal{D}$ such that $\omega'(j) = a$. 
Consequently,
\[
\rich(\D) = \max\Set{k \in [n] | \text{ for every $a \in A$, there exists $\omega \in \mathcal{D}$ with $\omega(k) = a$}}.
\]
\end{lemma}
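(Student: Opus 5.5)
We prove the first assertion by induction on $n=|A|$, using the splitting of Lemma~\ref{Slinko}; the formula for $\rich(\D)$ then follows formally. Indeed, if every $a$ occurs at position $k$ in some preference, then by the first assertion every $a$ also occurs at every position $j\le k$. Hence the set of $k$ for which every alternative appears at position $k$ is an initial segment of $[n]$, and its maximum is exactly the largest $k$ for which $\D$ is $k$-rich.

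For the induction, the cases $n\le 2$ are trivial, and for $n=3$ one checks $\D_3=\{abc,bac,bca,cba\}$ directly, which suffices up to isomorphism by Example~\ref{ex:rk4}. Now let $n\ge 3$, and write $\D$ as in Lemma~\ref{Slinko}, with bottom alternatives $a_1,a_2$ and halves $\D_1,\D_2$ on $A_1,A_2$. Every preference of $\D$ restricted to its first $n-1$ positions is a preference of $\D_1$ or $\D_2$, and conversely each preference of $\D_i$ extends to a preference of $\D$ by appending $a_i$.

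Fix $a$ and $\omega\in\D$ with $\omega(k)=a$. If $k\le n-1$, then $\omega$ ends in some $a_i$, and $\omega_{A_i}\in\D_i$ has $a$ at position $k$. By the induction hypothesis applied to the maximal ASPD $\D_i$, for each $j\le k$ there is $\tau\in\D_i$ with $\tau(j)=a$. Appending $a_i$ to $\tau$ gives the required $\omega'\in\D$.

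The remaining case is $k=n$, so $a=a_i$ is a bottom alternative, say $a=a_1$. We must place $a_1$ at every position $j\le n$. By Lemma~\ref{Slinko}, $a_1$ is a bottom alternative of $\D_2$, so some preference of $\D_2$ has $a_1$ at position $n-1$. Appending $a_2$ yields a preference of $\D$ with $a_1$ at position $n-1$, which reduces this case to the previous one with $k=n-1$. The case $j=n$ is $\omega$ itself.

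The only delicate point is this reduction in the bottom case: we use that $a_{3-i}$ is a bottom alternative of $\D_i$, i.e., the diagram decomposition $\D_i=(\D'\text{ over }a_{3-i})\cup(\cdots)$ from Lemma~\ref{Slinko}. Everything else is routine bookkeeping of positions under restriction to $A_i$.
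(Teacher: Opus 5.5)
Your proof is correct and follows essentially the same route as the paper: induction on $|A|$ via the splitting of Lemma~\ref{Slinko}, applying the induction hypothesis to $\mathcal{D}_i$ when $a$ is not last in $\omega$, and otherwise using that the bottom alternative $a_1$ of $\mathcal{D}$ is a bottom alternative of $\mathcal{D}_2$. You also spell out why the richness formula follows (the set of positions at which every alternative occurs is an initial segment of $[n]$), which the paper leaves implicit.
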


\begin{proof}
We argue by induction on $|A|$. 
The case $|A| \le 2$ is immediate. 
Suppose $|A| \ge 3$. 
Let $\mathcal{D}_1$ and $\mathcal{D}_2$ be the maximal ASPDs obtained from $\mathcal{D}$ by splitting (see Lemma~\ref{Slinko}). 
If $a$ is a bottom alternative in $\mathcal{D}$, then $a$ is a bottom alternative of $\mathcal{D}_{1}$ or $\mathcal{D}_{2}$. 
Therefore, by the induction hypothesis, the claim holds. 
Otherwise, apply the induction hypothesis to the domains $\mathcal{D}_1$ and $\mathcal{D}_2$. 
\end{proof}

\begin{theorem}\label{richness in terms of R-vine}
Let $\mathcal{D}$ be a maximal ASPD on $n$-element set $A$ and $\mathcal{V}$ the corresponding regular vine under the correspondence in Theorem~\ref{thm:vine-ASPD}. 
Then 
\begin{align*} 
\rich(\D) = \min\Set{k \in [n] | \bigcap_{\substack{S \in \mathcal{V} \\ |S| = k}}S \neq \varnothing}. \end{align*}
\end{theorem}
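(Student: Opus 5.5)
The plan is to translate $k$-richness into a statement about the rank-$(k-1)$ elements of $\mathcal{V}$, using the bijection between preferences in $\mathcal{D}$ and maximal chains of $\mathcal{V}$ (Corollary~\ref{orders in maxASPD <-> max chains of R-vine}), together with Lemma~\ref{higher ranking}. Write $\mathcal{V}(j)$ for the set of elements of rank $j$, and adopt the convention that the empty intersection over $\mathcal{V}(0)=\varnothing$ is $\varnothing$, so the case $k=1$ is harmless. The minimum in the statement exists, since $\mathcal{V}(n)=\{A\}$ and hence $\bigcap \mathcal{V}(n)=A\neq\varnothing$.

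\textbf{Step 1.} Every element $S\in\mathcal{V}(j)$ lies on some maximal chain of $\mathcal{V}$. Indeed, $\mathcal{V}$ is graded with top element $A$ and all minimal elements of rank $1$. Hence a chain through $S$ can be extended downward to a singleton and upward to $A$, and by construction it passes through every rank.

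\textbf{Step 2: the key equivalence.} For $a\in A$ and $k\ge 2$ we claim:
\[
\text{there is } \omega\in\mathcal{D} \text{ with } \omega(k)=a
\iff
\text{there is } S\in\mathcal{V}(k-1) \text{ with } a\notin S .
\]
For the forward direction, the chain attached to $\omega$ contains $S=\{\omega(1),\dots,\omega(k-1)\}$, which does not contain $a$.

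For the converse, take a maximal chain through $S$ (Step 1). The corresponding preference $\omega$ has $\{\omega(1),\dots,\omega(k-1)\}=S$, so $a=\omega(k')$ for some $k'\ge k$. Lemma~\ref{higher ranking} then produces $\omega'\in\mathcal{D}$ with $\omega'(k)=a$.

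Consequently, for $k\ge 2$, every alternative occupies position $k$ in some preference exactly when $\bigcap\mathcal{V}(k-1)=\varnothing$. For $k=1$ this holds trivially, since $\mathcal{D}$ is minimally rich.

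\textbf{Step 3: conclusion.} Let $m$ denote the minimum in the statement. By definition, $\mathcal{D}$ is $k$-rich iff $\bigcap\mathcal{V}(j-1)=\varnothing$ for all $j\le k$.
\begin{itemize}
\item For every $j\le m$ we have $j-1<m$, so $\bigcap\mathcal{V}(j-1)=\varnothing$ by minimality of $m$. Hence $\mathcal{D}$ is $m$-rich.
\item For $j=m+1$ we have $\bigcap\mathcal{V}(m)\neq\varnothing$. Hence some alternative never occupies position $m+1$, and $\mathcal{D}$ is not $(m+1)$-rich.
\end{itemize}
Therefore $\rich(\mathcal{D})=m$.

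The only delicate point is the converse direction in Step 2. It needs both Step 1 (so that a rank-$(k-1)$ set missing $a$ is actually realized as an initial segment of some preference) and Lemma~\ref{higher ranking} (to move $a$ from a later position up to exactly position $k$). Everything else is bookkeeping.
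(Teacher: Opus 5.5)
Your proof is correct and uses the same two ingredients as the paper. The first is that the elements of $\mathcal{V}$ are exactly the initial segments $\{\omega(1),\dots,\omega(k)\}$ of preferences in $\mathcal{D}$; this is immediate from the definition of $\eta_A$ in Theorem~\ref{thm:vine-ASPD}, which makes your Step~1 unnecessary. The second is Lemma~\ref{higher ranking}. The only difference is packaging: you isolate the per-position equivalence ``every alternative occupies position $k$ iff $\bigcap\mathcal{V}(k-1)=\varnothing$,'' whereas the paper proves $\rich(\D)\le r'$ and $\rich(\D)\ge r'$ separately, which are the two directions of your equivalence at $k=r'+1$ and $k=\rich(\D)+1$.
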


\begin{proof}
Let $r \coloneqq \rich(\D)$ and let $r'$ denote the right-hand side. 

First, suppose that $r > r'$. 
Let $a \in \bigcap_{S \in \mathcal{V}, \,|S| = r'} S$. 
By Lemma~\ref{higher ranking}, there exists $\omega \in \mathcal{D}$ such that $\omega(r'+1) = a$. 
Let 
\[
T \coloneqq \{\omega(1), \dots, \omega(r')\}.
\]
Then $T \in \mathcal{V}$ by Theorem~\ref{thm:vine-ASPD}. 
Since $|T| = r'$, we must have $a \in \bigcap_{S \in \mathcal{V}, \,|S| = r'} S \subseteq T$, which contradicts $a = \omega(r'+1) \notin T$. 
Thus, $r \le r'$. 

Next, we show that 
\[
\bigcap_{\substack{S \in \mathcal{V} \\ |S| = r}} S \neq \varnothing.
\]
By the maximality of $r$ (see Lemma~\ref{higher ranking}), there exists $a' \in A$ such that $\omega(r+1) \neq a'$ for all $\omega \in \mathcal{D}$. 
Applying Lemma~\ref{higher ranking} again, we obtain that $\omega(k) \neq a'$ for all $\omega \in \mathcal{D}$ and $k \ge r+1$.

Let $S \in \mathcal{V}$ with $|S| = r$. 
Then there exists $\omega \in \mathcal{D}$ such that 
\[
S = \{\omega(1), \dots, \omega(r)\}
\]
by Theorem~\ref{thm:vine-ASPD}. 
Hence $a' \in S$, and therefore $a'$ belongs to every such $S$, proving the claim.

By the minimality of $r'$, this implies $r \ge r'$. 
Consequently, $r = r'$, completing the proof.
\end{proof}

The following is an immediate consequence of Theorems~\ref{MRZ possible richness} and \ref{richness in terms of R-vine}. 

\begin{corollary}
\label{cor:min-max-rich}
Let $\mathcal{D}$ be a maximal ASPD on $n$-element set $A$. Then 
\begin{enumerate}
\item $\rich(\D) = 2$ if and only if the first associated tree of $\mathcal{V}$ is a star graph; 
\item $\rich(\D) = \lfloor n/2 \rfloor + 1$ if and only if 
\[
\bigcap_{\substack{S \in \mathcal{V} \\ |S| = \lfloor n/2 \rfloor}} S = \varnothing. 
\]
\end{enumerate}
\end{corollary}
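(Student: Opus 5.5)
Both parts follow by substituting specific values of $k$ into Theorem~\ref{richness in terms of R-vine}, which gives $\rich(\D)=\min\{k : \bigcap_{S\in\mathcal{V},|S|=k}S\neq\varnothing\}$, and combining this with the bounds of Theorem~\ref{MRZ possible richness}.

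For part (1), I will first note that $k=1$ never achieves the minimum when $n\ge 2$. The rank-$1$ elements are the singletons $\{a\}$, $a\in A$, whose intersection is empty. This matches the lower bound $\rich(\D)\ge 2$. Hence $\rich(\D)=2$ if and only if the rank-$2$ elements of $\mathcal{V}$ have a common element. These rank-$2$ elements are exactly the edges of the first associated tree $T_1$ on $A$, so the condition says that all edges of $T_1$ share a common vertex. Since $T_1$ is a spanning tree on $A$, this is equivalent to $T_1$ being a star. For $n=2$ the single edge forms a trivial star, and for $n\ge 3$ edges pairwise sharing a common vertex in a tree means a star. I will check this small equivalence briefly; it is the only real content of part (1).

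For part (2), set $m=\lfloor n/2\rfloor$. The key monotonicity fact is that if $\bigcap_{|S|=k}S\neq\varnothing$, then $\bigcap_{|S|=k+1}S\neq\varnothing$. To prove it, take $a$ in every rank-$k$ element. Every rank-$(k+1)$ element $S'$ covers two rank-$k$ elements, which are subsets of $S'$, so $a\in S'$. Consequently the set of $k$ with nonempty intersection is an up-set in $[n]$ (it contains $n$, since $\mathcal{V}(n)=\{A\}$), and $\rich(\D)$ is its minimum.

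With this in hand, $\rich(\D)=m+1$ holds exactly when $m$ is not in that up-set and $m+1$ is. By Theorem~\ref{MRZ possible richness}, $\rich(\D)\le m+1$ always, so $m+1$ always lies in the up-set. Therefore $\rich(\D)=m+1$ if and only if $\bigcap_{|S|=m}S=\varnothing$. The degenerate case $n\le 1$ is excluded or trivial.

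I expect no serious obstacle. The only points needing care are the monotonicity lemma and the tree fact that a tree whose edges all share one vertex is a star, both of which are short.
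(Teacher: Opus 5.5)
Your proposal is correct and follows the paper's route: the paper states the corollary as an immediate consequence of Theorem~\ref{MRZ possible richness} and Theorem~\ref{richness in terms of R-vine}, and that is exactly how you derive it. Your monotonicity observation, that a nonempty intersection at rank $k$ forces one at rank $k+1$ because each element covers subsets, is the step the paper leaves implicit in part (2), and you are right to make it explicit.
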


By definition, all associated trees of a C-vine are star graphs; hence C-vines have minimal richness $2$. 
On the other hand, by the description of D-vines in Remark~\ref{rem:Dvine-typeA}, one verifies that the Condition ~\ref{cor:min-max-rich}(2) holds, so D-vines attain the maximal richness $\lfloor n/2 \rfloor + 1$. 
This further illustrates the strong correspondence between vine structures and single-peaked domains, with C-vines and D-vines representing the two extreme cases. 
%********************************************************************************************************
\section{Extremal lattices}
\label{sec:EL}

\subsection{Extremal lattices and regular vines}
\label{subsec:EL-vine}

In this subsection, we discuss a class of lattices arising in \emph{formal concept analysis} (FCA). 
We prove directly that these lattices are essentially the same as regular vines.

A \textbf{lattice} is a poset in which every pair of elements has a join and a meet. An element in a poset is called \textbf{join-irreducible} if $a = b \vee c$ implies $a=b$ or $a=c$ for any non-minimal element $a$.  
An \textbf{atom} in a lattice is an element that covers the minimal element. 
Every atom is join-irreducible. 
Let $B(k)=(2^{[k]}, \subseteq)$ denote the Boolean lattice on $[k]=\{1,\ldots,k\}$.
We say that a lattice $\mathcal{P}$ is \textbf{$B(k)$-free} if it does not contain an induced subposet isomorphic to $B(k)$. 

\begin{definition}
For integers $1 \le k \le n$, a lattice $\mathcal{P}$ is called an \textbf{$(n,k)$-extremal lattice} if the following conditions are satisfied:
\begin{enumerate}[(1)]
\item $\mathcal{P}$ has at most $n$ join-irreducible elements;
\item $\mathcal{P}$ is $B(k)$-free;
\item $\mathcal{P}$ has exactly 
$
\sum_{i=0}^{k-1} \binom{n}{i}
$
elements.
\end{enumerate}
\end{definition}

By \cite[Corollary~10]{AC17}, any lattice satisfying conditions (1) and (2) has at most $\sum_{i=0}^{k-1} \binom{n}{i}$ elements. Thus, condition (3) means that $\mathcal{P}$ attains the maximum possible size under these constraints, which justifies the term \emph{extremal}.

Extremal lattices arise naturally in FCA, which studies the structure of data through object--attribute relationships. We briefly recall the necessary definitions.

A \textbf{formal context} is a triple $\mathfrak{C} = (G, M, I)$ consisting of a set of objects $G$, a set of attributes $M$, and an incidence relation $I \subseteq G \times M$, where $(g,m) \in I$ indicates that the object $g$ has the attribute $m$.
For a subset $A \subseteq G$, define
\[
A^I \coloneqq \Set{ m \in M | (g,m) \in I \text{ for all } g \in A },
\]
and for $B \subseteq M$, define
\[
B^I \coloneqq \Set{ g \in G | (g,m) \in I \text{ for all } m \in B }.
\]

A pair $(A,B)$ is called a \textbf{formal concept} if $A^I = B$ and $B^I = A$. 
Here, $A$ and $B$ are called the \emph{extent} and \emph{intent} of the concept, respectively.
The set of all formal concepts is partially ordered by
\[
(A_1,B_1) \le (A_2,B_2) \iff A_1 \subseteq A_2 
\quad (\text{equivalently, } B_1 \supseteq B_2),
\]
and this poset forms a lattice $\mathcal{P}(\mathfrak{C})$, called the \textbf{concept lattice}.

An \textbf{implication} is an expression $X \Rightarrow Y$ for $X,Y \subseteq M$, 
which is said to hold if $X^I \subseteq Y^I$, that is, every object having all attributes in $X$ also has all attributes in $Y$. 
It is \emph{trivial} if $Y \subseteq X$, and \emph{nontrivial} otherwise.

An important example is the \textbf{contranominal scale}. For $k \in \mathbb{Z}_{>0}$, define
\[
\mathfrak{C}_k = ([k], [k], \neq),
\]
where $(i,j) \in I$ if and only if $i \ne j$. Thus each object has all attributes except one.

The concept lattice $\mathcal{P}(\mathfrak{C}_k)$ is isomorphic to the Boolean lattice $B(k)$. 
Equivalently, the context admits no nontrivial implications. Intuitively, the attributes are maximally independent: no attribute can be inferred from any combination of others.

\begin{example}
\label{ex:CS}
Consider the context $\mathfrak{C} = (G,M,I)$ with
\[
G = \{1,2,3\}, \quad M = \{a,b,c\},
\]
and incidence table
\[
\begin{array}{c|c|c|c}
 & a & b & c \\ \hline
1 & &  \checkmark &  \checkmark  \\ \hline
2 & \checkmark  &  & \checkmark \\ \hline
3 & \checkmark  & \checkmark &
\end{array}
\]
This is the contranominal scale on three elements. Its concept lattice is the Boolean lattice $B(3)$.
\end{example}

This example is maximal in the sense that all subsets of $M$ occur as intents. 
Any modification yields dependencies. For instance, adding $(3,c)$ yields the nontrivial implication $\{a\} \Rightarrow \{c\}$, which collapses part of the Boolean structure and reduces the number of concepts.

Thus, forbidding large Boolean sublattices can be viewed as restricting the independence of attributes. Extremal lattices correspond to contexts whose concept lattices are as large as possible under such constraints.

For $k=1$ and $k=2$, the structure is simple: every $(n,1)$-extremal lattice is a singleton, and every $(n,2)$-extremal lattice is a chain of length $n$. 
The first nontrivial case is therefore $k=3$, which we now study.
From the general theory developed in \cite{AC17}, we extract several results that will be used in this paper.

\begin{lemma}[{\cite[Corollary 16, Lemma 18]{AC17}}]
Let $\mathcal{P}$ be an $(n,3)$-extremal lattice. Then the following properties hold:
\begin{enumerate}[(1)]
\item $\mathcal{P}$ has exactly $n$ join-irreducible elements, and these are precisely the atoms of $\mathcal{P}$;
\item all maximal chains have length $n$, so $\mathcal{P}$ is graded. 
\end{enumerate}
\end{lemma}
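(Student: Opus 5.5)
The plan is to reduce everything to the counting bound behind \cite[Corollary~10]{AC17}, and to read the structural conclusions off the cases in which that bound is attained. Let $\mathcal{P}$ be an $(n,3)$-extremal lattice, let $J$ be its set of join-irreducible elements, and put $n' \coloneqq |J| \le n$. For $x \in \mathcal{P}$ write $J(x) \coloneqq \Set{j \in J | j \le x}$. In a finite lattice every element is the join of $J(x)$, so $x \mapsto J(x)$ is an injective order embedding of $\mathcal{P}$ into $2^{J}$.

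The first step is to prove the bound in a form whose equality cases are visible. I would show that in a $B(3)$-free lattice every element is the join of at most two join-irreducibles. Suppose instead that $x$ has an irredundant join-representation by three elements $j_1,j_2,j_3 \in J$. Then the eight joins $\bigvee_{i\in S} j_i$, for $S\subseteq\{1,2,3\}$, together with $\hat 0$ for $S=\varnothing$, should form an induced copy of $B(3)$. To make this work I would pick the representation minimal with respect to $J(\cdot)$, so that all eight joins are distinct and no unexpected comparabilities occur. This gives a surjection
\[
\{\hat 0\} \sqcup J \sqcup \tbinom{J}{2} \longrightarrow \mathcal{P}, \qquad \{j,j'\} \longmapsto j \vee j',
\]
and hence $|\mathcal{P}| \le 1 + n' + \binom{n'}{2}$.

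By condition~(3) we have $|\mathcal{P}| = 1+n+\binom{n}{2}$. Comparing with the bound forces $n'=n$, which is the first half of~(1), and it also forces the surjection above to be a bijection. Bijectivity has two consequences. First, no two elements of $J$ are comparable: if $j<j'$, then $j\vee j' = j'$ would be counted twice. Second, every $j\vee j'$ with $j\ne j'$ lies outside $J\cup\{\hat 0\}$. So $J$ is an antichain. Now take $j\in J$ and $\hat 0 < y < j$. Then $J(y) \subseteq J(j)\setminus\{j\}$, and this set is empty because $J$ is an antichain, which contradicts $y\neq\hat 0$. Hence every element of $J$ is an atom. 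Since every atom is join-irreducible, (1) follows.

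For (2), every element now corresponds to a set of atoms $J(x)$, with $|J(x)|\in\{0,1,2\}$ at the bottom two levels. I would show that whenever $x \lessdot y$ is a cover, $|J(y)\setminus J(x)| = 1$. Then $|J(\cdot)|$ is a rank function, and every maximal chain from $\hat 0$ to $\hat 1$ (where $J(\hat 1)=J$) has length exactly $n$.

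This covering step is where I expect the main difficulty. The approach I would try first is by contradiction. Suppose $x\lessdot y$ and $c,d \in J(y)\setminus J(x)$ are distinct atoms. The elements $x\vee c$ and $x\vee d$ both lie strictly above $x$ and are at most $y$, so by the cover relation both equal $y$. Write $x = a\vee b$ with $a,b$ atoms, which is possible by the bijection above; the degenerate cases $x=\hat 0$ or $x$ an atom would be handled separately. Then the elements $\hat 0$, $a$, $c$, $d$, $a\vee c$, $a\vee d$, $c\vee d$, and $y$ should give an induced $B(3)$. Establishing this uses the injectivity of $\{j,j'\}\mapsto j\vee j'$ to separate $a\vee c$, $a\vee d$ and $c\vee d$, and to rule out extra comparabilities among them. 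If some degenerate case resists this argument, the fallback is extremality: show that a cover which skips an atom would allow a strictly larger $B(3)$-free lattice on the same atoms, contradicting the tightness of the count.
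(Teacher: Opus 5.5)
Your proof of (1) is correct and self-contained; the paper gives no proof here and only cites Albano--Chornomaz. An irredundant triple of join-irreducibles spans an induced $B(3)$, so the map $\{\hat 0\}\sqcup J\sqcup\binom{J}{2}\to\mathcal{P}$ is onto. The size condition then makes it a bijection, and your antichain argument shows that $J$ is exactly the set of atoms.

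The gap is the covering step in (2). You only assert that $\hat 0, a, c, d, a\vee c, a\vee d, c\vee d, y$ form an induced $B(3)$, and injectivity of $\{j,j'\}\mapsto j\vee j'$ cannot establish this. Injectivity makes $a\vee c$, $a\vee d$, $c\vee d$ pairwise distinct. An induced $B(3)$ also needs $a\vee c\neq y\neq a\vee d$ and irredundancy of $\{a,c,d\}$ (e.g.\ $a\not\le c\vee d$), and nothing you have proves these. Worse, the relations that $B(3)$-freeness actually gives (below) force $y\in\{a\vee c,\,a\vee d\}$, so your eight elements collapse and this route cannot be completed as described. The ``strictly larger lattice'' fallback is not an argument.

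The missing idea is to apply your no-irredundant-triple lemma to $\{a,b,c\}$ and $\{a,b,d\}$ instead of $\{a,c,d\}$.
\begin{itemize}
\item Since $c\not\le a\vee b=x$, either $a\le b\vee c$ or $b\le a\vee c$. As $y=x\vee c=a\vee b\vee c$, this gives $y\in\{a\vee c,\,b\vee c\}$.
\item Likewise $y\in\{a\vee d,\,b\vee d\}$.
\item The atoms $a,b,c,d$ are pairwise distinct, because $a,b\le x$ while $c,d\not\le x$. So the pairs $\{a,c\},\{b,c\},\{a,d\},\{b,d\}$ are distinct, and by injectivity so are their joins. This contradicts the two memberships above.
\end{itemize}
In particular, if $y\neq a\vee c$ then $y=b\vee c\neq b\vee d$, so $y=a\vee d$; this is the collapse of your candidate.

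The remaining cases are immediate. If $x=\hat 0$, then $y$ is an atom. If $x=a$ is an atom, then $a\vee c=y=a\vee d$ forces $c=d$ by injectivity.

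Finally, $x<y$ implies $J(x)\subsetneq J(y)$, since $x=\bigvee J(x)$. Hence $|J(\cdot)|$ increases by exactly one along every cover, from $0$ at $\hat 0$ to $n$ at $\hat 1$, which proves (2).
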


Let $\mathcal{P}$ be an $(n,3)$-extremal lattice and let $\mathcal{C}$ be a maximal chain of $\mathcal{P}$. 
Let $\dot{\mathcal{C}} \coloneqq \Set{\dot{x} \mid x \in \mathcal{C}}$ be a disjoint copy of $\mathcal{C}$. 
The \textbf{doubling} $\mathcal{P}[\mathcal{C}]$ of $\mathcal{C}$ in $\mathcal{P}$ is the poset on
\[
\mathcal{P} \sqcup \dot{\mathcal{C}}
\]
whose order relation is defined by
\begin{align*}
x \le y 
&\text{ whenever } x \le y \text{ in } \mathcal{P}
\quad (x,y \in \mathcal{P}),\\
x \le \dot{y} 
&\text{ whenever } x \le y \text{ in } \mathcal{P}
\quad(x \in \mathcal{P},\, y \in \mathcal{C}),\\
\dot{x} \le \dot{y} 
&\text{ whenever } x \le y \text{ in } \mathcal{P}
\quad (x,y \in \mathcal{C}).
\end{align*}

\begin{example}
Figure~\ref{fig:doubling} illustrates the doubling construction for an $(4,3)$-extremal lattice along a maximal chain. 
The highlighted chain is duplicated to form the dotted copy. 
In the doubled poset, ordinary elements may lie below dotted elements, but dotted elements are never below ordinary elements.
\end{example}

\begin{figure}[htbp!]
 \centering
\begin{tikzpicture}[baseline=20]
\draw (1.5,-0.865) node[c](bot){$x_{0}$};
\draw (0, 0) node[c](0){$x_{1}$};
\draw (1, 0) node[v](1){};
\draw (2, 0) node[v](2){};
\draw (3, 0) node[v](3){};

\draw[double] (bot)--(0);
\draw (bot)--(1);
\draw (bot)--(2);
\draw (bot)--(3);

\draw (0.5, 0.865) node[c](01){$x_{2}$};
\draw[double] (0)--(01);
\draw (1)--(01);

\draw (1.5, 0.865) node[v](12){};
\draw (1)--(12);
\draw (2)--(12);

\draw (2.5, 0.865) node[v](13){};
\draw (1)--(13);
\draw (3)--(13);

\draw (1.0, 1.730) node[c](012){$x_{3}$};
\draw[double] (01)--(012);
\draw (12)--(012);

\draw (2.0, 1.730) node[v](123){};
\draw (12)--(123);
\draw (13)--(123);

\draw (1.5, 2.595) node[c](0123){$x_{4}$};

\draw[double] (012)--(0123);
\draw (123)--(0123);
\end{tikzpicture}
\qquad \qquad \qquad
 \begin{tikzpicture}[baseline=20]
\draw (1,-0.865) node[c](bot){$x_{0}$};
\draw (0, 0) node[c](0){$x_{1}$};
\draw (1, 0) node[v](1){};
\draw (2, 0) node[v](2){};
\draw (3, 0) node[v](3){};

\draw[double] (bot)--(0);
\draw (bot)--(1);
\draw (bot)--(2);
\draw (bot)--(3);

\draw (0.5, 0.865) node[c](01){$x_{2}$};
\draw[double] (0)--(01);
\draw (1)--(01);

\draw (1.5, 0.865) node[v](12){};
\draw (1)--(12);
\draw (2)--(12);

\draw (2.5, 0.865) node[v](13){};
\draw (1)--(13);
\draw (3)--(13);

\draw (1.0, 1.730) node[c](012){$x_{3}$};
\draw[double] (01)--(012);
\draw (12)--(012);

\draw (2.0, 1.730) node[v](123){};
\draw (12)--(123);
\draw (13)--(123);

\draw (1.5, 2.595) node[c](0123){$x_{4}$};

\draw[double] (012)--(0123);
\draw (123)--(0123);

\draw (-1, 0) node[c](dbot){$\dot{x_{0}}$};
\draw (-0.5, 0.865) node[c](d0){$\dot{x_{1}}$};
\draw (0, 1.730) node[c](d01){$\dot{x_{2}}$};
\draw (0.5, 2.595) node[c](d012){$\dot{x_{3}}$};
\draw (1, 3.46) node[c](d0123){$\dot{x_{4}}$};
\draw[double] (dbot)--(d0)--(d01)--(d012)--(d0123);

\draw (bot)--(dbot);
\draw (0)--(d0);
\draw (01)--(d01);
\draw (012)--(d012);
\draw (0123)--(d0123);
\end{tikzpicture}

\caption{A $(4,3)$-extremal lattice (left) with a maximal chain highlighted in double lines, and its doubling (right) along that chain.}
\label{fig:doubling}
\end{figure}

\begin{theorem}[{\cite[Theorem 23]{AC17}}]\label{AC17 Theorem 23}
Let $\mathcal{P}$ be an $(n-1,3)$-extremal lattice with $n \geq 2$ and let $\mathcal{C}$ be a maximal chain of $\mathcal{P}$. 
Then the doubling $\mathcal{P}[\mathcal{C}]$ is an $(n,3)$-extremal lattice. 
\end{theorem}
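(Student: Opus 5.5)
This result is quoted from \cite[Theorem~23]{AC17}, so the paper can simply cite it; still, here is how I would verify it directly. Let $\mathcal{P}$ be $(n-1,3)$-extremal and $\mathcal{C}=\{x_0<x_1<\dots<x_{n-1}\}$ a maximal chain; by the lemma from \cite{AC17}, $\mathcal{P}$ is graded of length $n-1$ and its join-irreducibles are exactly its $n-1$ atoms. I would check the three defining conditions for $\mathcal{P}[\mathcal{C}]$ in turn: that it is a lattice, that it has at most $n$ join-irreducibles, that it is $B(3)$-free, and that it has the right size.

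\emph{Size and join-irreducibles.} We have $|\mathcal{P}[\mathcal{C}]|=|\mathcal{P}|+n$, and
\[
1+(n-1)+\binom{n-1}{2}+n=1+n+\binom{n}{2},
\]
so the count is correct. For the lattice structure, the join of $\dot x$ and $\dot y$ is $\dot{(x\vee y)}$. For $p\in\mathcal{P}$, the join $p\vee \dot x$ is $\dot z$, where $z$ is the least element of $\mathcal{C}$ above $p\vee x$; such a $z$ exists because $\mathcal{C}$ contains the top element. Meets are computed similarly, with the meet of $p$ and $\dot x$ equal to $p\wedge x$, taken in $\mathcal{P}$. Together with the bottom $x_0$ this makes $\mathcal{P}[\mathcal{C}]$ a finite lattice. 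The element $\dot x_0$ covers only $x_0$, so it is a new atom. Each $\dot x_i$ with $i\ge1$ equals $x_i\vee\dot x_{i-1}$, so it is not join-irreducible. Old elements keep their lower covers. Hence the join-irreducibles are the $n-1$ old atoms together with $\dot x_0$, giving exactly $n$.

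\emph{$B(3)$-freeness} is the main obstacle. Suppose $\mathcal{P}[\mathcal{C}]$ contains a copy of $B(3)$. Since dotted elements are never below ordinary ones, the dotted elements of the copy form an up-set of $B(3)$. If the copy contains no dotted element, it lies in $\mathcal{P}$, a contradiction. The dotted elements themselves lie in a chain, so the dotted part of the copy is an up-set of $B(3)$ that is also a chain; hence it is just the top element, or the top element together with a coatom. In the first case, replace the top $\dot z$ by $z$. All six middle elements are below $\dot z$, hence below $z$, and $z$ stays above them. The induced order on the copy is unchanged, because $z$ is not below any other element of the copy, so we obtain a copy of $B(3)$ inside $\mathcal{P}$, a contradiction. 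In the second case, the copy contains $\dot u<\dot z$ with $\dot u$ a coatom of the copy. Replace $\dot u$ by $u$ and $\dot z$ by $z$, where $u<z$ lie in $\mathcal{C}$. The elements below $\dot u$ in the copy are ordinary and lie below $u$. The three middle elements not below $\dot u$ are ordinary, and none of them lies below $u$ in $\mathcal{P}$: if one did, it would lie below $\dot u$ by the definition of the doubling. So the induced order is again preserved, and we get $B(3)\subseteq\mathcal{P}$, a contradiction.

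The delicate point is confirming that these replacements preserve the induced order exactly, in both directions. This holds because, in the doubling, an ordinary element $p$ satisfies $p\le\dot y$ if and only if $p\le y$.
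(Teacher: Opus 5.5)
The paper gives no proof of this statement. It is imported from \cite[Theorem~23]{AC17} and used as a black box in the converse direction of Theorem~\ref{thm:EL-vine}. Your argument is therefore a different, self-contained route, and it is correct.

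You check the lattice property, the join-irreducible count and the size directly from the definition of doubling. The key observation for $B(3)$-freeness is sound and carries the argument: the dotted elements of an embedded $B(3)$ form an up-set that is also a chain, so they are either the top alone or the top together with one coatom, and they can then be pushed down onto $\mathcal{C}$. This removes the dependence on \cite{AC17}, except for the gradedness lemma, which you still need to get $|\mathcal{C}|=n$ for the size count. The citation buys the paper brevity.

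One check is missing in the push-down step. The rule ``$p\le\dot y$ iff $p\le y$'' only controls relations of the form $p\le u$ and $p\le z$. Since $\dot u$ and $\dot z$ lie below no ordinary element, you also need that $u$ and $z$ are not below, and in particular not equal to, any remaining ordinary element of the copy. Your final remark does not cover this, but it is easy to supply:

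\begin{enumerate}
\item In the second case, label the atoms of the copy $a_1,a_2,a_3$ so that $\dot u$ is the coatom above $a_1$ and $a_2$. Then $a_1,a_2\le u$ and $a_1,a_2,a_3\le z$ in $\mathcal{P}$.
\item In $B(3)$, the only elements above both $a_1$ and $a_2$ are that coatom and the top, and these are exactly the dotted ones.
\item So $u\le p$ or $z\le p$ for an ordinary $p$ in the copy would contradict the induced order among ordinary elements, which is the order of $\mathcal{P}$.
\item The same argument, using all three atoms, handles $z$ in your first case.
\end{enumerate}

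With this added, your proof is complete.
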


\begin{theorem}[{\cite[Theorem 27]{AC17}}]\label{AC17 Theorem 27}
Let $\mathcal{P}$ be an $(n,3)$-extremal lattice with $n \geq 2$. 
Then there exist an induced subposet $\mathcal{P}_{1} \subseteq \mathcal{P}$ and a maximal chain $\mathcal{C} \subseteq \mathcal{P}_{1}$ such that $\mathcal{P} = \mathcal{P}_{1}[\mathcal{C}]$. 
\end{theorem}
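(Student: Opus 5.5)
The plan is to locate, inside $\mathcal{P}$, a principal filter that is a chain of length $n-1$ starting at an atom. Deleting it should leave an $(n-1,3)$-extremal lattice $\mathcal{P}_1$, and the deleted chain should turn out to be the dotted copy $\dot{\mathcal{C}}$ of a maximal chain $\mathcal{C}$ of $\mathcal{P}_1$. The target is suggested by Figure~\ref{fig:doubling}: in $\mathcal{P}_1[\mathcal{C}]$ the element $\dot{x_0}$ is a new atom, and its filter $\uparrow\dot{x_0}=\dot{\mathcal{C}}$ is a chain.

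\textbf{Step 1: local structure of $\mathcal{P}$.} By the lemma quoted from \cite{AC17}, $\mathcal{P}$ is graded of length $n$ and its join-irreducibles are exactly the $n$ atoms, so every element is a join of atoms. I would show that every element covers at most two elements. The idea: if $x$ covered three elements $y_1,y_2,y_3$, then choosing atoms $p_i\le y_i$ with $p_i\not\le y_j$ for $j\neq i$ should, together with $\hat 0$, the joins $p_i\vee p_j$ and $x$, produce an induced copy of $B(3)$. Making this precise needs care with the joins, but gradedness and atomisticity should suffice.

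\textbf{Step 2: rank sizes.} Write $N_r$ for the number of elements of rank $r$. Combining the bound of Step 1 with $|\mathcal{P}|=1+n+\binom n2$, I would argue that $N_r=n+1-r$ for $1\le r\le n$ and that every element of rank $\ge2$ covers exactly two elements. This mirrors Proposition~\ref{prop:card-R-vine} for regular vines. Consequently, for each $r\ge 1$, the graph on rank-$r$ elements whose edges are the rank-$(r+1)$ elements is a connected graph with one fewer edge than vertices, hence a tree.

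\textbf{Step 3: find the chain filter, delete it, and recognize a doubling.} Choose a coatom $c$ and let $Q:=\mathcal{P}\setminus{\downarrow}c$. I expect to show, by downward induction using the tree structure, that $Q$ has exactly one element of each rank $1,\dots,n$. Each non-minimal element of $Q$ covers one element of $Q$ and one element of ${\downarrow}c$, so $Q$ is a chain whose bottom is an atom $a$, and $Q={\uparrow}a$. Set $\mathcal{P}_1:={\downarrow}c$ and define $\mathcal{C}$ as follows: $\hat 0$ together with, for each $y\in Q$ of rank at least $2$, the unique element of $\mathcal{P}_1$ that $y$ covers. I then check four things:
\begin{itemize}
\item $\mathcal{C}$ is a maximal chain of $\mathcal{P}_1$ ending at $c$;
\item the order on $\mathcal{P}=\mathcal{P}_1\sqcup Q$ is exactly the doubling order, with $y\mapsto\dot{x}$;
\item $\mathcal{P}_1$ is a lattice, being a principal ideal, and is $B(3)$-free, being an induced subposet;
\item $\mathcal{P}_1$ has $n-1$ atoms and $|\mathcal{P}_1|=|\mathcal{P}|-n=1+(n-1)+\binom{n-1}{2}$, so it is $(n-1,3)$-extremal.
\end{itemize}

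\textbf{Main obstacle.} The hardest part is Steps 1--2, and specifically extracting ``each element covers at most two'' from $B(3)$-freeness and then forcing the exact rank sizes from the global count. The third point of Step 3, that $Q$ is a chain with comparabilities exactly as in the doubling, then follows the same pattern as the proof of Proposition~\ref{prop:Dvine-chain}. If Step 1 resists, my first fallback is to count pairs of atoms: show that each pair of atoms has a distinct join of rank equal to the rank at which they first merge, and that extremality forces $\binom n2$ distinct elements of rank $\ge 2$. That count would give the rank profile directly.
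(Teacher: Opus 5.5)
The paper gives no proof of this statement; it is quoted from \cite[Theorem~27]{AC17}. So your outline has to stand on its own, and as written it does not: Steps~1--2, which feed the downward induction of Step~3, are not established.

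The argument of Step~1 is false. Gradedness, atomisticity and $B(3)$-freeness do not imply that every element covers at most two elements. The diamond $M_3=\{\hat0,a,b,c,\hat1\}$ has all three properties (its join-irreducibles are exactly its three atoms), yet $\hat1$ covers $a,b,c$. Your construction collapses there because $p_i\vee p_j=\hat1$ for all $i\neq j$. The conclusion does hold for extremal lattices, but only because of the cardinality $1+n+\binom n2$, which Step~1 never uses.

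Step~2 is not justified either. The total count together with ``covers exactly two'' does not determine the numbers $N_r$, and the connectedness of the graphs $T_r$ is simply asserted. What you would need is that every $T_r$ is acyclic: then $N_{r+1}\le N_r-1$, and the count forces $N_r=n+1-r$. This is not addressed.

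Your fallback is the right lemma, and it makes Steps~1--2 unnecessary.
\begin{itemize}
\item An irredundant join $j_1\vee j_2\vee j_3$ of atoms yields an induced $B(3)$ via $S\mapsto\bigvee_{i\in S}j_i$. So every element is a join of at most two atoms, and the count then makes $\{p,q\}\mapsto p\vee q$ a bijection from pairs of atoms onto the elements of rank at least $2$.
\item Take your coatom $c$ and an atom $a\not\le c$. If $c=s\vee t$, then $\hat1=s\vee t\vee a$ must be a redundant join, and $a\not\le s\vee t$. Hence $\hat1=s\vee a$ or $\hat1=t\vee a$, so $a$ lies in the unique pair representing $\hat1$.
\item It follows that $a$ is the only atom not below $c$ (the case $n=2$ is trivial), and that $Q={\uparrow}a$. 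Each $q\in Q\setminus\{a\}$ equals $a\vee x$ for a unique atom $x\le c$, so $|Q|\le n$.
\item A maximal chain from $a$ to $\hat1$ already has $n$ elements, so $Q$ is exactly that chain.
\item Each $q\in Q\setminus\{a\}$ is join-reducible, so it has a lower cover outside $Q$, and that cover must equal $q\wedge c$. Consequently $\mathcal{C}=\{q\wedge c\mid q\in Q\}$ is a maximal chain of ${\downarrow}c$.
\item For $x\le c$ one has $x\le q$ if and only if $x\le q\wedge c$, which is exactly the doubling order.
\item Finally, $|{\downarrow}c|=|\mathcal{P}|-n$, together with your own checks in Step~3, gives $(n-1,3)$-extremality.
\end{itemize}
Neither the rank profile nor the tree structure is ever needed.
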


We will show that $(n,3)$-extremal lattices and regular vines are essentially equivalent notions. 
In this context, the realization of a regular vine as an induced subposet of a Boolean lattice is inessential; only its poset structure matters. 
Therefore, throughout this section, a regular vine will mean a poset satisfying the conditions in Definition~\ref{def:regular vine; poset}.

\begin{theorem} 
\label{thm:EL-vine}
A lattice $\mathcal{P}$ with minimum element $\hat0$ is an $(n,3)$-extremal lattice if and only if $\mathcal{P}\setminus \{\hat0\}$ is a regular vine.
\end{theorem}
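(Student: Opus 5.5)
We prove both directions by induction on $n$, using the doubling description of $(n,3)$-extremal lattices (Theorems~\ref{AC17 Theorem 23} and~\ref{AC17 Theorem 27}) on the lattice side and a chain-doubling description of regular vines on the vine side. The bridge is the following observation. Let $\mathcal{V}$ be a regular vine on $A$ and $a\in A$ a leaf of the first associated tree. Suppose further that $\mathcal{V}\setminus\mathcal{V}_{1}$, where $\mathcal{V}_1$ is the principal ideal generated by $A\setminus\{a\}$, is a chain $\{a\}=C_1\subsetneq C_2\subsetneq\cdots\subsetneq C_n=A$. Then the sets $C_{k}\setminus\{a\}$ form a maximal chain $\mathcal{C}$ of $\mathcal{V}_1\cup\{\varnothing\}$. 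Moreover, $\mathcal{V}\cup\{\varnothing\}$ is exactly the doubling $(\mathcal{V}_1\cup\{\varnothing\})[\mathcal{C}]$, with $\dot{x}=x\cup\{a\}$. This is the mechanism already used in the proof of Proposition~\ref{prop:Dvine-chain}.

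For ($\Leftarrow$), let $\mathcal{V}=\mathcal{P}\setminus\{\hat0\}$ be a regular vine with $n$ minimal elements. We may realize it inside $2^{A}$ via Proposition~\ref{prop:atom}. I first check that $\mathcal{P}$ is a lattice; this is presumably part of the hypothesis, but it also follows from gradedness and the fact that each element is determined by its set of atoms. Next I use the splitting of Lemma~\ref{lem:splitting-vine}: $A$ covers $A_1$ and $A_2$, and, as shown in the proof of Proposition~\ref{prop:Dvine-chain}, $\mathcal{V}\setminus\mathcal{V}_1$ consists of exactly one element of each rank and is a chain through $\{a_1\}$. By the bridge observation, $\mathcal{P}=(\mathcal{V}_1\cup\{\hat0\})[\mathcal{C}]$. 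By induction, $\mathcal{V}_1\cup\{\hat 0\}$ is $(n-1,3)$-extremal, so Theorem~\ref{AC17 Theorem 23} finishes this direction. Alternatively, one can count directly: $|\mathcal{P}|=1+n(n+1)/2=\binom n0+\binom n1+\binom n2$ by Proposition~\ref{prop:card-R-vine}, the join-irreducibles are the $n$ atoms, and $B(3)$-freeness follows because each element of rank at least $2$ is the join of a unique pair of atoms.

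For ($\Rightarrow$), let $\mathcal{P}$ be $(n,3)$-extremal. By Theorem~\ref{AC17 Theorem 27}, $\mathcal{P}=\mathcal{P}_1[\mathcal{C}]$. Here $\mathcal{P}_1$ has $1+n(n-1)/2$ elements, so a size and join-irreducible count shows it is $(n-1,3)$-extremal: doubling adds exactly one new atom $\dot{\hat 0}$, so $\mathcal{P}_1$ has at most $n-1$ join-irreducibles, and $\mathcal{P}_1$ is $B(3)$-free as an induced subposet. By induction, $\mathcal{V}_1\coloneqq\mathcal{P}_1\setminus\{\hat0\}$ is a regular vine. I then verify axioms (1)--(5) of Definition~\ref{def:regular vine; poset} for $\mathcal{V}=\mathcal{P}\setminus\{\hat 0\}$.

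\begin{enumerate}[(1)]
\item Gradedness of length $n-1$ holds since each new element $\dot x$ sits one rank above $x$.
\item There are $n$ minimal elements: the $n-1$ old atoms together with $\dot{\hat0}$.
\item Each new element $\dot{x}$ with $x\ne\hat0$ covers exactly $x$ and $\dot{y}$, where $y$ is its predecessor in $\mathcal{C}$. Old elements keep their two lower covers.
\item The associated tree at rank $i$ is the old tree plus one new vertex $\dot{x}$, where $x$ has rank $i-1$ in $\mathcal{C}$. It is joined by the single new edge $\dot{z}$, where $z$ is the rank-$i$ element of $\mathcal{C}$, covering $\dot x$ and $z$. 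Old elements cover only old elements, so adding a leaf preserves being a tree.
\item For proximity, the only new pairs with a common cover are $\{z,\dot x\}$ under $\dot z$, where $x\lessdot z$ in $\mathcal{C}$. These pairs have the common lower cover $x$ whenever $x\neq\hat0$, i.e.\ whenever $i\ge2$.
\end{enumerate}

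The main obstacle I expect is the bookkeeping in ($\Rightarrow$): confirming that the doubled chain produces exactly the cover relations claimed, and in particular that $\dot x$ covers nothing besides $x$ and $\dot y$. This needs the fact that in the doubling, ordinary elements lie below $\dot{x}$ only if they lie below $x$.
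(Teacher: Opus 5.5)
Your main argument is correct and is essentially the paper's proof: induction on $n$ through Theorems~\ref{AC17 Theorem 23} and~\ref{AC17 Theorem 27}, with $\mathcal{V}\setminus\mathcal{V}_1$ playing the role of $\dot{\mathcal{C}}$ and the new element of each rank being a leaf of the associated tree. Your Boolean realization makes $\{C_k\setminus\{a\}\}$ visibly a chain, where the paper appeals to proximity, and your check that $\mathcal{P}_1$ is $(n-1,3)$-extremal is care the paper omits; add only that $\mathcal{P}_1$ is a lattice, being a principal ideal of the lattice $\mathcal{P}$, which the induction hypothesis needs in both directions.

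The ``alternatively'' aside, however, is not justified as written. The right size, exactly $n$ join-irreducibles (all atoms), and the unique-pair property do not force $B(3)$-freeness. A counterexample is the closure system on $\{1,2,3,4\}$ consisting of $\varnothing$, the singletons, all pairs except $\{1,4\}$, and $\{1,2,3,4\}$: it has all these properties but contains an induced $B(3)$. So either drop the aside or argue $B(3)$-freeness from the covers-exactly-two and proximity axioms, and otherwise rely on Theorem~\ref{AC17 Theorem 23} as in your main line.
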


\begin{proof}
Let $\mathcal{V} \coloneqq \mathcal{P}\setminus\{\hat{0}\}$. 
We proceed by induction on $n$, the number of atoms of $\mathcal{P}$. 
The case $n=1$ is trivial. 
Assume that $n \geq 2$. 

First, suppose that $\mathcal{P}$ is an $(n,3)$-extremal lattice, and we show that $\mathcal{V}$ is a regular vine. 
By Theorem~\ref{AC17 Theorem 27}, we have $\mathcal{P} = \mathcal{P}_{1}[\mathcal{C}]$, where $\mathcal{P}_{1}$ is an induced subposet of $\mathcal{P}$ that is an $(n-1,3)$-extremal lattice and $\mathcal{C}$ is a maximal chain of $\mathcal{P}_{1}$. 
By the induction hypothesis, $\mathcal{V}_{1} \coloneqq \mathcal{P}_{1}\setminus\{\hat{0}\}$ is a regular vine. 

Let $\dot{x} \in \dot{\mathcal{C}}$ be a non-minimal element of $\mathcal{V}$. 
By the definition of doubling, $\dot{x}$ covers exactly two elements, namely the original element $x \in \mathcal{C}$ and the element $\dot{y} \in \dot{\mathcal{C}}$, where $y$ is the element of $\mathcal{C}$ covered by $x$. 

Next, we show that, for each $i \in \{1, \dots, n-1\}$, the graph $T_{i}$ on $\mathcal{V}(i)$ with edge set $\mathcal{V}(i+1)$ is a tree. 
Suppose that $\dot{x} \in \dot{\mathcal{C}} \cap \mathcal{V}(i)$. 
Then, by the definition of doubling,
\[
\mathcal{V}(i) = \mathcal{V}_{1}(i) \cup \{\dot{x}\},
\]
and $\dot{x}$ is a leaf of $T_{i}$. 
Since $T_{i}\setminus \dot{x}$ is a tree by the induction hypothesis, it follows that $T_{i}$ is also a tree. 

Finally, we verify the proximity condition for $\mathcal{V}$. 
Suppose that $y \in \mathcal{P}_{1}$ and $\dot{x} \in \dot{\mathcal{C}}$ are covered by a common element. 
By the definition of doubling, this common upper cover must be $\dot{y} \in \dot{\mathcal{C}}$. 
Hence, $y$ and $\dot{x}$ both cover the element $x$. 
Therefore, $\mathcal{V}$ satisfies the proximity condition, and thus $\mathcal{V}$ is a regular vine. 

Conversely, suppose that $\mathcal{V}$ is a regular vine, and we show that $\mathcal{P}$ is an $(n,3)$-extremal lattice. 
Let $\mathcal{V}_{1}$ be the principal ideal generated by an element covered by the maximal element of $\mathcal{V}$. 
Then $\mathcal{V}_{1}$ is a regular vine with $n-1$ minimal elements. 
Let
\[
\dot{\mathcal{C}} \coloneqq \mathcal{V}\setminus\mathcal{V}_{1}.
\]
By Proposition~\ref{prop:card-R-vine}, the set $\dot{\mathcal{C}}$ contains exactly one element at each rank, and hence forms a maximal chain of $\mathcal{V}$. 

Since every non-minimal element of $\dot{\mathcal{C}}$ covers exactly two elements in $\mathcal{V}$, the subset $\mathcal{C}$ of $\mathcal{P}_{1} \coloneqq \mathcal{V}_{1} \cup \{\hat{0}\}$ defined by
\begin{align*}
\mathcal{C} \coloneqq \Set{x \in \mathcal{V}_{1} \mid \text{$x$ is covered by an element of $\dot{\mathcal{C}}$}} \cup \{\hat{0}\}
\end{align*}
contains exactly one element at each rank. 
Moreover, the proximity condition of $\mathcal{V}$ implies that $\mathcal{C}$ is a maximal chain of $\mathcal{P}_{1}$. 

By the induction hypothesis, $\mathcal{P}_{1}$ is an $(n-1,3)$-extremal lattice. 
Therefore,
$
\mathcal{P} = \mathcal{P}_{1}[\mathcal{C}]
$
is an $(n,3)$-extremal lattice by Theorem~\ref{AC17 Theorem 23}. 
\end{proof}

Chornomaz \cite{Chor16} showed that the automorphism group of an $(n,3)$-extremal lattice is a subgroup of the symmetric group $S_{2}$ of degree $2$. 
Moreover, Chornomaz obtained recurrence relations for the numbers $p_{n}$ and $q_{n}$ of isomorphism classes of $(n,3)$-extremal lattices whose automorphism groups are $S_{2}$ and $\{\mathrm{id}\}$, respectively. 
In fact, the observation that the numbers of isomorphism classes of $(n,3)$-extremal lattices and regular vines coincide for the first several values played an important role in discovering the equivalence between these two objects established in Theorem~\ref{thm:EL-vine}.
We therefore obtain the following.

\begin{corollary}[{\cite[Theorem 8.1]{Chor16}}]
\label{cor:recursive}
Let $\mathsf{F}$ be a species and \(\tilde{f}_n\) the number of unlabeled \(\mathsf{F}\)-structures described in Corollary \ref{cor:key-seq}.
Then 
\[
\tilde{f}_n = p_n + q_n,
\]
where 
\[
\begin{pmatrix}
p_1\\q_1
\end{pmatrix}
=
\begin{pmatrix}
0\\1
\end{pmatrix}, \quad
\begin{pmatrix}
p_2\\q_2
\end{pmatrix}
=
\begin{pmatrix}
1\\0
\end{pmatrix},
\]
and for $n \ge 3$,
\[
\begin{pmatrix}
p_n\\q_n
\end{pmatrix}
=
\begin{pmatrix}
2^{n-3} & 2^{n-3} \\ 
2^{n-4}(2^{n-4}-1) & 2^{n-4}(2^{n-3}-1)
\end{pmatrix}
\begin{pmatrix}
p_{n-2}\\q_{n-2}
\end{pmatrix}.
\]
\end{corollary}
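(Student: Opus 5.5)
This corollary follows from Chornomaz's recurrence \cite[Theorem 8.1]{Chor16} once $\tilde f_n$ is identified with the number of isomorphism classes of $(n,3)$-extremal lattices. The plan is a chain of bijections on isomorphism classes, followed by a count of base cases.

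\textbf{Reduce to regular vines.} By Theorem~\ref{thm:AxCh}, any species $\mathsf{F}$ satisfying Definition~\ref{def:KF} with a splitting having the proximity and mergeability properties is naturally isomorphic to $\SpRV$. Natural isomorphisms commute with transport of structure, so they induce bijections between isomorphism classes on $n$-element sets. Hence $\tilde f_n$ equals the number of isomorphism classes of regular vines on $n$ elements.

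\textbf{Pass from labeled vines to abstract posets.} Here I must be careful about what ``isomorphism class'' means. In the species $\SpRV$, two vines on $A$ are isomorphic if some bijection of $A$ carries one to the other. I would check that this coincides with isomorphism of the underlying posets.
\begin{enumerate}[(a)]
\item A poset isomorphism maps minimal elements to minimal elements, so it induces a bijection $h$ of $A$.
\item By Proposition~\ref{prop:atom}, every element of rank at least $2$ is the join of a unique pair of minimal elements. Moreover, by the Boolean realization, each element $S$ is the union of the singletons below it.
\item So an element is determined by the set of minimal elements below it. Therefore the poset isomorphism agrees with the transport $\SpRV[h]$.
\end{enumerate}
Conversely, any abstract poset satisfying Definition~\ref{def:regular vine; poset} is realized inside $2^{A}$ by sending $x$ to the set of minimal elements below it. This needs a check that the map is injective and order-reflecting; I would use Proposition~\ref{prop:atom} and induction via splitting (Lemma~\ref{lem:splitting-vine}) to verify that distinct elements dominate distinct subsets. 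I expect this identification step to be the main, if mild, obstacle.

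\textbf{Apply Theorem~\ref{thm:EL-vine}.} The map $\mathcal{V}\mapsto \mathcal{V}\sqcup\{\hat0\}$ is a bijection from isomorphism classes of regular vines on $n$ elements to isomorphism classes of $(n,3)$-extremal lattices. It is inverted by deleting $\hat0$, and poset isomorphisms fix $\hat0$ and the remaining structure, so this correspondence respects isomorphism. Since Chornomaz showed that each automorphism group is either $S_2$ or trivial, the classes split into the two types counted by $p_n$ and $q_n$. This gives $\tilde f_n=p_n+q_n$, and the recurrence is then quoted directly from \cite[Theorem 8.1]{Chor16}.

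\textbf{Check the initial values.} For $n=1$, the lattice is a $2$-chain, which has trivial automorphism group, so $(p_1,q_1)=(0,1)$. For $n=2$, the lattice is $B(2)$, whose automorphism group swaps the two atoms and is $S_2$, so $(p_2,q_2)=(1,0)$. As a sanity check, both initial sums equal $1$, consistent with $\tilde f_1=\tilde f_2=1$.
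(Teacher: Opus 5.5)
Your proposal is correct and follows the paper's route: the paper obtains the corollary in one step, using Theorem~\ref{thm:AxCh} to reduce $\tilde f_n$ to unlabeled regular vines, then Theorem~\ref{thm:EL-vine} and Chornomaz's dichotomy $\mathrm{Aut}\in\{S_2,\{\mathrm{id}\}\}$ with his recurrence. Your extra care about species isomorphism versus poset isomorphism, and your check of $(p_1,q_1)$ and $(p_2,q_2)$, cover details the paper leaves implicit. The realization step you flag as the main obstacle is easiest to handle from the lattice side. The join-irreducibles of an $(n,3)$-extremal lattice are exactly its atoms, so every element is the join of the atoms below it, and $x\mapsto\{\text{atoms}\le x\}$ is therefore already an order embedding into $2^{A}$.
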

 
Similar recursive formulas for maximal ASPDs also appear in \cite[Lemmas 1 and 2]{Karpov25}.

\subsection{Extremal binary matrices without triangles}
\label{subsec:EM}
In this subsection, we discuss extremal binary matrices with no triangles arising in combinatorial matrix theory. 
We show directly that these matrices are equivalent to $(n,3)$-extremal lattices and therefore also fit naturally into the splitting-and-merging framework.

There is a natural correspondence between $2^{[n]}$ and the set of binary column vectors $\{0,1\}^{n}$. 
Thus, every non-empty subset of $2^{[n]}$ can be represented as a binary matrix. 
A \textbf{triangle} is the binary matrix
\begin{align*}
\begin{pmatrix}
1 & 1 & 0 \\
1 & 0 & 1 \\
0 & 1 & 1
\end{pmatrix}. 
\end{align*}
A binary matrix $M$ is said to have \textbf{no triangles} if no row and column permutation of $M$ contains a triangle as a submatrix. 

Anstee \cite[Corollary 2.2]{anstee1980properties-joctsa} showed that a binary matrix with $n$ rows, distinct columns, and no triangles has at most $1+n+\binom{n}{2}$ columns. 
A binary matrix achieving this upper bound is called an \textbf{extremal binary matrix with no triangles}. 

Let $\mathcal{D} \subseteq \mathcal{L}([n])$ be a maximal ASPD and define
\begin{align*}
\mathcal{P} \coloneqq \Set{\{\omega(1), \dots, \omega(k)\} \mid \omega \in \mathcal{D},\ k \in [n]} \cup \{\varnothing\}  \subseteq 2^{[n]}. 
\end{align*}
Let $M$ be the binary matrix corresponding to $\mathcal{P}$. 

\begin{theorem}[{\cite[Theorem 2]{Karpov25}}]
\label{thm:Karpov-characterize}
The matrix $M$ is an extremal binary matrix with no triangles. 
Moreover, this construction gives a bijection between maximal ASPDs and extremal binary matrices with no triangles, up to row and column permutations.
\end{theorem}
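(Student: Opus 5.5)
The plan is to route everything through the regular vine $\mathcal{V} \coloneqq \eta_{[n]}(\mathcal{D})$ of Theorem~\ref{thm:vine-ASPD}, so that $\mathcal{P} = \mathcal{V} \cup \{\varnothing\}$.

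\emph{Counting the columns.} By Proposition~\ref{prop:card-R-vine}, $|\mathcal{V}| = n(n+1)/2$. Hence $\mathcal{P}$ has exactly $1 + n(n+1)/2 = 1+n+\binom{n}{2}$ elements, and these are distinct subsets of $[n]$. So $M$ has $n$ rows and the right number of distinct columns, and it remains to rule out triangles.

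\emph{No triangles.} This step is direct from the never-bottom condition (Proposition~\ref{prop:nbc}). Suppose rows $i,j,k$ and three columns form a triangle, and set $T=\{i,j,k\}$. Then there are columns $S_1,S_2,S_3\in\mathcal{P}$ with
\[
S_1\cap T=\{i,j\},\qquad S_2\cap T=\{i,k\},\qquad S_3\cap T=\{j,k\}.
\]
Each $S_r$ has the form $\{\omega_r(1),\dots,\omega_r(m_r)\}$ for some $\omega_r\in\mathcal{D}$, that is, it is an initial segment of $\omega_r$. Consequently, in $(\omega_1)_T$ the alternative $k$ is ranked last, in $(\omega_2)_T$ the alternative $j$ is last, and in $(\omega_3)_T$ the alternative $i$ is last. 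Every element of $T$ is therefore bottom in $\mathcal{D}_T$, contradicting that $\mathcal{D}$ is an ASPD. Thus $M$ is extremal with no triangles.

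\emph{Injectivity.} The map $\mathcal{D}\mapsto M$ is injective, and compatible with relabelings, because $\mathcal{V}=\mathcal{P}\setminus\{\varnothing\}$ recovers $\mathcal{D}=\xi_{[n]}(\mathcal{V})$ by Theorem~\ref{thm:vine-ASPD}. A row permutation of $M$ corresponds to relabeling alternatives, which is handled by naturality. Column permutations do not change the underlying set family.

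\emph{Surjectivity (the main obstacle).} Let $\mathcal{F}\subseteq 2^{[n]}$ be the column family of an extremal triangle-free matrix. The goal is to show that $\mathcal{F}\setminus\{\varnothing\}$ is a regular vine, or equivalently, by Theorem~\ref{thm:EL-vine}, that $\mathcal{F}$ is an $(n,3)$-extremal lattice. Then Theorem~\ref{thm:vine-ASPD} produces a maximal ASPD whose associated matrix is $M$.

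I would proceed by induction on $n$, following the usual proof of Anstee's bound by deleting a row. Let $\mathcal{F}_0$ be the restriction of $\mathcal{F}$ to $[n-1]$, and let $\mathcal{F}_1$ be the family of traces $X\subseteq[n-1]$ such that both $X$ and $X\cup\{n\}$ lie in $\mathcal{F}$. Then
\[
|\mathcal{F}| = |\mathcal{F}_0|+|\mathcal{F}_1|.
\]
The family $\mathcal{F}_0$ is triangle-free, so $|\mathcal{F}_0|\le 1+(n-1)+\binom{n-1}{2}$. For $\mathcal{F}_1$, a pair $\{a,b\}\subseteq[n-1]$ cannot be shattered: if the traces on $\{a,b\}$ included both $\{a\}$ and $\{b\}$, then together with row $n$ (and the trace $\{a,b\}$, or a sub-case argument) one obtains a triangle on rows $\{a,b,n\}$. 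Hence $\mathcal{F}_1$ is a chain-like family of size at most $n$. The numbers add up exactly: $1+(n-1)+\binom{n-1}{2}+n = 1+n+\binom{n}{2}$.

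Equality then forces two things. First, $\mathcal{F}_0$ is extremal, so it is a vine-plus-bottom by induction. Second, $\mathcal{F}_1$ is a maximal chain $\varnothing\subsetneq C_1\subsetneq\cdots\subsetneq C_{n-1}$ in $\mathcal{F}_0$. It follows that $\mathcal{F}$ is the doubling $\mathcal{F}_0[\mathcal{C}]$, which is an $(n,3)$-extremal lattice by Theorem~\ref{AC17 Theorem 23}.

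The delicate point is the claim that $\mathcal{F}_1$ must be a chain lying inside $\mathcal{F}_0$ and maximal there. I expect this to need a careful case analysis on the possible traces on pairs $\{a,b\}$ together with row $n$, using the absence of triangles.
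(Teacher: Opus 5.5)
Your column count, your no-triangle argument and your injectivity argument are all sound. The no-triangle step takes a more direct route than the paper. The paper obtains it from $B(3)$-freeness of the $(n,3)$-extremal lattice $\mathcal{P}$ (Theorems~\ref{thm:vine-ASPD} and~\ref{thm:EL-vine}), transferred through Proposition~\ref{prop:EL-EM}. You read it off the never-bottom condition instead, so it holds for the initial-segment family of any ASPD.

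The genuine gap is in surjectivity. You claim that $\mathcal{F}_1$ never realizes both traces $\{a\}$ and $\{b\}$ on a pair, and hence is a chain. This is false when the deleted row is arbitrary. Take $n=3$ and $\mathcal{F}=2^{[3]}\setminus\{\{1,2\}\}$; this is $\mathcal{P}$ for the maximal BSPD $\{132,312,321,231\}$, so it is extremal and triangle-free. Deleting row $3$ gives $\mathcal{F}_0=2^{[2]}$ and $\mathcal{F}_1=\{\varnothing,\{1\},\{2\}\}$, which is not a chain. Nor is $\mathcal{F}$ a doubling of $\mathcal{F}_0$ along a chain in row $3$, since the trace $\{1,2\}$ occurs only as $\{1,2,3\}$. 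Your ``sub-case argument'' cannot be completed. The sets $X\cup\{n\}$ and $Y\cup\{n\}$ give traces $\{a,n\}$ and $\{b,n\}$ on $\{a,b,n\}$, but the third column of a triangle must contain $a,b$ and omit $n$, and nothing supplies one. What the triangle argument actually shows is that $\mathcal{F}_1$ shatters no pair. That gives $|\mathcal{F}_1|\le n$ (Sauer--Shelah), but no chain structure.

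The row therefore has to be chosen. For extremal $\mathcal{F}$, the family $\mathcal{F}_1$ taken with respect to row $r$ is a chain if and only if $[n]\setminus\{r\}\in\mathcal{F}$:
\begin{itemize}
\item if that set is present, it is exactly the missing third column in your argument;
\item conversely, a chain of $n$ subsets of $[n]\setminus\{r\}$ must contain $[n]\setminus\{r\}$.
\end{itemize}
So the missing ingredient is a proof that every extremal triangle-free family contains some co-singleton $[n]\setminus\{r\}$. This is where the substance of the converse lies, and the proposal gives no argument for it. It can be derived from your induction hypothesis applied to all restrictions $\mathcal{F}_0^{(r)}$ of $\mathcal{F}$ to $[n]\setminus\{r\}$, each of which is extremal by your counting, but that takes a real additional argument. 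In the paper's recovery of this theorem, the corresponding structural input is the lattice hypothesis in Proposition~\ref{prop:EL-EM}, which is what lets Theorem~\ref{thm:EL-vine} apply to the column family.

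Even with a good row, so that $\mathcal{F}_1=\mathcal{C}$ is a maximal chain $C_0\subsetneq\cdots\subsetneq C_{n-1}$, identifying $\mathcal{F}$ with $\mathcal{F}_0[\mathcal{C}]$ needs one more step. Suppose some $X\in\mathcal{F}_0\setminus\mathcal{C}$ occurred only as $X\cup\{n\}$. Take the least $i$ with $C_i\not\subseteq X$, pick $v\in C_i\setminus X$ and $u\in X\setminus C_i$. Then $X\cup\{n\}$, $C_i\cup\{n\}$ and $[n-1]$ form a triangle on $\{u,v,n\}$.
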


Note that $\mathcal{P}$ is an $(n,3)$-extremal lattice by Theorems~\ref{thm:vine-ASPD} and~\ref{thm:EL-vine}. 
We now give a direct proof that $(n,3)$-extremal lattices are equivalent to extremal binary matrices with no triangles.

\begin{proposition}
\label{prop:EL-EM}
Let $\mathcal{P} \subseteq 2^{[n]}$ be a lattice such that $\{a\} \in \mathcal{P}$ for all $a \in [n]$, and let $M$ be the corresponding binary matrix. 
Then $\mathcal{P}$ is $B(3)$-free if and only if $M$ has no triangles. 
\end{proposition}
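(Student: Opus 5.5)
The plan is to translate both conditions into statements about traces on three-element subsets of $[n]$. The columns of $M$ are the members of $\mathcal{P}$, and the rows are indexed by $[n]$. So $M$ contains a triangle, after row and column permutations, if and only if there are distinct $x,y,z\in[n]$ and sets $S_{xy},S_{xz},S_{yz}\in\mathcal{P}$ such that
\[
S_{xy}\cap\{x,y,z\}=\{x,y\},\qquad
S_{xz}\cap\{x,y,z\}=\{x,z\},\qquad
S_{yz}\cap\{x,y,z\}=\{y,z\}.
\]
I will prove both implications in their contrapositive forms, that is, as the statement that $M$ has a triangle if and only if $\mathcal{P}$ contains an induced copy of $B(3)$.

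\emph{Induced $B(3)$ gives a triangle.} Let $f$ be an order embedding of $B(3)$ onto an induced subposet of $\mathcal{P}$. Write $A_i\coloneqq f(\{i\})$ for the atoms and $C_{ij}\coloneqq f(\{i,j\})$ for the coatoms. Since the copy is induced, $A_1\not\subseteq C_{23}$, so I can pick $x\in A_1\setminus C_{23}$. In the same way, pick $y\in A_2\setminus C_{13}$ and $z\in A_3\setminus C_{12}$.

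Because $A_1,A_2\subseteq C_{12}$, we get $x,y\in C_{12}$, while $z\notin C_{12}$ by the choice of $z$. The analogous statements hold for $C_{13}$ and $C_{23}$.

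Distinctness of $x,y,z$ then follows from these memberships. For example, $x\in C_{13}$ but $y\notin C_{13}$, so $x\neq y$. Hence the columns $C_{12},C_{13},C_{23}$ together with the rows $x,y,z$ form a triangle. This direction uses neither the lattice hypothesis nor the singleton hypothesis.

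\emph{A triangle gives an induced $B(3)$.} Suppose $x,y,z$ and $S_{xy},S_{xz},S_{yz}$ are as above. The lattice structure and the singletons $\{x\},\{y\},\{z\}\in\mathcal{P}$ are used here.

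First, the minimum $\hat0$ of $\mathcal{P}$ lies below every singleton, so $\hat0\subseteq\{x\}\cap\{y\}=\varnothing$, which gives $\hat0=\varnothing$.

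Next, set $J_{xy}\coloneqq\{x\}\vee\{y\}$, with joins taken in $\mathcal{P}$. Since $S_{xy}$ is an upper bound of $\{x\}$ and $\{y\}$ in $\mathcal{P}$, the join satisfies $J_{xy}\subseteq S_{xy}$. Therefore $J_{xy}\cap\{x,y,z\}=\{x,y\}$. Define $J_{xz}$ and $J_{yz}$ similarly, and put $J\coloneqq\{x\}\vee\{y\}\vee\{z\}$, whose trace on $\{x,y,z\}$ is all of $\{x,y,z\}$.

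Now consider the eight sets $\varnothing$, $\{x\}$, $\{y\}$, $\{z\}$, $J_{xy}$, $J_{xz}$, $J_{yz}$, $J$. Their traces on $\{x,y,z\}$ are exactly the eight subsets of $\{x,y,z\}$, so the sets are distinct.

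All required inclusions hold by construction, since each singleton lies below the joins containing it and everything lies below $J$. Every required non-inclusion is witnessed by the traces: if the trace of $U$ is not contained in the trace of $V$, then $U\not\subseteq V$. For instance, $J_{xy}\not\subseteq J_{xz}$ because $y\notin J_{xz}$.

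Hence the map sending each subset $T\subseteq\{x,y,z\}$ to the corresponding set above is an isomorphism from $B(3)$ onto an induced subposet of $\mathcal{P}$.

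The only delicate point is the step $J_{xy}\subseteq S_{xy}$. It relies on $S_{xy}$ being an upper bound inside $\mathcal{P}$, which is exactly why the hypotheses that $\mathcal{P}$ is a lattice and contains all singletons are needed. Without them, joins need not exist and the singletons might not lie in $\mathcal{P}$. Once this step is in place, the rest is bookkeeping with traces.
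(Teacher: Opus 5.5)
Your proof is correct and follows the same overall plan as the paper. From a triangle you build an explicit eight-element copy of $B(3)$ on the three singletons. From an induced $B(3)$ you take one row per atom and the three coatoms as columns.

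Two differences in execution are worth noting.

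In the triangle-to-$B(3)$ direction, the paper uses the triangle's columns directly, namely $\{\varnothing,\{a_1\},\{a_2\},\{a_3\},S_1,S_2,S_3,[n]\}$. Your passage to the joins $J_{xy}\subseteq S_{xy}$ is harmless but unnecessary: your trace argument applies verbatim to $S_{xy},S_{xz},S_{yz}$ and the top of $\mathcal{P}$. So the lattice hypothesis is needed only to supply a bottom (forced to be $\varnothing$, as you show) and a top, not for the step you call delicate.

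In the converse direction, the paper first normalizes the copy to $\{\varnothing,T_i,T_i\vee T_j,[n]\}$ and then picks $a_1\in(T_1\vee T_2)\setminus T_3$, and so on. Taken literally, this rule does not force a triangle: for $\mathcal{P}=2^{[3]}$ and $T_i=\{i\}$ it permits $a_1=a_2=1$. The choice that actually works is yours, an element of an atom lying outside the opposite coatom, i.e.\ $a_k\in T_k\setminus(T_i\vee T_j)$. Working directly with the given embedded copy, as you do, also removes the need to justify that normalization.
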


\begin{proof}
Suppose that $M$ contains a triangle. 
Then there exist $S_{1}, S_{2}, S_{3} \in \mathcal{P}$ and distinct elements $a_{1}, a_{2}, a_{3} \in [n]$ such that
\begin{align*}
\{a_{1},a_{2}\} &\subseteq S_{1}, \quad a_{3} \notin S_{1}, \\
\{a_{1},a_{3}\} &\subseteq S_{2}, \quad a_{2} \notin S_{2}, \\
\{a_{2},a_{3}\} &\subseteq S_{3}, \quad a_{1} \notin S_{3}.  
\end{align*}
Then
\begin{align*}
\{\varnothing, \{a_{1}\}, \{a_{2}\}, \{a_{3}\}, S_{1}, S_{2}, S_{3}, [n]\}
\end{align*}
forms an induced  subposet  of $\mathcal{P}$ isomorphic to $B(3)$. 

Conversely, suppose that $\mathcal{P}$ contains an induced  subposet  isomorphic to $B(3)$. 
Then there exist $T_{1}, T_{2}, T_{3} \in \mathcal{P}$ such that
\begin{align*}
\{\varnothing, T_{1}, T_{2}, T_{3}, T_{1}\vee T_{2}, T_{1}\vee T_{3}, T_{2}\vee T_{3}, [n]\}
\end{align*}
forms a $B(3)$-lattice. 

Choose
\begin{align*}
a_{1} &\in (T_{1}\vee T_{2}) \setminus T_{3}, \\
a_{2} &\in (T_{1}\vee T_{3}) \setminus T_{2}, \\
a_{3} &\in (T_{2}\vee T_{3}) \setminus T_{1}.
\end{align*}
Then the submatrix of $M$ corresponding to the rows $a_{1},a_{2},a_{3}$ and the columns
\begin{align*}
T_{1}\vee T_{2}, \quad T_{1}\vee T_{3}, \quad T_{2}\vee T_{3}
\end{align*}
is a triangle. 
Thus, $M$ has a triangle.
\end{proof}

 In particular, combining Proposition~\ref{prop:EL-EM} with the equivalences between maximal ASPDs, regular vines, and $(n,3)$-extremal lattices established in Theorems~\ref{thm:vine-ASPD} and~\ref{thm:EL-vine}, we recover the characterization of maximal ASPDs via extremal binary matrices with no triangles obtained by Karpov in Theorem~\ref{thm:Karpov-characterize}.
%********************************************************************************************************

 \bibliographystyle{abbrv}
\bibliography{references} 

@misc{OEIS,
  author = {N. J. A. Sloane and The OEIS Foundation Inc.},
  title = {The On-Line Encyclopedia of Integer Sequences},
  howpublished = {\url{https://oeis.org/}},
  year = {2010}
}

@incollection{kurowicka2010counting-dm,
	author = {Morales-N{\'a}poles, O. },
	editor = {Kurowicka, D.  and Joe, H. },
	title = {Counting {Vines}},
	isbn = {978-981-4299-87-9 978-981-4299-88-6},
	doi = {10.1142/9789814299886_0009},
	booktitle = {Dependence {Modeling}},
	publisher = {WORLD SCIENTIFIC},
	month = dec,
	year = {2010},
	pages = {189--218}
}

@incollection{kurowicka2010regular-dm,
	author = {Joe, H.  and Cooke, R. M.  and Kurowicka, D. },
	editor = {Kurowicka, D.  and Joe, H. },
	title = {Regular {Vines}: {Generation} {Algorithm} and {Number} of {Equivalence} {Classes}},
	isbn = {978-981-4299-87-9 978-981-4299-88-6},
	shorttitle = {Regular {Vines}},
	doi = {10.1142/9789814299886_0010},
	booktitle = {Dependence {Modeling}},
	publisher = {WORLD SCIENTIFIC},
	month = dec,
	year = {2010},
	pages = {219--231}
}

@article{anstee1980properties-joctsa,
	author = {Anstee, R. },
	title = {Properties of (0, 1)-matrices with no triangles},
	volume = {29},
	copyright = {https://www.elsevier.com/tdm/userlicense/1.0/},
	issn = {00973165},
	doi = {10.1016/0097-3165(80)90008-4},
	number = {2},
	journal = {Journal of Combinatorial Theory, Series A},
	month = sep,
	year = {1980},
	pages = {186--198}
}

@article{Chor16,
    AUTHOR = {Chornomaz, Bogdan},
	note = {\url{https://hal.science/hal-01175633v2}},
	title = {Counting extremal lattices},
	year = {HAL preprint, 2016}}

@article {AC17,
    AUTHOR = {Albano, Alexandre and Chornomaz, Bogdan},
     TITLE = {Why concept lattices are large: {E}xtremal theory for
              generators, concepts, and {VC}-dimension},
   JOURNAL = {Int. J. Gen. Syst.},
  FJOURNAL = {International Journal of General Systems},
    VOLUME = {46},
      YEAR = {2017},
    NUMBER = {5},
     PAGES = {440--457},
      ISSN = {0308-1079,1563-5104},
   MRCLASS = {68T30 (68Q32)},
  MRNUMBER = {3687440},
       DOI = {10.1080/03081079.2017.1354798},
       URL = {https://doi.org/10.1080/03081079.2017.1354798},
}

@article {KS23,
    AUTHOR = {Karpov, Alexander and Slinko, Arkadii},
     TITLE = {Constructing large peak-pit {C}ondorcet domains},
   JOURNAL = {Theory and Decision},
  FJOURNAL = {Theory and Decision. An International Journal for
              Multidisciplinary Advances in Decision Science},
    VOLUME = {94},
      YEAR = {2023},
    NUMBER = {1},
     PAGES = {97--120},
      ISSN = {0040-5833,1573-7187},
   MRCLASS = {91B14 (91B06 91B10)},
  MRNUMBER = {4528962},
MRREVIEWER = {Hannu\ J.\ Nurmi},
       DOI = {10.1007/s11238-022-09878-9},
       URL = {https://doi.org/10.1007/s11238-022-09878-9},
}

@article{MRZ24,
    AUTHOR = {Markstr\"om, Klas and Riis, S{\o}ren and Zhou, Bei},
	note = {\url{https://arxiv.org/abs/2401.12547}},
	title = {Arrow's single peaked domains, richness, and domains for plurality and the {B}orda count},
	year = {arXiv preprint, 2024}}

@article{Slinko24,
    AUTHOR = {Slinko, Arkadii},
	note = {\url{https://arxiv.org/abs/2412.05406}},
	title = {A combinatorial representation of {A}rrow's single-peaked domains},
	year = {arXiv preprint, 2024}}

@article{Liver20,
	author = {Liversidge, Georgina},
	note = {\url{https://arxiv.org/abs/2004.00751}},
	title = {Counting Condorcet Domains},
	year = {arXiv preprint, 2020}}

@book {BLL98,
    AUTHOR = {Bergeron, F. and Labelle, G. and Leroux, P.},
     TITLE = {Combinatorial species and tree-like structures},
    SERIES = {Encyclopedia of Mathematics and its Applications},
    VOLUME = {67},
      NOTE = {Translated from the 1994 French original by Margaret Readdy,
              With a foreword by Gian-Carlo Rota},
 PUBLISHER = {Cambridge University Press, Cambridge},
      YEAR = {1998},
     PAGES = {xx+457},
      ISBN = {0-521-57323-8},
   MRCLASS = {05A15 (05C30 05E05)},
  MRNUMBER = {1629341},
MRREVIEWER = {Ira\ Gessel},
}

@article {TTT24,
    AUTHOR = {Tran, Hung Manh and Tran, Tan Nhat and Tsujie, Shuhei},
     TITLE = {Vines and {MAT}-labeled graphs},
   JOURNAL = {Forum Math. Sigma},
  FJOURNAL = {Forum of Mathematics. Sigma},
    VOLUME = {12},
      YEAR = {2024},
     PAGES = {Paper No. e128, 29},
      ISSN = {2050-5094},
   MRCLASS = {06A07 (05C78 52C35)},
  MRNUMBER = {4842405},
       DOI = {10.1017/fms.2024.124},
       URL = {https://doi.org/10.1017/fms.2024.124},
}

@article {Puppe18,
    AUTHOR = {Puppe, Clemens},
     TITLE = {The single-peaked domain revisited: {A} simple global
              characterization},
   JOURNAL = {J. Econom. Theory},
  FJOURNAL = {Journal of Economic Theory},
    VOLUME = {176},
      YEAR = {2018},
     PAGES = {55--80},
      ISSN = {0022-0531,1095-7235},
   MRCLASS = {91B08},
  MRNUMBER = {3851502},
MRREVIEWER = {Vicki\ Knoblauch},
       DOI = {10.1016/j.jet.2018.03.003},
       URL = {https://doi.org/10.1016/j.jet.2018.03.003},
}

@article {Slinko19,
    AUTHOR = {Slinko, Arkadii},
     TITLE = {Condorcet domains satisfying {A}rrow's single-peakedness},
   JOURNAL = {J. Math. Econom.},
  FJOURNAL = {Journal of Mathematical Economics},
    VOLUME = {84},
      YEAR = {2019},
     PAGES = {166--175},
      ISSN = {0304-4068,1873-1538},
   MRCLASS = {91B14},
  MRNUMBER = {3994963},
       DOI = {10.1016/j.jmateco.2019.08.001},
       URL = {https://doi.org/10.1016/j.jmateco.2019.08.001},
}

@book {A63,
    AUTHOR = {Arrow, Kenneth J.},
     TITLE = {Social {C}hoice and {I}ndividual {V}alues},
 PUBLISHER = {second edition John Wiley \& Sons, Inc., New York; Chapman \& Hall, Ltd.,
              London},
      YEAR = {1963},
   MRCLASS = {90.0X},
  MRNUMBER = {39976},
MRREVIEWER = {D.\ Gale},
}

@incollection {M09,
    AUTHOR = {Monjardet, Bernard},
     TITLE = {Acyclic domains of linear orders: {A} survey},
 BOOKTITLE = {The mathematics of preference, choice and order},
    SERIES = {Stud. Choice Welf.},
     PAGES = {139--160},
 PUBLISHER = {Springer, Berlin},
      YEAR = {2009},
      ISBN = {978-3-540-79127-0},
   MRCLASS = {91B14},
  MRNUMBER = {2648300},
       DOI = {10.1007/978-3-540-79128-7\_8},
       URL = {https://doi.org/10.1007/978-3-540-79128-7_8},
}

@article{B48,
	author = {Black, D.},
	journal = {J. Political Economy},
 	pages = {23--34},
	title = {On the rationale of group decision-making},
	volume = {56},
	    NUMBER = {1},
	year = {1948}}

@article{Karpov25,
  author = {Karpov, A.},
  journal = {Dokl. RAN. Math. Inf. Proc. Upr.},
  pages = {102--108},
  title = {Arrow's single-peaked domains},
  volume = {525},
  year = {2025},
  note = {in Russian}
}

@article{I64,
	author = {Inada, K.},
	journal = {Econometrica},
 	pages = {525--531},
	title = {A note on the simple majority decision rule},
	volume = {32},
	year = {1964}}

@article{ER94,
    AUTHOR = {Edelman, Paul H. and Reiner, Victor},
     TITLE = {Free hyperplane arrangements between {$A_{n-1}$} and {$B_n$}},
   JOURNAL = {Math. Z.},
  FJOURNAL = {Mathematische Zeitschrift},
    VOLUME = {215},
      YEAR = {1994},
    NUMBER = {3},
     PAGES = {347--365},
      ISSN = {0025-5874,1432-1823},
   MRCLASS = {52B30},
  MRNUMBER = {1262522},
MRREVIEWER = {P.\ Orlik},
       DOI = {10.1007/BF02571719},
       URL = {https://doi.org/10.1007/BF02571719},
}

@article{ST06,
    AUTHOR = {Sommers, Eric and Tymoczko, Julianna},
     TITLE = {Exponents for {$B$}-stable ideals},
   JOURNAL = {Trans. Amer. Math. Soc.},
  FJOURNAL = {Transactions of the American Mathematical Society},
    VOLUME = {358},
      YEAR = {2006},
    NUMBER = {8},
     PAGES = {3493--3509},
      ISSN = {0002-9947,1088-6850},
   MRCLASS = {17B20 (05E15 14M15 20G05)},
  MRNUMBER = {2218986},
MRREVIEWER = {James\ E.\ Humphreys},
       DOI = {10.1090/S0002-9947-06-04080-3},
       URL = {https://doi.org/10.1090/S0002-9947-06-04080-3},
}

@article {CM20,
    AUTHOR = {Cuntz, Michael and M\"{u}cksch, Paul},
     TITLE = {M{AT}-free reflection arrangements},
   JOURNAL = {Electron. J. Combin.},
  FJOURNAL = {Electronic Journal of Combinatorics},
    VOLUME = {27},
      YEAR = {2020},
    NUMBER = {1},
     PAGES = {Paper No. 1.28, 19},
      ISSN = {1077-8926},
   MRCLASS = {52C35 (05B35 20F55 32S22 51F15)},
  MRNUMBER = {4061070},
MRREVIEWER = {Clement\ Radu\ Popescu},
       DOI = {10.37236/8820},
       URL = {https://doi.org/10.37236/8820},
}

@article{ABCHT16,
    AUTHOR = {Abe, Takuro and Barakat, Mohamed and Cuntz, Michael and Hoge,
              Torsten and Terao, Hiroaki},
     TITLE = {The freeness of ideal subarrangements of {W}eyl arrangements},
   JOURNAL = {J. Eur. Math. Soc. (JEMS)},
  FJOURNAL = {Journal of the European Mathematical Society (JEMS)},
    VOLUME = {18},
      YEAR = {2016},
    NUMBER = {6},
     PAGES = {1339--1348},
      ISSN = {1435-9855,1435-9863},
   MRCLASS = {52C35 (05E15 14N20 17B22)},
  MRNUMBER = {3500838},
MRREVIEWER = {Eric\ N.\ Sommers},
       DOI = {10.4171/JEMS/615},
       URL = {https://doi.org/10.4171/JEMS/615},
}

@article{T80,
    AUTHOR = {Terao, Hiroaki},
     TITLE = {Arrangements of hyperplanes and their freeness. {I}},
   JOURNAL = {J. Fac. Sci. Univ. Tokyo Sect. IA Math.},
  FJOURNAL = {Journal of the Faculty of Science. University of Tokyo.
              Section IA. Mathematics},
    VOLUME = {27},
      YEAR = {1980},
    NUMBER = {2},
     PAGES = {293--312},
      ISSN = {0040-8980},
   MRCLASS = {32C40 (05B25 51F15 52A37)},
  MRNUMBER = {586451},
MRREVIEWER = {P.\ Orlik},
}

@book {OT92,
    AUTHOR = {Orlik, Peter and Terao, Hiroaki},
     TITLE = {Arrangements of hyperplanes},
    SERIES = {Grundlehren der mathematischen Wissenschaften [Fundamental
              Principles of Mathematical Sciences]},
    VOLUME = {300},
 PUBLISHER = {Springer-Verlag, Berlin},
      YEAR = {1992},
     PAGES = {xviii+325},
      ISBN = {3-540-55259-6},
   MRCLASS = {52B30 (14F35 20F36 20F55 32S25 57N65)},
  MRNUMBER = {1217488},
MRREVIEWER = {Michel\ Yves\ Jambu},
       DOI = {10.1007/978-3-662-02772-1},
       URL = {https://doi.org/10.1007/978-3-662-02772-1},
}

@book {KJ11,
     TITLE = {Dependence modeling},
    EDITOR = {Kurowicka, Dorota and Joe, Harry},
      NOTE = {Vine copula handbook},
 PUBLISHER = {World Scientific Publishing Co. Pte. Ltd., Hackensack, NJ},
      YEAR = {2011},
     PAGES = {viii+360},
      ISBN = {978-981-4299-87-9; 981-4299-87-1},
   MRCLASS = {62-06 (62H05 62H20)},
  MRNUMBER = {2849701},
}

@article{TT23,
    AUTHOR = {Tran, Tan N. and Tsujie, Shuhei},
     TITLE = {M{AT}-free graphic arrangements and a characterization of
              strongly chordal graphs by edge-labeling},
   JOURNAL = {Algebr. Comb.},
  FJOURNAL = {Algebraic Combinatorics},
    VOLUME = {6},
      YEAR = {2023},
    NUMBER = {6},
     PAGES = {1447--1467},
      ISSN = {2589-5486},
   MRCLASS = {05C62 (05C75 05C78 52C35)},
  MRNUMBER = {4686122},
       DOI = {10.5802/alco.319},
       URL = {https://doi.org/10.5802/alco.319},
}

@article {ZK22,
    AUTHOR = {Zhu, Kailun and Kurowicka, Dorota},
     TITLE = {Regular vines with strongly chordal pattern of (conditional)
              independence},
   JOURNAL = {Comput. Statist. Data Anal.},
  FJOURNAL = {Computational Statistics \& Data Analysis},
    VOLUME = {172},
      YEAR = {2022},
     PAGES = {Paper No. 107461, 24},
      ISSN = {0167-9473,1872-7352},
   MRCLASS = {99-01},
  MRNUMBER = {4401907},
       DOI = {10.1016/j.csda.2022.107461},
       URL = {https://doi.org/10.1016/j.csda.2022.107461},
}

@article {CKW15,
    AUTHOR = {Cooke, R. M. and Kurowicka, D. and Wilson, K.},
     TITLE = {Sampling, conditionalizing, counting, merging, searching
              regular vines},
   JOURNAL = {J. Multivariate Anal.},
  FJOURNAL = {Journal of Multivariate Analysis},
    VOLUME = {138},
      YEAR = {2015},
     PAGES = {4--18},
      ISSN = {0047-259X,1095-7243},
   MRCLASS = {62H15 (62H05 62H10 62P05)},
  MRNUMBER = {3348830},
       DOI = {10.1016/j.jmva.2015.02.001},
       URL = {https://doi.org/10.1016/j.jmva.2015.02.001},
}

@article {BC02,
    AUTHOR = {Bedford, Tim and Cooke, Roger M.},
     TITLE = {Vines---a new graphical model for dependent random variables},
   JOURNAL = {Ann. Statist.},
  FJOURNAL = {The Annals of Statistics},
    VOLUME = {30},
      YEAR = {2002},
    NUMBER = {4},
     PAGES = {1031--1068},
      ISSN = {0090-5364,2168-8966},
   MRCLASS = {62E10 (60E05 62H20)},
  MRNUMBER = {1926167},
MRREVIEWER = {Friedrich\ Liese},
       DOI = {10.1214/aos/1031689016},
       URL = {https://doi.org/10.1214/aos/1031689016},
}

@article {KC06,
    AUTHOR = {Kurowicka, D. and Cooke, R. M.},
     TITLE = {Completion problem with partial correlation vines},
   JOURNAL = {Linear Algebra Appl.},
  FJOURNAL = {Linear Algebra and its Applications},
    VOLUME = {418},
      YEAR = {2006},
    NUMBER = {1},
     PAGES = {188--200},
      ISSN = {0024-3795,1873-1856},
   MRCLASS = {62H20 (15A99)},
  MRNUMBER = {2257589},
MRREVIEWER = {Tamas\ Lengyel},
       DOI = {10.1016/j.laa.2006.01.031},
       URL = {https://doi.org/10.1016/j.laa.2006.01.031},
}
   %********************************************************************************************************

\makeatletter
\@setaddresses
\global\let\@setaddresses\relax
\makeatother

%\end{document}
\newpage

\section*{Appendix: Catalog}

We give here a catalog of MAT-labeled complete graphs, regular vines, and maximal ASPDs of order $3\le n\le 6$ under the correspondences in Theorems~\ref{thm:graph-vine}, \ref{thm:graph-ASPD}, and~\ref{thm:vine-ASPD}.

\vskip 3em
 
\input{catalogdata3-5.txt}

\newpage
\input{catalogdata6.txt}

\end{document}